\def\ps@headings{%
\def\@oddhead{\mbox{}\scriptsize\rightmark \hfil \thepage}%
\def\@evenhead{\scriptsize\thepage \hfil \leftmark\mbox{}}%
\def\@oddfoot{}%
\def\@evenfoot{}}
\newtheorem{lemma}{Lemma}
\newtheorem{definition}{Definition}
\newtheorem{proposition}{Proposition}
\newtheorem{assumption}{Assumption}
\newtheorem{remark}{Remark}
\newtheorem{corollary}{Corollary}
\newtheorem{theorem}{Theorem}
\DeclareMathOperator*{\argmin}{arg\,min}
\DeclareMathOperator*{\argmax}{arg\,max}
\begin{document}
\title{Asymptotically optimal pilot allocation over Markovian fading channels\thanks{This work has been partly funded by Huawei Technologies France SASU. A shorter version of this paper was published in the proceedings of IEEE ITW 2016, \cite{LADP16}.}}

\author{\IEEEauthorblockN{Maialen Larra\~naga\IEEEauthorrefmark{1}, 
Mohamad Assaad\IEEEauthorrefmark{1}, 
Apostolos Destounis\IEEEauthorrefmark{2},
Georgios S. Paschos \IEEEauthorrefmark{2} \\
 }
 \IEEEauthorblockA{\IEEEauthorrefmark{1}
Laboratoire des Signaux et Systemes (L2S, CNRS),  CentraleSupelec, Gif-sur-Yvette, France.\\
\IEEEauthorrefmark{2}Huawei Technologies \& Co., Mathematical and Algorithmic Sciences Lab, Boulogne Billancourt, France.}}

\maketitle

\begin{abstract}
%We investigate a pilot allocation problem in wireless networks with Markovian fading channels.  Transmission to all active users in the system is based on the Channel State Information (CSI) at the Base Station (BS). The BS chooses which users to allocate pilots to and transmits data having perfect CSI of users that have been allocated with a pilot, and with imperfect CSI for users that did not receive any pilot. Therefore,  a trade-off emerges between exploring users with up-to-date CSI for immediate gains  or, exploit users with outdated CSI for a potential larger future gain. This is a challenging problem and obtaining a throughput optimal pilot allocation algorithm is out of reach. In this paper, we use a Lagrangian relaxation approach to obtain a low-complexity heuristic, the Whittle index policy, which shows remarkably good performance.

We investigate a pilot allocation problem in wireless networks over Markovian fading channels. In wireless systems,  the Channel State Information (CSI)  is collected at the Base Station (BS),
in particular, this paper considers
% through either a feedback channel (FDD mode) or 
a pilot-aided channel estimation method (TDD mode). % This paper focuses on the latter. 
Typically, there are less available pilots than users, hence at each slot the scheduler needs to decide an allocation of pilots to users with the goal of maximizing the long-term average throughput. %A trade-off emerges between exploiting users with up-to-date CSI for immediate gains or, exploring users with outdated CSI for a potential larger future gain. 
There is an inherent tradeoff in how the limited pilots are used: assign a pilot to a user with up-to-date CSI and good channel condition for \emph{exploitation}, or assign a pilot to a user with outdated CSI for \emph{exploration}.
As we show, the arising pilot allocation problem is a restless bandit problem and thus its optimal solution is out of reach. In this paper, we propose an approximation that, through the Lagrangian relaxation approach, provides a low-complexity heuristic, the Whittle index policy. We prove this policy to be asymptotically optimal in the \emph{many users} regime (when the number of users in the system and the available pilots for channel sensing grow large). We evaluate the performance of Whittle's index policy in various scenarios and illustrate its remarkably good performance. 
\end{abstract}
%\keywords{Partially Observable Markov Decision Processes, Restless bandits, Whittle's index, asymptotic optimality, pilot allocation}
\section{Introduction}

In order to support applications with large data traffic rates in the downlink, future generations of communication networks will support technologies such as multiple input multiple output (MIMO) possibly with massive antenna installations, e.g., \cite{Marzetta}. The performance of these techniques critically depends on acquiring accurate channel state information (CSI) at the transmitter, which is then used to encode the transmitting signals and null the interference at the receivers \cite{Marzetta}. 

In practice wireless channels are highly volatile, and CSI needs to be acquired very frequently. Furthermore, in both FDD (Frequency Division Duplex) and TDD (Time Division Duplex) systems only a minority of the users can be selected to provide CSI to the base station at each given time, since the resources used for CSI acquisition reduce the system efficiency. In this paper, we focus on pilot-aided CSI acquisition proposed for TDD systems. However, we mention that our framework can be applied directly to the CSI feedback context (i.e. FDD) as well.

For TDD systems downlink CSI is inferred by the uplink training symbols and the use of the reciprocity property of the channel; the process is as follows.  The BS  allocates the $M$ available pilot sequences to $M$ users out of the total $N$ users in the system. The chosen users  transmit the training  symbols to the BS which provides uplink CSI information. Last, the base station estimates  the downlink CSI exploiting the channel reciprocity. For the estimation to be successful, $M$ needs to be small to avoid the pilot contamination issue. Hence in systems with a large number of users it is expected that $M<N$. 

It has been observed that once a channel is measured and its CSI is acquired, the channel coefficients remain the same for some period of time termed \emph{channel coherence time}. In fact, sophisticated transmission schemes can exploit this channel property to avoid requesting CSI constantly. 

The problem under study in this paper, is to exploit the channel memory to optimize the allocation of pilots for CSI acquisition. To model the channel memory we consider channels that evolve according to a Markovian stochastic process and we study the pilot allocation problem over these channels. Markovian modeling of the wireless channel is commonly used in the literature to incorporate memory, e.g., to model the shadowing phenomenon, \cite{LiuZhao10}, \cite{OuyangMuruEryilShroff15}, \cite{koole2001optimal}, and \cite{cecchi2016nearly}. 

 The pilot allocation problem introduced above, with channels evolving in a Markovian fashion, can be formulated as a restless bandit problem  (RBP). RBPs are a generalization of multi-armed bandit problems (MABPs) \cite{GGW11}, sequential decision-making problems that can be seen as a particular case of Markov decision processes (MDPs). In a MABP, at each decision epoch, a scheduler  chooses which bandit\footnote{The notion of the bandit historically refers to a slot machine with an unknown reward distribution.}  to play, and a reward is obtained accordingly.  The objective is to design a bandit selection policy that maximizes the average expected reward. In MABPs the bandits that have not been played remain at the same state and provide no reward. Gittins \cite{GGW11} proved that the optimal solution of a MABPs is characterized by a simple index, known today as Gittins’ index. In the more general framework of RBPs, the statistics of all bandits evolve even in slots that are not chosen, hence the term restless. As a result, obtaining an optimal solution is typically out of reach. In \cite{Whi88}, Whittle, based on the Lagrangian relaxation approach, proposed a scheduling algorithm, the so-called Whittle's index policy, as a heuristic for solving RBPs. This has been the approach considered in this paper. %We derive Whittle’s index policy for the above mentioned problem and we numerically observe its remarkably good performance.

Previous papers that are related to our work (\cite{LiuZhao10},  \cite{OuyangMuruEryilShroff15}, \cite{koole2001optimal}, \cite{JackoVillar12}, and \cite{LiuZhaoKrish}) study the Gilbert-Elliot channel model, the simplest Markovian channel model having two states, where the channel  is either in a GOOD or in a BAD state. The limitation of such binary models is that they fail to capture the complex nature of the wireless channel. Instead, here we consider a multi-dimensional Markov process, %which allows to model channels with multiple states corresponding to how the different 
where each dimension corresponds to a different channel quality level representing the modulation and coding techniques used in practice to interact with the wireless channels. %to yield different level of rewards (=number of successfully transmitted bits). 
Thus, we have considered here a more challenging problem where channels are modeled by $K-$state Markov Chains, with $K$  arbitrarily large. This represents a generalization of prior binary Markovian models.

The pilot allocation problem over Markovian channels with $K>2$, can be cast as a Partially Observable Markov Decision Process (POMDP), and is an extremely challenging problem. Even the Lagrangian relaxation technique, which yields a simple index type of policy (i.e., Whittle's index policy), turns out to be very difficult to solve. One of the reasons for that is that, proving structural properties, such as threshold type of policies (the more outdated the CSI the more attractive it becomes to allocate a pilot), for an optimal POMDP allocation policy is, to the best of our knowledge, an unsolved problem, see Albright et al. \cite{albright1979structural} and Lovejoy~\cite{lovejoy1987some}. Moreover,  Cecchi et al. \cite{cecchi2016nearly} show for a similar downlink scheduling problem that threshold policies are not necessarily optimal for $K>2$. To overcome this difficulty we develop an approximation. The latter simplifies the analysis, allowing the Lagrangian relaxation technique to be applied. 

The objective of this paper is therefore to provide well performing policies for the notoriously difficult problem of pilot allocation over channels that follow Markovian laws. The main contributions of the paper are the following.
\begin{itemize}
\item We develop an approximation of the POMDP introduced above. We apply the Lagrangian relaxation technique and prove the optimality of threshold type of policies for the relaxed problem. 
\item We prove the indexability property (required for the existence of Whittle's index) and we obtain an explicit expression for Whittle's index. 
\item We derive a simple suboptimal policy for the approximation based on Whittle's index, i.e., Whittle's index policy ($WIP$). This is to the best of our knowledge the first work that provides  an explicit index for $K-$state Markov Chain channels for arbitrary $K$.   
\item We prove $WIP$ for the approximation to be asymptotically optimal in the many users setting (i.e., as the number of users and the number of available pilots grow large). The latter is an extension of the optimality results derived in \cite{OuyangEryilShroff12} for a downlink scheduling problem with Gilbert-Elliot wireless channels. 
%\item We derive explicit bounds for the performance of the approximation.
\end{itemize}

The remainder of the paper is organized as follows. In Section~\ref{sec:description} we describe the wireless downlink scheduling problem that has been considered. In Section~\ref{sec:relaxation} we introduce an approximation  that can be solved using a Lagrangian relaxation approach. We derive a closed-form expression for the Whittle index and we define a heuristic for the original problem based on this index.  In Section~\ref{sec:error} we obtain a bound on the error introduced by the approximation. The latter serves as performance measure. In Section~\ref{asymptotic} we prove $WIP$ to be  asymptotically optimal in the many users setting. Finally, in Section~\ref{sec:numerical} we evaluate the performance of Whittle's index policy, comparing it to the performance of a myopic policy and a randomized policy, and we observe that $WIP$ captures closely the structure of the optimal policy. Most of the proofs can be found in the Appendix.
\section{Model Description}\label{sec:description}
We consider a wireless downlink scheduling problem with a single base station (BS) and $N$ users.  The channel between a user and the BS is modeled as a $K$-state Markov chain. Time is  slotted and users are synchronized.
We denote by $X_n(t)$ the channel state of user $n$ at time slot~$t$. Then $X_n(t)\in\{h_1, h_2, \ldots, h_K\}$. The state of the channel remains the same during a time slot and evolves according to the probability transition matrix $P_n=(p_{n,ij})_{i,j\in\{1,\ldots,K\}},$
%\begin{align}\label{eq:prob_trans_matrix}
%P_n=
%\begin{bmatrix}
%p_{n,11} & p_{n,12} & \ldots & p_{n,1K}\\
%p_{n,21} & p_{n,22} & \ldots & p_{n,2K}\\
%\vdots & \vdots & \ddots & \vdots \\
%p_{n,K1} & p_{n,K2} & \ldots & p_{n,KK}\\
%\end{bmatrix},
%\end{align}
where 
$p_{n,ij}=\mathbb{P}(X_n(t+1)=h_j|X_n(t)=h_i).$ 
 Channels are assumed to be independent and non-identical across users, i.e., two different users may have different probability transition matrices. The BS can not directly observe the states of the channels in the beginning of each time slot. However, this information can be acquired using pilot sequences for channel sensing. The objective is therefore to find an optimal pilot allocation policy.

%In systems where perfect Channel State Information (CSI) is available to the base station, this information can be exploited to obtain higher data transmission rates. Usually, perfect CSI is not available and the resources have to be used for channel sensing.
We adopt the following scheduling model.
    We assume  $M$ different pilot sequences to be available to the BS for channel sensing.  In the beginning of each time slot, the BS chooses $M$ users out of $N$ (typically, $M<N$). The selected users use the allocated pilots to send the uplink training symbols.  After the training phase, the BS transmits data to all users in the system (selected for pilot allocation or not).
This mechanism allows the BS to have perfect CSI during downlink data transmission of the  selected users.   Users that have not been selected cannot provide their current CSI. Instead, the BS infers their channel state from past observations (the deduction of the belief state is explained below). %Under Markovian channel models, the update of the belief channel state has an impact on the future decisions and on the expected average rewards.  
We highlight that the results in this paper can easily be adapted for different problems such as,  downlink scheduling with ARQ feedback or scheduling in radio cognitive networks.
%and transmits data based on it (the belief state update is explained in Section~\ref{subsec:belief}). The throughput obtained from this set of users is  hence smaller.  %This scheduling procedure is motivated by massive MIMO type of technologies, where a large number of antennas at the BS enables simultaneous transmission to all users in the system.
%\begin{figure}
%\centering
%\includegraphics[scale=0.7]{Figures/Scheduling_mechanism3}
%\caption{Opportunistic scheduling with pilot-aided estimation and precoding vector selection}\label{fig:scheduling_mechanism}
%\end{figure}
%This scheduling structure may be suitable for various wireless systems; radio cognitive networks, multiple-input multiple-output systems, multi-access channel models etc.

%%%%%%%%%
%%%%%%%%%
%\subsection{Belief state evolution}}\label{subsec:belief}
%In the scheduling problem introduced above, the BS decides a pilot allocation strategy based  on the belief channel state of each user. Next we explain how the belief state evolves according to the channel probability transition matrices $P_n$ for all $n$.

Next we explain the belief channel state update for the pilot allocation problem introduced above.
 Let us define $\vec b_{n}^\phi(t)$ the belief state of user $n$ during the $t^{\text{th}}$ time slot under policy $\phi$. The element $b_{n,j}^\phi(t)$ is the probability that user $n$ is in state $h_j$ in slot $t$ given all the past channel state information.
Let us denote by $a_n^\phi(\vec b_1^\phi(t),\ldots,\vec b_N^\phi(t))\in\{0,1\}$, the decision of the BS with respect to user $n$, and define for ease of notation $a_n^\phi(t):=a_n^\phi(\vec b_1^\phi(t),\ldots,\vec b_N^\phi(t))$, where $a_n^\phi(\cdot)=0$ if no pilot has been allocated to user $n$, and $a_n^\phi(\cdot)=1$ if a pilot has been allocated to user $n$ in slot $t$.
%when the belief state of user $i$ for all $i\in\{1,\ldots,K\}$  is given by $\vec b_i^\phi(t)$.  
  Since at most $M$ pilots can be allocated we have
$$\sum_{n=1}^Na_n^\phi(t)\leq M.$$
 Let us denote by $S^\phi(t)=\{n\in\{1,\ldots,N\}:a_n^\phi(t)=1\}$ the set of users that have been selected in time slot $t$ under policy $\phi$.  We then define 
\begin{align*}
&\vec b_{n}^\phi(t+1):=\begin{cases}
\vec b_n^\phi(t)P_n &\text{if } n\notin S^\phi(t),\\
\vec \pi_{n,j}^1 & \text{if } n\in S^\phi(t), X_n(t)=h_j, 
\end{cases} 
\end{align*}
to be the evolution of the belief states. In the latter equation $\vec \pi_{n,j}^1=(p_{n,j1},\ldots,p_{n,jK})$ and $\vec b_n^\phi(t)$ take values in the countable state space 
$$\Pi_n=\{\vec\pi_{n,j}^{\tau}:\vec\pi_{n,j}^{\tau}=\vec e_jP_n^{\tau},\tau\in\mathbb{N}, \hbox{ and } j\in\{1,\ldots,K\}\},$$ where $\vec e_j$ is the vector with all entries 0 except the $j^{th}$ entry which equals 1. We will use the notation $\vec\pi_{n,j}^{\tau}=(p_{n,j1}^{(\tau)},\ldots,p_{n,jK}^{(\tau)})$ throughout the paper, where obviously $p_{n,ji}^{(1)}=p_{n,ji}$ for all $n,i,j$. Belief state $\vec b_n^\phi(t)=\vec\pi_{n,j}^{\tau}$ implies that  user $n$ has last been selected in slot $t-\tau$ and the observed channel state has been~$h_j$.
We note that $\vec b_n^\phi(t)$ is a sufficient statistic for the scheduling decisions and channel state information in the past, see the proof  in Smallwood et al.~\cite{SmallSondik1973}. The scheduling and the belief state updates procedure are depicted in Figure~\ref{fig:scheduling_mechanism}.
\begin{figure}
\centering
\includegraphics[scale=0.7]{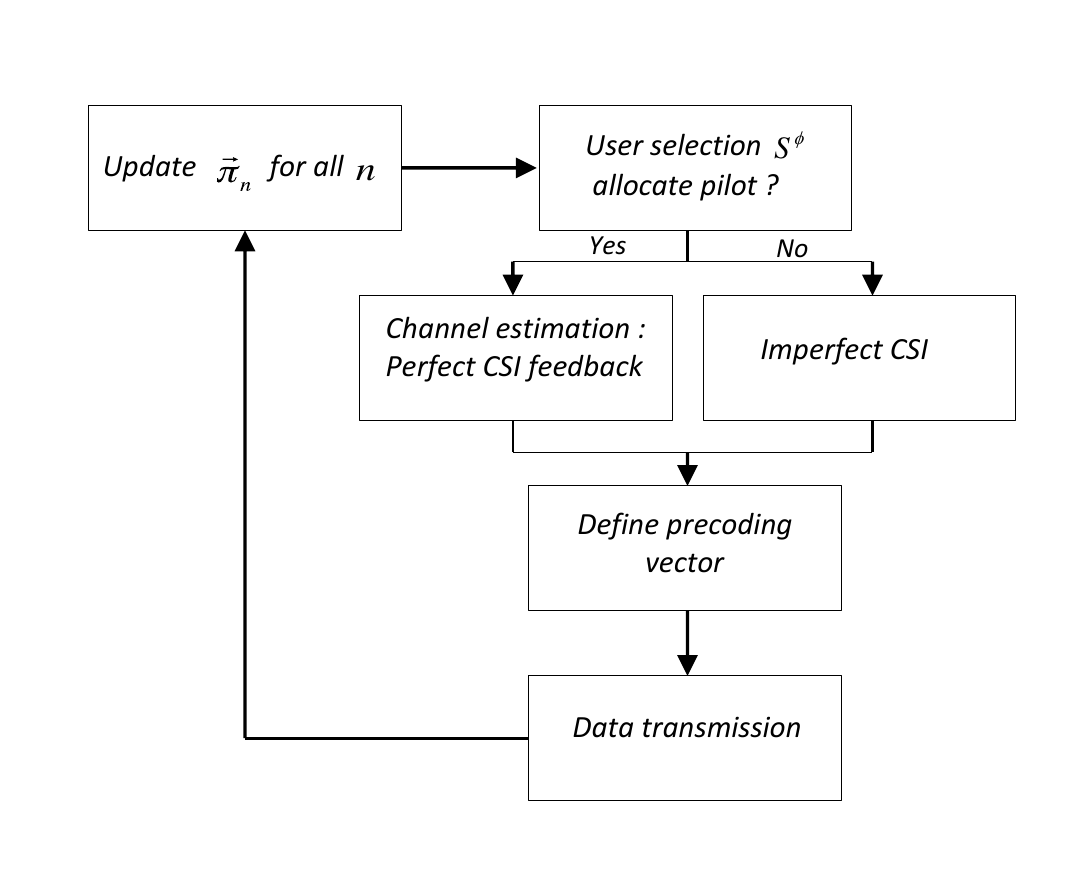}
\caption{Scheduling with pilot-aided estimation.}\label{fig:scheduling_mechanism}
\end{figure}
%Let us now define  $\sigma_n(t):=\argmax_j\vec\pi_{n,j}(t)$ and define
%\begin{align*}
%b_n^\phi(t):=h_{\sigma_n^\phi(t)},
%\end{align*}
%then $b_n^\phi(t)$ represents the highest probability channel state for user $n$ under scheduling policy $\phi$. 

Next we make an assumption on $\vec\pi_{n,j}^{\tau}$ and we provide a sufficient condition for this assumption to hold.
\begin{assumption}[A1]\label{assump:belief} Let $P_n=(p_{n,ij})_{i,j\in\{1,\ldots,K\}},$ and
 $\vec\pi_{n,j}^{\tau}$ and $\vec\pi_{n,j}^{\tau'}\in \Pi_n$. We assume that, if $\tau\leq \tau'$, then
$\max_i p_{n,ji}^{(\tau)}\geq\max_i p_{n,ji}^{(\tau')},$ for all $j$.

\end{assumption}
\begin{remark}
If $P_n$ is doubly stochastic then Assumption~\ref{assump:belief} holds.
\end{remark}
%Assumption~\ref{assump:belief} imposes a natural property on the transition probability matrix, since it implies that the information on the belief state deteriorates every slot that the user has not been selected.
Note that if the Markov chain is irreducible, and $P_n$ doubly stochastic, the belief channel vector approaches the uniform distribution as $\tau$ increases.

\subsection{Throughput maximization problem}
The objective of the present work is to efficiently allocate the available pilots to the users in the system in order to maximize the {\it long-run expected average throughput}. That is, find $\phi$ such that
\begin{align}\label{eq:obj_func_average}
\liminf_{T\to\infty}\frac{1}{T}\mathbb{E}\left(\sum_{n=1}^N\sum_{t=1}^T R_n( X_n(t),\vec b_n^\phi(t),a^\phi_n(t))\right),
\end{align}
is maximized, where $R_n(X_n(t),\vec b_n^\phi(t),a^\phi_n(t))$ is the throughput obtained by user $n$ in channel $X_n(t)$, belief vector $\vec b_n^\phi(t)$ and action $a^\phi_n(t)$. We have assumed that if a pilot has been allocated to a user, then the BS obtains full CSI of that particular user before transmitting the data. Therefore, the reward that corresponds to that user, accrued at the end of the time slot, is independent of the belief state (since the actual channel state $X_n(t)$ is revealed in the training phase). For that reason, we define $R_n(h,1):=R_n(h,\vec\pi^\tau_{n,j},1)$ to be the immediate reward obtained by user $n$ in channel state $h\in\{h_1,\ldots,h_K\}$. This is not the case for the users to whom a pilot has not been allocated. The channel state of a non-selected user is unknown even after the training phase and therefore, the reward, accrued at the end of the time slot, depends on the mismatch between the belief channel state and the real channel state. We make the following natural assumption on the  reward for not selected users, which is motivated by A1.
\begin{assumption}[A2] Let $R^1_n$ and $R_n(\vec\pi_j^\tau,0)$ be the \emph{average immediate rewards} of user $n$ under active and  passive actions, respectively. Let $R^1_n<\infty$. Then, we assume $R_n^1\geq R_n(\vec\pi_{n,j}^\tau,0)\geq R_n(\vec\pi_{n,j}^{\tau'},0),$
for all $\tau'\geq\tau$.
\end{assumption}
 The latter implies that the more outdated the CSI of a user is, the less the average reward accrued by that user will be. A trade-off emerges between exploiting users with up-to-date CSI, which provide high immediate rewards, and exploring users with outdated CSI, with potentially higher future rewards.

Although in this paper we are interested in maximizing the throughput, we note that the reward function $R_n(\cdot,\cdot,\cdot)$ could represent any function of the actual channel state and belief channel state of user $n$, and the action (allocate a pilot or not) taken on user $n$. The results provided in this paper hold for any function $R$ that satisfies Assumption A2.

%The exploration vs. exploitation trade-off in this model is as follows, on one hand the system exploits the users with lowest belief state probabilities, so that the highest belief state probabilities maximize the immediate reward. On the other hand the system explores the users for higher probability rewards in the future. 

While~\eqref{eq:obj_func_average} is a typical performance measure, it is not obvious at all how to deal with it.  In many existing works, e.g., \cite{LiuZhao10}, a discounted reward function is used.  In this work, we deal with ~\eqref{eq:obj_func_average} as follows. We first  consider the
 {\it discounted reward over the infinite horizon:} find $\phi$ such that
\begin{align}\label{eq:obj_func_discounted}
\liminf_{T\to\infty}\frac{1}{\sum_{t=1}^T\beta^{t-1}}\mathbb{E}\left(\sum_{n=1}^N\sum_{t=1}^T\beta^{t-1}R_n(X_n(t),\vec b_n^\phi(t),a^\phi_n(t))\right),
\end{align}
is maximized,
with $0\leq\beta<1$ the discount factor. We then retrieve the solution of~\eqref{eq:obj_func_average} as a limit of the discounted reward model (i.e., letting the discount factor $\beta\to 1$). This limit is not straightforward since certain conditions on Equation~\eqref{eq:obj_func_discounted}, \cite[Chap. 8.10]{Puterman2005} must be verified. The proof can be found in Appendix~\ref{append:discountedLimit}.
% the solution of~\eqref{eq:obj_func_average} can be retrieved as a limit of the discounted reward model (i.e., letting the discount factor $\beta\to 1$).

\section{Lagrangian relaxation and Whittle's index}\label{sec:relaxation}
The model introduced above falls in the framework of RBP problems.
Each user $n\in\{1,\ldots,N\}$ present in the system can be seen as bandit or arm. The state of each arm represents the belief channel state of the user.  RBPs have been shown to be PSPACE-hard, see Papadimitriou et al. \cite{papadimitriou1999complexity}. A well established method for solving RBPs is the Lagrangian relaxation introduced by Whittle in~\cite{Whi88}.

The Lagrangian relaxation technique consists in relaxing the constraint on the available resources, by letting it be satisfied on average and not in every time slot, that is,
\begin{align}\label{relaxation_constraint}
\sum_{n=1}^Na_n^\phi(t)\leq M \Rightarrow \lim_{T\to \infty}\frac{1}{T}\mathbb{E}\left(\sum_{t=1}^T\sum_{n=1}^Na_n^\phi(t)\right)\leq M,
\end{align}
in the expected average reward model, and 
\begin{align}\label{relaxation_constraint_discounted}
&\sum_{n=1}^Na_n^\phi(t)\leq M \Rightarrow \lim_{T\to \infty}\frac{1}{\sum_{t=1}^T\beta^{t-1}}\mathbb{E}\left(\sum_{t=1}^T\sum_{n=1}^N\beta^{t-1} a_n^\phi(t)\right)\leq M,
\end{align}
in the discounted model with $0\leq \beta <1$.  The Objective function~\eqref{eq:obj_func_discounted} together with the relaxed constraint~\eqref{relaxation_constraint_discounted} constitute a  Partially Observable Markov Decision Process (POMDP), and we will refer to it as the \emph{$\beta$-discounted relaxed POMDP} throughout the paper. The particular case of $\beta=1$ applies to the expected long-run average reward model in Equation~\eqref{eq:obj_func_average} and Constraint~\eqref{relaxation_constraint}. We will refer to the latter simply as the \emph{relaxed POMDP}. The solution for the $\beta$-discounted relaxed POMDP can be derived as follows: find a policy $\phi$ such that
\begin{align}\label{eq:beta-discounted-relaxed}
\liminf_{T\to\infty}\frac{1}{\sum_{t=1}^T\beta^{t-1}}\mathbb{E}\bigg(&\sum_{t=1}^T\beta^{t-1}\bigg(\sum_{n=1}^NR_n(X_n(t),\vec b_n^\phi,a_n^\phi(t))+W(M-N+\sum_{n=1}^N(1-a_n^\phi(t)))\bigg)\bigg),
\end{align}
is maximized, where $W$ is a Lagrange multiplier and can be seen as a \emph{subsidy for passivity} (or equivalently, penalty for activity).
Observe that, in problem~\eqref{eq:beta-discounted-relaxed}, users become independent from each other and the $\beta$-discounted relaxed POMDP can be decomposed into $N$ uni-dimensional optimization problems, that is, the problem is to find a policy $\phi$ such that
\begin{align}\label{eq:beta-discounted-relaxed-unidimensinal}
\liminf_{T\to\infty}\frac{1}{\sum_{t=1}^T\beta^{t-1}}\mathbb{E}\bigg(&\sum_{t=1}^T\beta^{t-1}\bigg(R_n(X_n(t),\vec b^\phi_n,a^\phi_n(t))    -W(1-a^\phi_n(t))\bigg)\bigg),
\end{align}
is maximized
for all $n\in\{1,\ldots,N\}$. The solution of the $\beta$-discounted relaxed POMDP is an index type of policy, and can be obtained by combining the solution  of problem~\eqref{eq:beta-discounted-relaxed-unidimensinal} for all $n$. More specifically, the solution is characterized by the Whittle index (see Section~\ref{sec:WIP} for a formal definition of Whittle's index, and Whittle \cite{Whi88} for the first results on Whittle's index theory). An {\it index} can be seen as a value, that is assigned to a user in a given state, that measures the gain obtained by activating the user in that particular state. The index depends only on the parameters of that user. An index policy, is simply a policy that is characterized by those indices. An example of a simple index policy is a myopic policy, where the index reduces to the immediate reward gained by each user in the current state. Index policies, in particular Whittle's index, have become extremely popular in recent years due to their simplicity, see Liu et al.~\cite{LiuZhao10}, Ouyang et al.~\cite{OuyangMuruEryilShroff15}, and  Cecchi et al.~\cite{cecchi2016nearly} for a few examples related to the present work.
 
Next we will explain how to obtain Whittle's index for problem~\eqref{eq:beta-discounted-relaxed-unidimensinal} for all $n$.
We  drop the user index from the notation since we will focus on one dimensional problems. A general recipe to compute Whittle's index is to: (i) prove some structure on the solution of problem~\eqref{eq:beta-discounted-relaxed-unidimensinal} (usually optimality of threshold policies), (ii) show that the indexability property holds (which ensures Whittle's index to exist), (iii) derive an explicit expression for Whittle's index and (iv) define Whittle's index policy.
For this particular problem, proving threshold type of policies to be optimal has shown to be extremely challenging, except in the 2-state Markov channel systems (Gilbert-Elliot model), see Albright \cite{albright1979structural} and Lovejoy~\cite{lovejoy1987some}. To the best of our knowledge, all the research work done in this area has focused on either i.i.d. channel models or the Gilbert-Elliot channel model. In the more general case of $K$-state Markov channel models, with arbitrary $K$, no results are known.

In the present work, we have considered an approximation that allows to obtain Whittle's indices for arbitrarily large Markov channel models. To define this approximation recall the POMDP under study. The action space is defined by $\{0,1\}$, the set of belief states is given by $\Pi$ and the channel state transitions are characterized by the transition matrix $P=(p_{ij})_{i,j\in\{1,\ldots,K\}}$. Let us define $q^a(\vec\pi^\tau_i,\vec\pi^{\tau'}_j)$ to be the transition probability from belief state $\vec\pi^\tau_i$ to belief state $\vec\pi^{\tau'}_j$ conditioned on action $a\in\{0,1\}$. The transition probabilities that characterize the original POMDP are given as follows:
\begin{equation}\label{transitionsPOMDP_passive}
q^0(\vec\pi^\tau_i,\vec\pi^{\tau'}_j)=\begin{cases}
1, &\hbox{if } j=i \hbox{ and } \tau'=\tau+1,\\
0, & \hbox{otherwise},
\end{cases} 
\end{equation}
 and 
\begin{equation}\label{transitionsPOMDP_active}
q^1(\vec\pi^\tau_i,\vec\pi^{\tau'}_j)=
\begin{cases}
p^{(\tau)}_{ij} &\hbox{if } \tau'=1,\\
0, & \hbox{otherwise}.
\end{cases}  
\end{equation}
We next define the approximation, for which a complete analysis of Whittle's index policy can be performed.

\noindent
{\it Approximation:} We assume a POMDP with action space $\{0,1\}$, belief state space $\Pi$ and transition probabilities
\begin{equation}\label{equation:app}
q^1(\vec\pi^\tau_i,\vec\pi^{\tau'}_j)=
\begin{cases}
p^{s}_{j} &\hbox{if } \tau'=1,\\
0, & \hbox{otherwise},
\end{cases}  
\end{equation}
where $p_j^s$ is the steady-state probability of channel $h_j$, and $q^0(\cdot,\cdot)$ as defined in Equation~\eqref{transitionsPOMDP_passive}. That is, we assume that under passive action the transition probabilities are identical to that of the original POMDP, and that under active action, the transitions are governed by the steady-state probabilities.

A priori this approximation looks suitable for problems in which $N$ is much larger than $M$, since we expect users not to be selected for long time frames (and therefore the belief vector is closer to $(p_1^s,\ldots,p_K^s)$). We will observe in Section~\ref{sec:index} (Remark~\ref{remark:omega}) however, that if instead of taking $q^1(\vec\pi^\tau_i,\vec\pi^{\tau'}_j)=p_j^s$ we had taken  $q^1(\vec\pi^\tau_i,\vec\pi^{1}_j)=p_{ij}^{(r)}$ with $r$ independent of $\tau$ the heuristic we obtain is the same. In Section~\ref{sec:approxnumerics} we numerically evaluate the accuracy of this approximation.

\subsection{Threshold policies}
As mentioned in the previous section a possible first step into obtaining Whittle's index is to prove threshold type of policies to be optimal for the one dimensional optimization problem in Equation~\eqref{eq:beta-discounted-relaxed-unidimensinal}. A threshold policy can be described as follows. Let $\vec\Gamma$ be a vector of positive values. Then the action regarding a user in belief state $\vec\pi_{j}^{\tau}$ is $a=1$ (active action) if $\tau> \Gamma_j$ and $a=0$ (passive action) otherwise.
However, for the downlink problem with $K>2$  threshold policies are not necessarily optimal, Cecchi~\cite{cecchi2016nearly}. In this section, we prove threshold type of policies to be optimal for the approximation introduced above. 

We next give a formal definition of threshold policies.  
\begin{definition}\label{def:threshold}
We say that $\phi$ is a threshold type of policy if it prescribes action $a\in\{0,1\}$ in all states $\vec\pi_j^\tau$ such that $\tau\leq\Gamma_j$ and prescribes action $a'\in\{0,1\}$ with $a'\neq a$ for all $\vec\pi_j^\tau$ where $\tau>\Gamma_j$, $j\in\{1,\ldots,K\}$ and $\vec\Gamma=(\Gamma_1,\ldots,\Gamma_K)$. Such a threshold policy  will be referred to as policy $\vec\Gamma$.
\end{definition} 

We will focus on the discounted reward model in~\eqref{eq:beta-discounted-relaxed-unidimensinal}. The Bellman optimality equation writes
\begin{align}\label{eq:discounted_optimality}
 V_\beta^{app}(\vec\pi_{j}^\tau)=\max\{&R(\vec\pi_{j}^\tau,0)+ W +\beta V_\beta^{app}(\vec\pi_{j}^{\tau+1});     R^1  + \beta\sum_{k=1}^Kp_k^sV_\beta^{app}(\vec\pi_{k}^{1}))\},
\end{align}
 where $W$ is the subsidy for passivity.
% The function $R(\vec\pi_j^\tau,0)$ is the {\it immediate average reward}  for a non-selected user, and $R^1$ for a selected user.
 In the latter equation the function $V_\beta^{app}$ is the value function that corresponds to the discounted one dimensional problem given in Equation~\eqref{eq:beta-discounted-relaxed-unidimensinal}, and although not made explicit in the notation it also depends on~$W$.

In the next theorem we prove that  threshold type of policies are an optimal solution for~\eqref{eq:beta-discounted-relaxed-unidimensinal}. The proof can be found in Appendix~\ref{app:proofthreshold}.
\begin{theorem}[Discounted reward threshold]\label{prop:threshold}
Assume that A\ref{assump:belief} and A2 hold and let $W$ be fixed. Then there exist $\Gamma_1,\ldots,\Gamma_K\in\{0,1,\ldots\}$ such that the threshold policy $\vec\Gamma=(\Gamma_1,\ldots,\Gamma_K)$ is an optimal solution for problem~\eqref{eq:beta-discounted-relaxed-unidimensinal} for all $0\leq\beta<1$.
\end{theorem}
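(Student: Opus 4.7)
The plan is to exploit the structural simplicity of the approximation. Because the active-action continuation distribution $p^s_k$ in~\eqref{equation:app} does not depend on the current belief state, the second argument of the maximum in the Bellman equation~\eqref{eq:discounted_optimality} is a constant that I will denote by $C_W := R^1 + \beta \sum_{k=1}^K p_k^s V_\beta^{app}(\vec\pi_k^1)$. Hence
\[
V_\beta^{app}(\vec\pi_j^\tau) = \max\bigl\{R(\vec\pi_j^\tau, 0) + W + \beta V_\beta^{app}(\vec\pi_j^{\tau+1}),\; C_W\bigr\},
\]
and the optimal action at $\vec\pi_j^\tau$ is passive if and only if $R(\vec\pi_j^\tau,0)+W+\beta V_\beta^{app}(\vec\pi_j^{\tau+1}) \ge C_W$. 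The threshold property therefore reduces to showing that, for each fixed $j$, the left-hand side of this inequality is non-increasing in $\tau$.

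The central step is to show that $V_\beta^{app}(\vec\pi_j^\tau)$ is non-increasing in $\tau$ for every fixed $j$. I would prove this by value iteration: set $V^{(0)}\equiv 0$ and define $V^{(n+1)}$ by applying the Bellman operator to $V^{(n)}$. I claim, by induction on $n$, that $V^{(n)}(\vec\pi_j^\tau)$ is non-increasing in $\tau$ for every $j$. The base case is immediate. For the induction step, assume the claim holds for $V^{(n)}$; by A2, $R(\vec\pi_j^\tau,0)$ is non-increasing in $\tau$, so the passive-action argument of the Bellman update is a sum of two terms non-increasing in $\tau$, while the active-action argument is constant in $(j,\tau)$. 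The pointwise maximum of a non-increasing function of $\tau$ and a constant is non-increasing, closing the induction. Boundedness of the rewards (A2 together with $R^1<\infty$) and $0\le\beta<1$ guarantee, by the standard discounted-MDP contraction argument on the countable state space $\Pi$, that $V^{(n)}\to V_\beta^{app}$ pointwise, and monotonicity in $\tau$ is preserved in the limit.

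With monotonicity of $V_\beta^{app}(\vec\pi_j^\cdot)$ established, define $\Delta_j(\tau):=R(\vec\pi_j^\tau,0)+W+\beta V_\beta^{app}(\vec\pi_j^{\tau+1})-C_W$. By A2 and the previous paragraph, $\Delta_j$ is non-increasing in $\tau$. Hence $\{\tau\ge 1:\Delta_j(\tau)\ge 0\}$ is an initial segment of $\mathbb{N}$, and setting $\Gamma_j$ to be its supremum (with the convention $\Gamma_j=0$ when the set is empty, and extending the range to include the case when the set is all of $\mathbb{N}$) produces a threshold policy in the sense of Definition~\ref{def:threshold} which is optimal, with passive action for $\tau\le\Gamma_j$ and active action for $\tau>\Gamma_j$.

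I do not expect any serious obstacle. The main technicality is the standard convergence of value iteration on the countable state space $\Pi$, which is routine once boundedness of the rewards and $\beta<1$ are in hand. I also note that Assumption A\ref{assump:belief} is not directly invoked in this argument: it serves as a natural motivation (and sufficient condition) for the reward-monotonicity A2, but monotonicity of the value function only requires A2 itself.
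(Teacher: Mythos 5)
Your proposal is correct and follows essentially the same route as the paper: the paper phrases the key step as supermodularity of $f_\beta$ in $(\tau,a)$, but since the active-action continuation value is the same constant on both sides of that inequality, it collapses to exactly your observation that the passive-action argument is non-increasing in $\tau$, which both you and the paper establish by proving $V_\beta^{app}(\vec\pi_j^\tau)$ non-increasing in $\tau$ via value iteration using A2. Your remark that A1 is not directly invoked (only A2 is used) is also consistent with the paper's own argument.
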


%\begin{figure}\centering
%\includegraphics[scale=0.4]{Figures/Threshold_policy}
%\caption{Structure of the optimal solution of the relaxed problem. In the grey area sampling %the user is optimal, outside of it not sampling is prescribed.}\label{fig:thresholdpolicy}
%\end{figure}
Having proven the structure of the optimal policy, the explicit expression of $V_\beta^{app}$ can be obtained. The latter enables to prove conditions  8.10.1- 8.10.4' in Puterman~\cite{Puterman2005}, see Appendix~\ref{append:discountedLimit}.  It then can be shown that the one-dimensional long-run expected average reward,  equals $\lim_{\beta\to1}(1- \beta)V_\beta^{app}$, see~\cite[Th. 8.10.7]{Puterman2005}. Moreover, these conditions imply that (i) an optimal stationary policy exists, and (ii) the optimality equation for the average reward model, i.e.,
\begin{align}\label{eq:average_optimality}
 V^{app}(\vec\pi_{j}^\tau)+g(W)=\max\{&R(\vec\pi_{j}^\tau,0)+ W + V^{app}(\vec\pi_{j}^{\tau+1});     R^1  + \sum_{k=1}^Kp_k^sV^{app}(\vec\pi_{k}^{1}))\},
\end{align}
 has a solution. In the latter equation $g(W)$ refers to the average reward which can be obtained by $\lim_{\beta\to1}(1- \beta)V_\beta^{app}$.
In the following theorem we show that threshold type of policies are an optimal solution of the average reward model too. 
\begin{theorem}[Average reward threshold]\label{prop:threshold_average}
Assume that A\ref{assump:belief} and A2 hold and let $W$ be fixed. Then there exist $\Gamma_1,\ldots,\Gamma_K\in\{0,1,\ldots\}$ such that the threshold policy $\vec\Gamma=(\Gamma_1,\ldots,\Gamma_K)$ is an optimal solution for problem~\eqref{eq:beta-discounted-relaxed-unidimensinal} for $\beta=1$.
\end{theorem}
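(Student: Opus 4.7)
The plan is to derive Theorem~\ref{prop:threshold_average} from Theorem~\ref{prop:threshold} by a vanishing-discount argument, leveraging the Puterman conditions 8.10.1--8.10.4' that have been checked in Appendix~\ref{append:discountedLimit}.

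First I would extract the structural property behind Theorem~\ref{prop:threshold}, namely that for every $\beta\in[0,1)$ the discounted value function $\tau\mapsto V_\beta^{app}(\vec\pi_j^\tau)$ is non-increasing for each $j$. This monotonicity is exactly what drives threshold optimality in~\eqref{eq:discounted_optimality}: the active branch $R^1+\beta\sum_k p_k^s V_\beta^{app}(\vec\pi_k^1)$ is independent of $\tau$, while by A\ref{assump:belief} and A2 the one-period reward $R(\vec\pi_j^\tau,0)$ is non-increasing in $\tau$, so the passive branch $R(\vec\pi_j^\tau,0)+W+\beta V_\beta^{app}(\vec\pi_j^{\tau+1})$ inherits the same monotonicity from $V_\beta^{app}$.

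Next I would pass to the limit $\beta\uparrow 1$. Since conditions 8.10.1--8.10.4' hold, \cite[Th.~8.10.7]{Puterman2005} yields a reference state $\vec\pi_{j_0}^{\tau_0}$ along which the relative value functions $h_\beta(\vec\pi_j^\tau):=V_\beta^{app}(\vec\pi_j^\tau)-V_\beta^{app}(\vec\pi_{j_0}^{\tau_0})$ converge (along a subsequence $\beta_n\uparrow 1$) pointwise to a function $h$ that plays the role of $V^{app}$ in~\eqref{eq:average_optimality}, with $g(W)=\lim_{\beta\to 1}(1-\beta)V_\beta^{app}$. Monotonicity in $\tau$ is preserved under subtraction of an additive constant and under pointwise limits, so $h(\vec\pi_j^\tau)$ is still non-increasing in $\tau$ for each $j$. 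Re-running the argument of the first paragraph on~\eqref{eq:average_optimality} in place of~\eqref{eq:discounted_optimality}, the passive branch $R(\vec\pi_j^\tau,0)+h(\vec\pi_j^{\tau+1})$ is non-increasing in $\tau$ while the active branch is constant in $\tau$, so for each $j$ there exists $\Gamma_j\in\{0,1,\ldots\}\cup\{\infty\}$ such that the argmax prescribes the passive action iff $\tau\leq\Gamma_j$. The stationary policy $\vec\Gamma=(\Gamma_1,\ldots,\Gamma_K)$ is therefore optimal for~\eqref{eq:beta-discounted-relaxed-unidimensinal} with $\beta=1$, as claimed.

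The main obstacle I expect is controlling $h_\beta$ uniformly in $\beta$ in a neighbourhood of $\beta=1$ at large $\tau$: without such control, the pointwise limit $h$ could either fail to satisfy~\eqref{eq:average_optimality} at the tail states or fail to inherit the monotonicity cleanly at infinity. This is handled by observing that under A\ref{assump:belief} the belief vector $\vec\pi_j^\tau$ approaches the stationary distribution $(p_1^s,\ldots,p_K^s)$ as $\tau\to\infty$, and that in the approximation defined by~\eqref{equation:app} the transition kernel under the active action is itself governed by the stationary distribution. Together these make the tails of $h_\beta$ eventually flat and uniformly bounded in $\beta$, which justifies the limit passage and, as a by-product, ensures that the choice $\Gamma_j=\infty$ (i.e., never activate on belief states observed last in $h_j$) is a legitimate stationary policy with a well-defined long-run average reward coinciding with the $\beta\to 1$ limit of its discounted value.
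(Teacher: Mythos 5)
Your proof is correct, but it reaches the conclusion by a different route than the paper. The paper's proof works directly with the average-reward optimality equation~\eqref{eq:average_optimality}: it reduces threshold optimality (via A2) to the claim that $V(\cdot)$ is non-increasing in $\tau$, and establishes that monotonicity by induction on the steps of the \emph{average-reward value iteration} $V_{r+1}(\vec\pi_j^\tau)=\max\{R(\vec\pi_j^\tau,0)+W+V_r(\vec\pi_j^{\tau+1});\,R^1+\sum_k p_k^s V_r(\vec\pi_k^1)\}$, reusing the induction step from the proof of Theorem~\ref{prop:threshold} and then letting $r\to\infty$. You instead run a vanishing-discount argument: you take the monotonicity of $V_\beta^{app}$ already proved for Theorem~\ref{prop:threshold}, pass to relative value functions $h_\beta$, and invoke conditions 8.10.1--8.10.4' to extract a subsequential limit $h$ that solves~\eqref{eq:average_optimality} and inherits monotonicity in $\tau$. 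Both arguments rest on the same two pillars (the Puterman conditions verified in Appendix~\ref{append:discountedLimit}, and monotonicity of the value function in $\tau$), so the difference is in which limit is taken: the paper lets the value-iteration index $r\to\infty$ at fixed $\beta=1$, while you let $\beta\uparrow 1$ at the level of the already-structured discounted solutions. Your route has the advantage of directly recycling Theorem~\ref{prop:threshold} rather than repeating its induction, at the cost of having to justify the tail control of $h_\beta$ uniformly in $\beta$ (which you correctly flag and resolve using the fact that the active branch in the approximation~\eqref{equation:app} is independent of the current belief state); the paper's route avoids that uniformity issue but must instead justify $\lim_{r\to\infty}V_r=V$ for the average-reward iteration, which it only asserts ``after verification of mild assumptions.'' One small mismatch with the statement as written: your construction naturally allows $\Gamma_j=\infty$, whereas the theorem asserts $\Gamma_j\in\{0,1,\ldots\}$; this is a cosmetic issue the paper itself glosses over (its index algorithm in Theorem~\ref{prop:whittle} explicitly admits $\Gamma_j^i=\infty$), so it does not affect the substance of your argument.
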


\begin{proof}
 For ease of notation we drop the superscript $app$.
We want to prove that  if it is optimal to select the user  in state $\vec \pi_j^{\tau}$ then it is also optimal to select the user in state $\vec \pi_j^{\tau+1}$. From Equation~\eqref{eq:average_optimality}, the latter statement translates to showing that 
\begin{align*}
R^1+\sum_{k=1}^Kp_k^sV(\vec\pi_k^1)\geq R(\vec\pi_j^\tau,0)+W+V(\vec\pi_j^{\tau+1}),
\end{align*}
implies
\begin{align*}
R^1+\sum_{k=1}^Kp_k^sV(\vec\pi_k^1)\geq R(\vec\pi_j^{\tau+1},0)+W+V(\vec\pi_j^{\tau+2}).
\end{align*}
To prove this implication it suffices to show that 
\begin{align}\label{eq-another_equation}
R(\vec\pi_j^\tau,0)+W+V(\vec\pi_j^{\tau+1})\geq R(\vec\pi_j^{\tau+1},0)+W+V(\vec\pi_j^{\tau+2}).
\end{align}
Due to A2 (i.e., $R(\vec\pi_j^\tau,0)\geq R(\vec\pi_j^{\tau+1},0)$ for all $\tau>0$), to show~\eqref{eq-another_equation}, it suffices to show $V(\vec\pi_j^{\tau+1})\geq V(\vec\pi_j^{\tau+2})$ for all $j$ and all $\tau>0$. That is, $V(\cdot)$ being non-increasing. In order to prove the latter, we will use the value iteration approach Puterman~\cite[Chap. 8]{Puterman2005}. Define $V_{0}(\vec\pi_j^\tau)=0$ for all $j\in\{1,\ldots,K\}$ and $\tau>0$ and 
\begin{align*}
V_{r+1}(\vec\pi_j^\tau)=\max\{&R(\vec\pi_j^\tau,0)+W+V_{r}(\vec\pi_j^{\tau+1}),   R^1+\sum_{k=1}^Kp_{k}^{s}V_{r}(\vec\pi_k^1)\},
\end{align*}
with $g(W)=V_{r+1}(\vec\pi_j^\tau)-V_{r}(\vec\pi_j^\tau)$.
Observe that $V_{0}(\vec\pi_j^\tau)=0$ satisfies the non-increasing property. We assume that $V_{r}(\vec\pi_j^\tau)$ satisfies it for all $j\in\{1,\ldots,K\}$ and all $\tau>0$, and we prove that $V_{r+1}(\vec\pi_j^\tau)$ is non-increasing as well. The latter can be proven using the arguments used in the proof of Theorem~\ref{prop:threshold}. We therefore skip the calculations here.

After proving $V_r(\cdot)$ to be non-increasing and since $\lim_{r\to\infty}V_r(\cdot)=V(\cdot)$ (which holds after verification of mild assumptions), $V_r$ being non-increasing implies $V$ being non-increasing. This concludes the proof.
\end{proof}
We have proven that an stationary solution for the average reward model exists and that the Bellman optimality equation has a threshold type of solution. Therefore, we concentrate on the average reward model to obtain Whittle's index policy.

\subsection{Indexability and Whittle's index}\label{sec:index}
In this section we prove the problem to be indexable. Indexability is the property that ensures Whittle's index to exist. It establishes that as the Lagrange multiplier $W$ increases, the set of states in which the optimal action is the passive action increases. In the following we formally define this property.
\begin{definition}\label{def1} Let $\vec\Gamma(W)$ be an optimal threshold policy for a fixed subsidy $W$. We define the set
$\mathcal{L}(W):=\{\vec\pi_j^{\tau}\in\Pi, \tau>0, \hbox{ and } j\in\{1,\ldots,K\}:\tau\leq\Gamma_j(W)\}$, i.e., the set of all belief states in which passive action is prescribed by policy $\vec\Gamma(W)$.
\end{definition} 
\begin{definition}
Let $\mathcal{L}(W)\subseteq\Pi$ be as defined in Definition~\ref{def1}. Then a bandit is said to be {\it indexable} if $\mathcal{L}(W)\subseteq\mathcal{L}(W')$ for all $W<W'$, i.e., the set of belief states in which passive action is prescribed by an optimal policy of the relaxed problem increases as $W$ increases. A RBP is indexable if all bandits are indexable.
\end{definition}
Although indexability seems a natural property not all problems satisfy this condition; a few examples are given in Hodge et al.~\cite{hodge2011dynamic} and Whittle~\cite{Whi88}.
Next we prove the indexability property.

\begin{proposition}\label{prop:indexability}
All users are indexable.
\end{proposition}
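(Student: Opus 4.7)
The plan is to fix an arbitrary user (dropping the user subscript) and show that each component of the optimal threshold vector $\vec\Gamma(W)$---which by Theorem~\ref{prop:threshold_average} characterises $\mathcal{L}(W)$---is non-decreasing in $W$. This immediately gives $\mathcal{L}(W)\subseteq\mathcal{L}(W')$ whenever $W\leq W'$, i.e.\ indexability.

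First I would exploit the renewal structure induced by the approximation~\eqref{equation:app}: every activation regenerates the belief to $\vec\pi_k^1$ with probability $p_k^s$ independently of the pre-activation state, so under a threshold policy $\vec\Gamma$ trajectories decompose into independent cycles, the cycle through the $j$-th sub-chain being entered with probability $p_j^s$ and having length $\Gamma_j+1$. A renewal--reward calculation then yields
\[
g^{\vec\Gamma}(W) \;=\; \frac{R^1 + \sum_{j=1}^K p_j^s \sum_{\tau=1}^{\Gamma_j} R(\vec\pi_j^\tau,0) + W\,\bar\Gamma}{1 + \bar\Gamma}, \qquad \bar\Gamma := \sum_{j=1}^K p_j^s \Gamma_j,
\]
which is affine in $W$ with slope $\bar\Gamma/(1+\bar\Gamma)\in[0,1)$. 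Hence $g^*(W):=\max_{\vec\Gamma} g^{\vec\Gamma}(W)$ is convex, non-decreasing and piecewise-affine in $W$, with slope at most one.

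Second, I would derive the Whittle index via marginal analysis of this ratio. Incrementing $\Gamma_j$ from $\tau-1$ to $\tau$ adds (weighted by $p_j^s$) the amount $R(\vec\pi_j^\tau,0)+W$ to the numerator and $1$ to the denominator of $g^{\vec\Gamma}(W)$, so by the standard rule for maximising a ratio of two affine quantities this increment is weakly beneficial at subsidy $W$ iff $R(\vec\pi_j^\tau,0)+W \geq g^*(W)$. Setting $h(W):=g^*(W)-W$, which by the slope bound above is continuous and non-increasing, passive is optimal at $\vec\pi_j^\tau$ iff $h(W)\leq R(\vec\pi_j^\tau,0)$, and I define the Whittle index by $\nu(\vec\pi_j^\tau):=\inf\{W:h(W)\leq R(\vec\pi_j^\tau,0)\}$. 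One checks via~\eqref{eq:average_optimality} that at $W=\nu(\vec\pi_j^\tau)$ the Bellman equation is actually indifferent between the two actions.

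Third, Assumption~A2 gives $R(\vec\pi_j^\tau,0)\geq R(\vec\pi_j^{\tau+1},0)$ for every $\tau$ and $j$, so $\{W:h(W)\leq R(\vec\pi_j^{\tau+1},0)\}\subseteq\{W:h(W)\leq R(\vec\pi_j^\tau,0)\}$, which yields $\nu(\vec\pi_j^\tau)\leq\nu(\vec\pi_j^{\tau+1})$. Since passive is optimal at $\vec\pi_j^\tau$ iff $W\geq\nu(\vec\pi_j^\tau)$, the threshold $\Gamma_j(W)=\max\{\tau:\nu(\vec\pi_j^\tau)\leq W\}$ is non-decreasing in $W$, which is exactly indexability. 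The main obstacle is the first step: establishing the renewal--reward expression for $g^{\vec\Gamma}(W)$ on the countably-infinite belief space $\Pi$ and verifying positive recurrence of the induced cycle structure for every finite $\vec\Gamma$; once this is in place the slope bound $\bar\Gamma/(1+\bar\Gamma)\leq 1$ makes $h$ monotone and converts indexability into the monotone-inversion argument above combined with A2.
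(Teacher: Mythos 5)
Your argument is correct, and it reaches indexability by a genuinely different route than the paper. The paper's proof combines the threshold structure of Theorem~\ref{prop:threshold_average} with a direct computation of the stationary distribution $\alpha^{\vec\Gamma}(\vec\pi_j^r)=\omega_j/\sum_k(\Gamma_k+1)\omega_k$, showing that the expected passive time $\sum_j\Gamma_j\omega_j/\sum_k(\Gamma_k+1)\omega_k$ --- which is exactly the slope of the affine map $W\mapsto g^{\vec\Gamma}(W)$ --- is monotone in $\vec\Gamma$; monotonicity of the maximizer on the upper envelope then gives $\vec\Gamma(W)\leq\vec\Gamma(W')$. You instead observe that every $g^{\vec\Gamma}(W)-W$ has negative slope $-1/(1+\bar\Gamma)$, so $h(W)=g^{*}(W)-W$ is non-increasing, and you characterize the passive set directly as $\{\vec\pi_j^\tau: R(\vec\pi_j^\tau,0)\geq h(W)\}$ via the marginal (ratio-improvement) condition; the growth of this set in $W$ is then immediate. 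Your renewal--reward formula for $g^{\vec\Gamma}(W)$ agrees exactly with the paper's (since $\omega_j=p_j^s$), and your boundary condition $g^{*}(W)\geq R(\vec\pi_j^{\Gamma_j+1},0)+W$ is precisely what the Bellman equation~\eqref{eq:average_optimality} yields at the threshold. What your route buys is a closed-form description of $\mathcal{L}(W)$ and, essentially for free, the Whittle index as $\nu(\vec\pi_j^\tau)=\inf\{W:h(W)\leq R(\vec\pi_j^\tau,0)\}$ together with its monotonicity in $\tau$ --- results the paper only obtains later through the iterative construction of Theorem~\ref{prop:whittle} and Corollary~\ref{cor:whittle}. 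The price is that your marginal/greedy characterization of the optimal threshold leans on Assumption~A2 (non-increasing marginal rates along each sub-chain) to ensure local optimality implies global optimality over the lattice of thresholds, whereas the paper's Proposition~\ref{prop:indexability} needs only the threshold structure and the stationary-distribution computation. The ``obstacle'' you flag is not a real gap: for finite $\vec\Gamma$ the induced chain lives on the finite set $\{\vec\pi_j^\tau:\tau\leq\Gamma_j+1\}$ and is trivially positive recurrent, which is exactly how the paper justifies the expression for $\alpha^{\vec\Gamma}$ in Appendix~\ref{app:omegas}.
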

\begin{proof}
To prove indexability, i.e., $\mathcal{L}(W)\subseteq\mathcal{L}(W')$ for all $W<W'$, one needs to show that  $\vec\Gamma(W)\leq\vec\Gamma(W')$ for all $W<W'$ (where $\leq$ stands for $\Gamma_i(W)\leq\Gamma_i(W')$ for all $i\in\{1,\ldots,K\}$). The latter equivalence is implied by the fact that an optimal solution of problem~\eqref{eq:average_optimality} is of threshold type (Theorem~\ref{prop:threshold_average}). 

Let $\alpha^{\vec\Gamma(W)}(\vec\pi_j^\tau)$ be the steady-state probability of being in state $\vec\pi_j^\tau$ under threshold policy $\vec\Gamma(W)$. Having proven threshold type of policies to be an optimal solution, for a user to be indexable it suffices to show that 
$$\sum_{j=1}^K\sum_{r=1}^{\Gamma_j(W)}\alpha^{\vec\Gamma(W)}(\vec\pi_j^r)\leq \sum_{j=1}^K\sum_{r=1}^{\Gamma_j(W')}\alpha^{\vec\Gamma(W')}(\vec\pi_j^r),$$
if $\vec\Gamma(W)\leq\vec\Gamma(W')$. That is, the probability of being in passive mode is greater as the threshold increases. Note that under threshold policy $\vec\Gamma(W)$ $\alpha^{\vec\Gamma(W)}(\vec\pi_j^r)=\frac{\omega_j}{\sum_{k=1}^K(\Gamma_k(W)+1)\omega_k}$ for all $r\in\{1,\ldots,\Gamma_j(W)+1\}$, where $\omega_j$ is computed in Appendix~\ref{app:omegas}, and therefore 
\begin{align*}
&\sum_{j=1}^K\sum_{r=1}^{\Gamma_j(W)}\alpha^{\vec\Gamma(W)}(\vec\pi_j^r)=\frac{\sum_{j=1}^K\Gamma_j(W)\omega_j}{\sum_{k=1}^K(\Gamma_k(W)+1)\omega_k}     \leq\frac{\sum_{j=1}^K\Gamma_j(W')\omega_j}{\sum_{k=1}^K(\Gamma_k(W')+1)\omega_k}=\sum_{j=1}^K\sum_{r=1}^{\Gamma_j(W')}\alpha^{\vec\Gamma(W')}(\vec\pi_j^r),
\end{align*}
since $\vec\Gamma(W)\leq\vec\Gamma(W')$. Therefore users are indexable.
\end{proof}
%In Appendix~\ref{app:proofindexability} we prove that for the problem under study all users are indexable.
%\begin{proposition}\label{prop:indexability} All users are indexable.
%\end{proposition} 
Having proven indexability Whittle's index can be defined as follows.
\begin{definition} Whittle's index in state $\pi_j^\tau$ is defined as the smallest value of $W$ such that an optimal policy of the single-arm POMDP is indifferent of the action taken in $\pi_j^\tau$.
\end{definition}

We can now proceed to solve Whittle's index. Let us define $\mathcal{T}(\vec\Gamma)=\{\vec\Gamma'=(\Gamma_1',\ldots,\Gamma_K') \text{ with } \Gamma_i'\in\mathbb{N}\cup\{0\} \text{ for all } i: \vec\Gamma'>\vec\Gamma\}$, that is, the set of all threshold policies that are {\it greater}  than $\vec\Gamma$ ({\it i.e.,} $\vec\Gamma'>\vec\Gamma \Leftrightarrow \Gamma_j'\geq\Gamma_j$ for all $j$ and $\Gamma\neq\Gamma'$). In particular, we denote $\mathcal{T}(0)=\{\vec\Gamma'=(\Gamma_1',\ldots,\Gamma_K') \text{ with } \Gamma_i'\in\mathbb{N}\cup\{0\} \text{ for all } i: \vec\Gamma'>(0,\ldots,0)\}$.  Let $\alpha^{\vec\Gamma}(\vec\pi_j^\tau)$ be the steady-state probability of being in state $\vec\pi^\tau_j$ under policy $\vec\Gamma$, and let $b^{\vec\Gamma}$ the steady-state belief state under policy $\vec\Gamma$. It then can be shown that 
\begin{align*}
&\lim_{\beta\to1}(1-\beta)V_\beta^{app}(\cdot)=g^{\vec\Gamma}(W)=\mathbb{E}(R( b^{\vec\Gamma},a^{\vec\Gamma}(b^{\vec\Gamma})))+W\sum_{k=1}^K\sum_{i=1}^{\Gamma_k}\alpha^{\vec\Gamma}(\vec\pi_k^i),
\end{align*}
 where $g^{\vec\Gamma}(W)$ is the average reward under policy $\vec\Gamma$ when the subsidy for passivity equals $W$. Whittle's index for the average reward problem can then be computed as explained in the next theorem. The proof can be found in Appendix~\ref{app:Whittles index}. 
\begin{theorem}\label{prop:whittle}
Assume that an optimal solution of the single-arm POMDP is of threshold type and that $\sum_{k=1}^K\sum_{r=1}^{\Gamma_k}\alpha^{\vec\Gamma}(\vec\pi_k^r)$ is non-decreasing in $\vec\Gamma$. Then the problem is indexable and Whittle's index for user $n$ is computed as follows (we omit the dependence on $n$ from the notation):

 {\it Step i:} Compute
\small{\begin{equation*}
W_{i}=\inf_{\vec\Gamma\in \mathcal{T}(\vec\Gamma^{i-1})}\frac{\mathbb{E}(R(b^{\vec\Gamma^{i-1}},a^{\vec\Gamma^{i-1}}(b^{\vec\Gamma^{i-1}})))-\mathbb{E}(R(b^{\vec\Gamma},a^{\vec\Gamma}(b^{\vec\Gamma})))}{\sum_{j=1}^{K}\left(\sum_{r=1}^{\Gamma_j}\alpha^{\vec\Gamma}(\vec\pi^r_j)-\sum_{r=1}^{\Gamma_j^{i-1}}\alpha^{\vec\Gamma^{i-1}}(\vec\pi^r_j)\right)},
\end{equation*}}
for all $i\geq0$, where $\vec\Gamma^{-1}=\vec0$. Denote by $\vec\Gamma^{i}$ the largest minimizer for all $i>0$. We define $W(\vec\pi_j^\tau):=W_{i}$ for each $j$, such that $\Gamma^{i-1}_j<\tau\leq\Gamma^{i}_j$. If $\vec\Gamma^{i}_j=\infty$ for all $j$ then stop, otherwise go to Step $i+1$. When the algorithm stops the Whittle index for all $\pi_j^\tau$ has been obtained and is given by $W(\pi_j^\tau)$.
\end{theorem}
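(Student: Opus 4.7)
The plan is to exploit the threshold structure already established in Theorem~\ref{prop:threshold_average} and translate the definition of Whittle's index into an indifference condition between consecutive threshold policies. Since we restrict to the class of threshold policies $\vec\Gamma$, every policy is parameterized by finitely (or countably) many integers, and the average reward under subsidy $W$ takes the affine form
\begin{equation*}
g^{\vec\Gamma}(W) \;=\; \mathbb{E}\!\left(R(b^{\vec\Gamma},a^{\vec\Gamma}(b^{\vec\Gamma}))\right) \;+\; W\sum_{k=1}^K\sum_{i=1}^{\Gamma_k}\alpha^{\vec\Gamma}(\vec\pi_k^i),
\end{equation*}
with slope equal to the stationary probability of being passive under $\vec\Gamma$. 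The optimal average gain $g(W)=\sup_{\vec\Gamma} g^{\vec\Gamma}(W)$ is therefore the upper envelope of a family of affine functions, hence piecewise linear and convex in $W$.

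First, I would use the monotonicity hypothesis (the passive probability is non-decreasing in $\vec\Gamma$) together with Proposition~\ref{prop:indexability} to conclude that the optimal threshold $\vec\Gamma^\star(W)$ is non-decreasing in $W$. Consequently, the breakpoints of $g(\cdot)$ form an increasing sequence $W_0<W_1<\cdots$, and between two consecutive breakpoints a single threshold policy is optimal. By definition, Whittle's index at state $\vec\pi_j^\tau$ is the smallest $W$ at which passive and active are both optimal in $\vec\pi_j^\tau$; since the action at $\vec\pi_j^\tau$ toggles exactly when $\Gamma_j^\star(W)$ crosses $\tau$, the Whittle index at $\vec\pi_j^\tau$ coincides with the breakpoint $W_i$ for which $\Gamma_j^{i-1}<\tau\leq\Gamma_j^i$.

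Next, I would characterize each breakpoint as an indifference condition. If $\vec\Gamma^{i-1}$ is the currently optimal policy on an interval immediately below $W_i$ and $\vec\Gamma^i$ is the new optimal policy immediately above, the equality $g^{\vec\Gamma^{i-1}}(W_i)=g^{\vec\Gamma^{i}}(W_i)$ yields after rearrangement precisely the ratio stated in the theorem. Taking the infimum over all candidate successors $\vec\Gamma\in\mathcal{T}(\vec\Gamma^{i-1})$ identifies the next breakpoint, and selecting the largest minimizer ensures that $\vec\Gamma^i$ is indeed the optimal policy on the interval $(W_i,W_{i+1})$ rather than an intermediate policy also achieving the supremum at $W_i$. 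Initializing at $\vec\Gamma^{-1}=\vec 0$ (always active, optimal for very small $W$) and iterating produces the full sequence of breakpoints, and hence the index at every belief state.

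The main obstacle I anticipate is justifying the "largest minimizer" selection rigorously: one must show that if several thresholds $\vec\Gamma$ attain the infimum defining $W_i$, then the coordinate-wise largest among them is itself feasible (i.e.\ a valid threshold vector), is optimal at $W_i$, and remains optimal on some right neighbourhood of $W_i$. This requires combining the linearity of $W\mapsto g^{\vec\Gamma}(W)$ with the monotonicity of the slope in $\vec\Gamma$: if two threshold policies $\vec\Gamma,\vec\Gamma'$ are simultaneously optimal at $W_i$, then their coordinate-wise maximum has slope at least as large and reward at least as high as either, so by convexity of the envelope it is optimal at $W_i$ and strictly preferred just above $W_i$. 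The monotonicity hypothesis in the theorem is precisely what makes this argument go through and guarantees consistency of the iterative scheme. Once this technical point is settled, indexability (already shown in Proposition~\ref{prop:indexability}) combined with the explicit construction above completes the proof.
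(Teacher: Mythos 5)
Your proposal matches the paper's proof in all essentials: both exploit that $g^{\vec\Gamma}(W)$ is affine in $W$ with slope equal to the stationary passive probability (non-decreasing in $\vec\Gamma$ by hypothesis), so the optimal gain is an upper envelope of affine functions whose breakpoints, computed by the stated infimum/indifference condition, are exactly the values of $W$ at which the optimal threshold changes; the paper carries this out as an explicit induction showing $W_i<W_{i+1}$ and that $\vec\Gamma^{i}$ is optimal on $(W_i,W_{i+1})$, which is your envelope argument in different clothing. The one soft spot is your justification of the ``largest minimizer'' selection: the claim that the coordinate-wise join of two optimizers at $W_i$ has ``reward at least as high'' does not follow from monotone slopes alone (the unsubsidized reward $\mathbb{E}(R(b^{\vec\Gamma},a^{\vec\Gamma}(b^{\vec\Gamma})))$ is in fact non-increasing in $\vec\Gamma$ by A2, so what you need is a supermodularity/lattice property of $\vec\Gamma\mapsto g^{\vec\Gamma}(W_i)$ that is not established) --- but the paper's own proof simply posits the largest minimizer without justification and only resolves the point implicitly later, in Lemma~\ref{prop:expressionwhittle} and Corollary~\ref{cor:whittle}, by showing the minimizer increments a single coordinate at each step.
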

In the following lemma and corollary we derive an explicit expression for Whittle's index. The proof of the lemma can be found in Appendix~\ref{app:explicit_Whittle}.
\begin{lemma}\label{prop:expressionwhittle}
 If  in Step i of Theorem~\ref{prop:whittle} for an i$>0$, the minimizer $\vec\Gamma^i$ is such that $\sum_{j=1}^K\Gamma_j^i=(\sum_{j=1}^K\Gamma_j^{i-1})+1$ and $\Gamma_j^i\geq\Gamma_j^{i-1}$ for all $j\in\{1,\ldots,K\}$, then
$$
W_i=R^1+\sum_{k=1}^K\sum_{j=1}^{\Gamma_k^{i-1}}R(\vec\pi_k^j,0)\omega_k-R(\vec\pi_u^{\Gamma_u^i},0)\sum_{k=1}^K(\Gamma_k^{i-1}+1)\omega_k,
$$
with $u$ such that $\Gamma_u^i=\Gamma_u^{i-1}+1$.
\end{lemma}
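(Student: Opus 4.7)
The plan is to substitute the specific form of the minimizer $\vec\Gamma^i$ directly into the index formula of Theorem~\ref{prop:whittle} and simplify the resulting ratio. Because by hypothesis $\vec\Gamma^i$ differs from $\vec\Gamma^{i-1}$ only in the $u$-th coordinate, with $\Gamma_u^i=\Gamma_u^{i-1}+1$, both the numerator and the denominator change by a telescoping single-term increment, and the ratio collapses to the closed form in the statement.

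First I would rewrite the average immediate reward under a generic threshold policy $\vec\Gamma$ as
\begin{equation*}
\mathbb{E}(R(b^{\vec\Gamma},a^{\vec\Gamma}(b^{\vec\Gamma})))=\frac{1}{A^{\vec\Gamma}}\Bigl(\sum_{k=1}^K\omega_k\sum_{r=1}^{\Gamma_k}R(\vec\pi_k^r,0)+R^1\sum_{k=1}^K\omega_k\Bigr),
\end{equation*}
where $A^{\vec\Gamma}:=\sum_{k=1}^K(\Gamma_k+1)\omega_k$ and $\alpha^{\vec\Gamma}(\vec\pi_j^r)=\omega_j/A^{\vec\Gamma}$ for $r\in\{1,\dots,\Gamma_j+1\}$, exactly as computed in Appendix~\ref{app:omegas} and already used inside the proof of Proposition~\ref{prop:indexability}. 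The passive states $\vec\pi_j^r$ with $r\le\Gamma_j$ contribute $R(\vec\pi_j^r,0)$, whereas the unique active state $\vec\pi_j^{\Gamma_j+1}$ of each $j$-branch contributes the constant $R^1$. I would also record the normalization $\sum_{k=1}^K\omega_k=1$, which follows from the fact that the $\alpha^{\vec\Gamma}$'s sum to one.

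Second, I would specialize to $\vec\Gamma=\vec\Gamma^i$ and $\vec\Gamma=\vec\Gamma^{i-1}$. The hypothesis yields $A^{\vec\Gamma^i}-A^{\vec\Gamma^{i-1}}=\omega_u$ and
\begin{equation*}
\sum_{k=1}^K\omega_k\Bigl(\sum_{r=1}^{\Gamma_k^i}R(\vec\pi_k^r,0)-\sum_{r=1}^{\Gamma_k^{i-1}}R(\vec\pi_k^r,0)\Bigr)=\omega_u R(\vec\pi_u^{\Gamma_u^i},0).
\end{equation*}
Substituting these increments into the denominator of the index formula gives $\omega_u/(A^{\vec\Gamma^i}A^{\vec\Gamma^{i-1}})$, and into the numerator gives
\begin{equation*}
\frac{\omega_u}{A^{\vec\Gamma^i}A^{\vec\Gamma^{i-1}}}\Bigl(R^1+\sum_{k=1}^K\omega_k\sum_{j=1}^{\Gamma_k^{i-1}}R(\vec\pi_k^j,0)-A^{\vec\Gamma^{i-1}}R(\vec\pi_u^{\Gamma_u^i},0)\Bigr),
\end{equation*}
where $\sum_k\omega_k=1$ has been used once more to fold the active-reward contribution $R^1\sum_k\omega_k$ into the single term $R^1$. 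Taking the ratio, the common factor $\omega_u/(A^{\vec\Gamma^i}A^{\vec\Gamma^{i-1}})$ cancels and the stated expression for $W_i$ emerges.

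The main obstacle is purely bookkeeping: one has to be careful that the active contribution $R^1\sum_k\omega_k$ is not double counted when one threshold is incremented, and that the passive sum gains exactly one extra term, namely $\omega_u R(\vec\pi_u^{\Gamma_u^i},0)$, corresponding to the state that was active under $\vec\Gamma^{i-1}$ but becomes passive under $\vec\Gamma^i$. Once this single increment is isolated and $\sum_k\omega_k=1$ is applied, the algebraic cancellation reducing the Whittle-index infimum to the stated closed form is routine.
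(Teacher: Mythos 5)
Your proposal is correct and follows essentially the same route as the paper's proof in Appendix~E: substitute $\alpha^{\vec\Gamma}(\vec\pi_j^r)=\omega_j/\sum_k(\Gamma_k+1)\omega_k$ into the numerator and denominator of the index formula from Theorem~\ref{prop:whittle}, observe that both reduce to a common factor $\omega_u/(A^{\vec\Gamma^i}A^{\vec\Gamma^{i-1}})$ times the desired quantities, cancel, and invoke $\sum_k\omega_k=1$. Your bookkeeping via the single-term increments $A^{\vec\Gamma^i}-A^{\vec\Gamma^{i-1}}=\omega_u$ and $S^{\vec\Gamma^i}-S^{\vec\Gamma^{i-1}}=\omega_uR(\vec\pi_u^{\Gamma_u^i},0)$ is a slightly tidier organization of the same computation.
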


In the next corollary,  we prove that Whittle's index can be easily computed and is non-decreasing in $\tau$.
\begin{corollary}\label{cor:whittle}
Let us define $u^0=\argmax_{u\in\{1,\ldots,K\}}R(\vec\pi_u^1,0)$, and $\vec\Gamma^0=\vec e_{u^0}$, with $\vec e_{u^0}$ the vector with all entries 0 except the $u^0$th element which equals 1. Define
\begin{align}\label{eq:index_definition}
u^i&=\argmax_{u\in\{1,\ldots,K\}} R(\pi_u^{\Gamma^{i-1}_u+1},0),  \hbox{ and, }\nonumber\\
\vec\Gamma^i&=\left\{\sum_{r=0}^i\mathbf{1}_{\{u^r=1\}},\ldots, \sum_{r=0}^i\mathbf{1}_{\{u^r=K\}}\right\},\hbox{ for all } i>0,
\end{align}
where $\mathbf{1}$ refers to the indicator function.
Then
\begin{align*}
W(\vec\pi_{u^j}^{\Gamma^j_{u^j}})=&R^1+\sum_{k=1}^K\sum_{r=1}^{\Gamma_k^{j-1}}R(\vec\pi_k^r,0)\omega_k    -R(\vec\pi_{u_j}^{\Gamma^j_{u^j}},0)\sum_{k=1}^K(\Gamma_k^{j-1}+1)\omega_k, \hbox{ for all  }j\geq0.
\end{align*}
 Whittle's index, $W(\vec\pi_k^\tau)$, is non-decreasing in $\tau$ for all $k$.
\end{corollary}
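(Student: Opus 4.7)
The plan is to show that the sequence $\{\vec\Gamma^i\}$ defined in the corollary coincides with the sequence of largest minimizers produced by the iterative procedure of Theorem~\ref{prop:whittle}; once this is established, Lemma~\ref{prop:expressionwhittle} yields the closed-form expression for $W(\vec\pi_{u^j}^{\Gamma^j_{u^j}})$, and indexability (Proposition~\ref{prop:indexability}) gives the monotonicity of $W(\vec\pi_k^\tau)$ in $\tau$.

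I proceed by induction on $i$. The base case $\vec\Gamma^0 = \vec e_{u^0}$ follows from Lemma~\ref{prop:expressionwhittle} with $\vec\Gamma^{-1}=\vec 0$: the step-$0$ ratio for a candidate $\vec e_u$ reduces to $R^1 - R(\vec\pi_u^1,0)\sum_k \omega_k$, which is minimized by $u^0 = \argmax_u R(\vec\pi_u^1,0)$. For the inductive step, I must identify the largest minimizer over $\vec\Gamma \in \mathcal{T}(\vec\Gamma^{i-1})$ of
\begin{equation*}
\frac{\mathbb{E}(R(b^{\vec\Gamma^{i-1}},a^{\vec\Gamma^{i-1}})) - \mathbb{E}(R(b^{\vec\Gamma},a^{\vec\Gamma}))}{\sum_{j=1}^K\left(\sum_{r=1}^{\Gamma_j}\alpha^{\vec\Gamma}(\vec\pi_j^r) - \sum_{r=1}^{\Gamma_j^{i-1}}\alpha^{\vec\Gamma^{i-1}}(\vec\pi_j^r)\right)}.
\end{equation*}
Restricting first to unit increments $\vec\Gamma = \vec\Gamma^{i-1}+\vec e_u$, Lemma~\ref{prop:expressionwhittle} gives the ratio value explicitly; since the coefficient of $R(\vec\pi_u^{\Gamma_u^{i-1}+1},0)$ in that formula is negative, the minimum over $u$ is attained at $u^i = \argmax_u R(\vec\pi_u^{\Gamma_u^{i-1}+1},0)$. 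To rule out multi-coordinate increments achieving a strictly smaller ratio, I realize any $\vec\Gamma \in \mathcal{T}(\vec\Gamma^{i-1})$ as a sequence of unit increments from $\vec\Gamma^{i-1}$ and express the numerator and denominator as telescoping sums of the corresponding unit-step differences. By A2 together with the greedy maximality of $u^i$, each partial unit-step ratio along this decomposition is bounded below by $W_i$, so the mediant inequality for positive ratios forces the composite ratio to also be at least $W_i$.

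Once $\vec\Gamma^i = \vec\Gamma^{i-1} + \vec e_{u^i}$ has been identified as the largest minimizer, Lemma~\ref{prop:expressionwhittle} immediately delivers the claimed closed form for $W_i$. Theorem~\ref{prop:whittle} then assigns $W(\vec\pi_j^\tau) = W_i$ exactly when $\Gamma_j^{i-1} < \tau \leq \Gamma_j^i$; since only the $u^i$-th coordinate is incremented at step $i$, this gives $W(\vec\pi_{u^i}^{\Gamma^i_{u^i}}) = W_i$, matching the corollary's formula. For the monotonicity of $W(\vec\pi_k^\tau)$ in $\tau$, Proposition~\ref{prop:indexability} ensures that a larger subsidy $W$ produces a coordinatewise larger optimal threshold $\vec\Gamma(W)$, which forces $W_0 \leq W_1 \leq W_2 \leq \cdots$ along the iterations of Theorem~\ref{prop:whittle}. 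For each fixed $k$, the values $W(\vec\pi_k^1), W(\vec\pi_k^2), \ldots$ are precisely the subsequence of $\{W_i\}$ at the indices $i$ with $u^i=k$, hence are non-decreasing in $\tau$.

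The principal technical difficulty is the combinatorial claim that multi-coordinate increments cannot strictly beat the greedy single-coordinate choice at each step. The telescoping/mediant approach sketched above is the most natural route, reducing the multi-coordinate case to iterated applications of Lemma~\ref{prop:expressionwhittle} controlled by A2. Should the mediant step prove delicate because the denominator is not purely additive in the unit increments, a fallback is a double induction: on $i$ (as above) and on the total increment size $\sum_j (\Gamma_j - \Gamma_j^{i-1})$, showing directly that the minimizer must have unit-increment structure at each step.
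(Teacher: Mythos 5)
Your overall strategy coincides with the paper's: identify the greedy sequence $\{\vec\Gamma^i\}$ of \eqref{eq:index_definition} with the largest minimizers produced by Theorem~\ref{prop:whittle}, invoke Lemma~\ref{prop:expressionwhittle} for the closed form, and read off $W(\vec\pi_{u^i}^{\Gamma^i_{u^i}})=W_i$; your base case, your single-increment comparison, and your monotonicity argument are all sound. Where you diverge is in the one step that carries the real content, namely ruling out multi-coordinate increments. The paper does not telescope: it writes an arbitrary candidate as $\vec\Gamma=\vec\Gamma^{i-1}+\vec v$, computes the ratio in closed form (expression~\eqref{eazet}), bounds each block sum $\sum_{j=\Gamma_k^{i-1}+1}^{\Gamma_k^{i-1}+v_k}R(\vec\pi_k^j,0)$ above by $v_kR(\vec\pi_k^{\Gamma_k^{i-1}+1},0)$ using A2, and then bounds the resulting $\omega$-weighted average of the marginals $R(\vec\pi_k^{\Gamma_k^{i-1}+1},0)$ by their maximum, attained at $u^i$. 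This one-shot computation sidesteps any question of how the increments are ordered.

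Your telescoping/mediant route has a genuine gap as stated: the claim that \emph{each} partial unit-step ratio along the decomposition is bounded below by $W_i$ is false for an arbitrary ordering of the unit increments. Take $K=2$, $\vec\Gamma^{i-1}=\vec 0$, $R(\vec\pi_1^1,0)=10$, $R(\vec\pi_2^1,0)=1$, so $W_0=R^1-10$. Along the path $(0,0)\to(0,1)\to(1,1)$ the second step has index
\begin{align*}
R^1+R(\vec\pi_2^1,0)\omega_2-R(\vec\pi_1^1,0)(\omega_1+2\omega_2)=W_0-9\omega_2<W_0,
\end{align*}
even though the composite ratio for $(1,1)$ is $R^1-(10\omega_1+\omega_2)\geq W_0$. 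The difficulty is therefore not the one you anticipate (the denominators do telescope cleanly, and each increment is positive, so the mediant inequality itself applies); it is that the individual step ratios need not all clear $W_i$. The repair is to fix the ordering: at each intermediate state increment, among the coordinates still short of their target, the one with the largest current marginal $R(\vec\pi_u^{\Gamma_u+1},0)$. Then every already-performed increment has marginal reward at least that of the current one (by A2 and the greedy rule), and a short computation shows each step index exceeds $W_i$. With that ordering made explicit your argument closes; without it, the mediant bound does not go through, and the paper's direct computation of \eqref{eazet} remains the cleaner route.
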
 
\begin{proof}
Let $u^i$ and $\vec\Gamma^i$ be defined as in Equation~\eqref{eq:index_definition}, and let $W_i$ be
$$
R^1+\sum_{k=1}^K\sum_{j=1}^{\Gamma_k^{i-1}}R(\vec\pi_k^j,0)\omega_k-R(\vec\pi_{u^i}^{\Gamma_{u^i}^{i-1}+1},0)\sum_{k=1}^K(\Gamma_k^{i-1}+1)\omega_k.
$$
 We aim at proving that 
\begin{align}\label{eqref:just_an_equation}
W_i\leq\frac{\mathbb{E}(R(b^{\vec\Gamma^{i-1}},a^{\vec\Gamma^{i-1}}(b^{\vec\Gamma^{i-1}})))-\mathbb{E}(R(b^{\vec\Gamma},a^{\vec\Gamma}(b^{\vec\Gamma})))}{\sum_{j=1}^{K}\left(\sum_{r=1}^{\Gamma_j}\alpha^{\vec\Gamma}(\vec\pi^r_j)-\sum_{r=1}^{\Gamma_j^{i-1}}\alpha^{\vec\Gamma^{i-1}}(\vec\pi^r_j)\right)},
\end{align}
for all $\vec\Gamma$ for which $\sum_{j=1}^K\Gamma_j>\sum_{j=1}^K\Gamma_j^{i-1}$ and $\Gamma_j\geq\Gamma_j^{i-1}$ for all $j$. Using the same arguments as those used in proof of Lemma~\ref{prop:expressionwhittle} the RHS in~\eqref{eqref:just_an_equation} simplifies to
\begin{align}\label{eazet}
\bigg(&\sum_{k=1}^K\sum_{j=1}^{\Gamma_k^{i-1}}R(\vec\pi_k^i,0)\omega_k\sum_{r=1}^Kv_r\omega_r
-\sum_{k=1}^K\sum_{j=\Gamma_k^{i-1}+1}^{\Gamma_k^{i-1}+u_k}R(\vec\pi_k^j,0)\omega_k\sum_{r=1}^K(\Gamma_r^{i-1}+1)\omega_r
+R^1\sum_{k=1}^K\omega_k\sum_{r=1}^Kv_r\omega_r\bigg)\cdot\bigg(\sum_{k=1}^Kv_k\omega_k\bigg)^{-1},
\end{align}
where we defined $\Gamma_j:=\Gamma_j^{i-1}+v_j$ with $v_j\geq0$ and $\sum_ {j=1}^Kv_j>~0$. 
 We have that
\begin{align*}
&\hbox{RHS of } \eqref{eqref:just_an_equation}\geq\eqref{eazet}\\
&\geq \sum_{k=1}^K\sum_{j=1}^{\Gamma_k^{i-1}}R(\vec\pi^i_k,0)\omega_k+R^1   -\frac{\sum_{k=1}^KR(\vec\pi_k^{\Gamma_k^{i-1}+1},0)v_k\omega_k\sum_{r=1}^K(\Gamma_r^{i-1}+1)\omega_r}{\sum_{k=1}^Kv_k\omega_k}\\
&\geq\sum_{k=1}^K\sum_{j=1}^{\Gamma_k^{i-1}}R(\vec\pi^i_k,0)\omega_k+R^1-R(\vec\pi_{u^i}^{\Gamma_{u^i}^{i-1}+1},0)\sum_{r=1}^K(\Gamma_r^{i-1}+1)\omega_r\\
&=W_i,
\end{align*}
where recall that $u^i=\argmax_{k}\{R(\vec\pi_{k}^{\Gamma_{k}^{i-1}+1},0)\}$. The second inequality follows from Assumption~A2 and the third inequality is due to the definition of $u^i$.
We have therefore proven~\eqref{eqref:just_an_equation}, which implies that  $\Gamma_j^i=\Gamma_j^{i-1}$ for all $j\neq u^i$ and $\Gamma_{u^i}^i=\Gamma_{u^i}^{i-1}+1$. By Theorem~\ref{prop:whittle}, $W(\vec\pi_j^\tau)=W_i$ for all $\Gamma_j^{i-1}<\tau\leq\Gamma_j^i$, and we have proven that if $j=u^i$ then $\Gamma_{u^i}^{i-1}<\tau\leq\Gamma_{u^i}^i=\Gamma_{u^i}^{i_1}+1$, hence $W(\vec\pi_{u^i}^{\Gamma_{u^i}^{i-1}+1})=W_i$ for all $i$, which concludes the proof.

\end{proof}

Whittle's index being non-decreasing in $\tau$ implies that, the longer a user has not been selected for channel sensing the more attractive it becomes to select him/her. The exploration vs. exploitation trade-off is therefore captured by this property of the index. 

We illustrate how Whittle's index is obtained in Figure~\ref{fig:Whittlesindex} for a particular example with $K=3$. Observe that $g^{OPT}(W)=\max_{\vec\Gamma}\{g^{\vec\Gamma}(W)\}$ is the upper envelope of  affine increasing functions in $W$. Whittle's index is therefore computed by the intersecting points of the affine functions that determine the envelope. By the indexability property we have that, for all $W<W_0$ always being active is prescribed, and for all $W>W_{I}$ always being passive is prescribed (with $I$ the iteration at which the algorithm in Theorem~3 has stopped).

\begin{remark}\label{remark:omega}
We highlight that, although in the present work we have focused on the approximation~\eqref{equation:app} (see Section~\ref{sec:relaxation}), the explicit expression of Whittle's index, as computed in Corollary~\ref{cor:whittle}, could have been obtained  using any of these following approximations. Assume $q^0(\cdot,\cdot)$ to be as in the original model and let \begin{equation}\label{eq:approxi}
q^1(\vec\pi^\tau_i,\vec\pi^{\tau'}_j)=
\begin{cases}
p^{(m)}_{ij} &\hbox{if } \tau'=1,\hbox{ and}, m \hbox{ independent of } \tau,\\
0, & \hbox{otherwise}.
\end{cases}  
\end{equation}
The expression of $\omega_j$ for all $j$ in Corollary~\ref{cor:whittle},  is the solution of the global balance equation for the Markov Chain of the approximation in Equation~\eqref{equation:app}. We note that any approximation in Equation~\eqref{eq:approxi}, shares the same solution as that of approximation~\eqref{equation:app}. Hence, Whittle's index is the same.

This latter statement does not hold for the original model though, since the transition probabilities from one channel to another are policy dependent.
\end{remark}
\begin{figure}\centering
\includegraphics[scale=0.5]{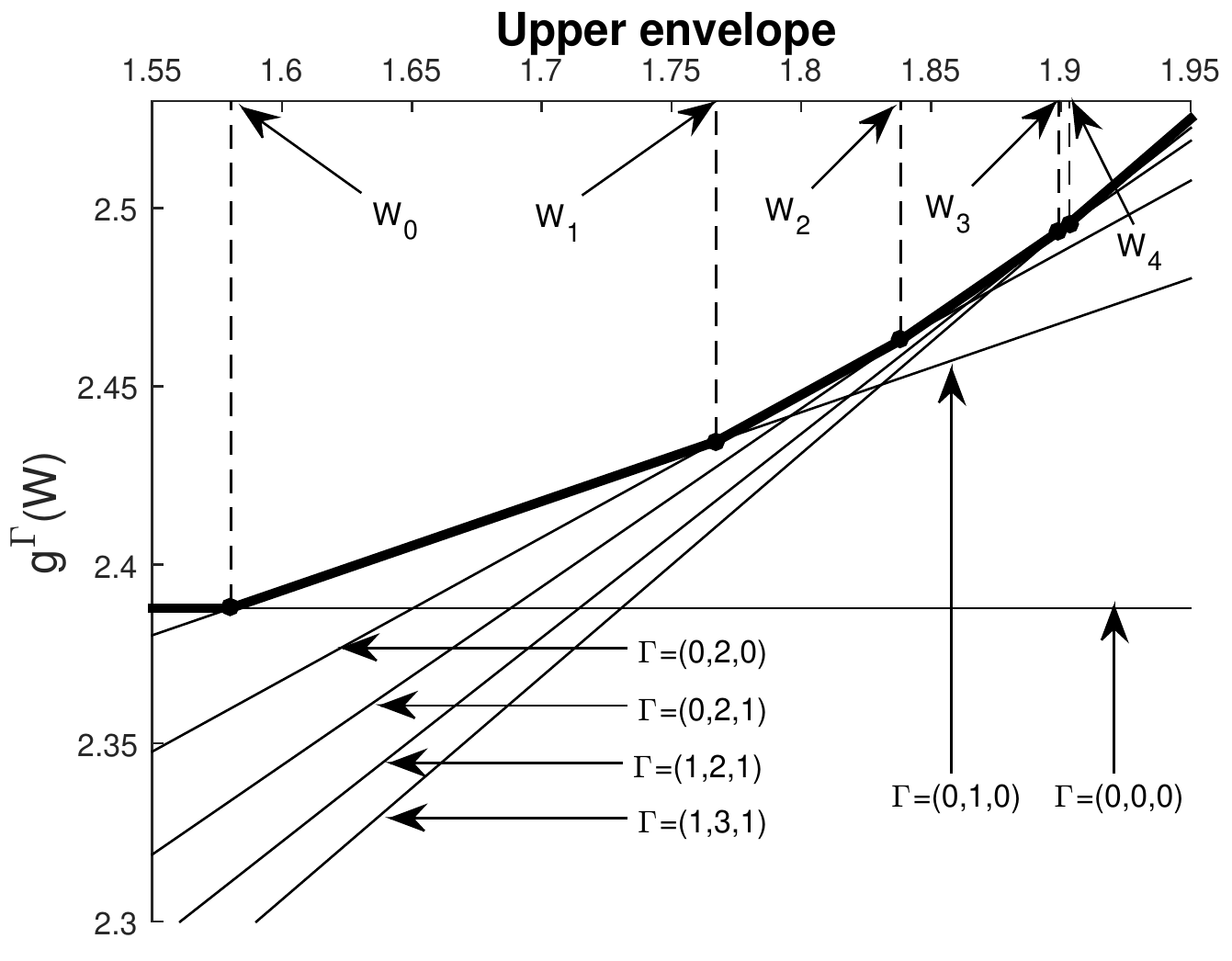}
\caption{Upper envelope, i.e., $\max_{\vec\Gamma}\{g^{\vec\Gamma}(W)\}$, for a particular example with $K=3$, doubly stochastic transition matrix, and $R(\vec\pi_j^\tau,0)=\frac{\rho_j}{3}\sum_{k=1}^3\log_2(1+SNR)$, with $\rho_j=\max_{r}\{p_{jr}^{(\tau)}\}$. Note $W(\vec\pi_2^1)=W_0, W(\vec\pi_2^2)=W_1, W(\vec\pi_3^1)=W_2, W(\vec\pi_1^1)=W_3$ and $W(\vec\pi_2^3)=W_4$. The rest of values can be obtained computing further intersection points in the upper envelope. }\label{fig:Whittlesindex}
\end{figure}
\subsection{Whittle's index policy}\label{sec:WIP}
In this section we explain how the Whittle index can be used in order to define a heuristic for the original unrelaxed problem, as in Equation~\eqref{eq:obj_func_average}. 
\begin{definition}
Assume the state of user $n$ at time $t$ to be $\vec\pi_{j_n}^{\tau_n}$. The Whittle index policy prescribes to allocate a pilot to the $M$ users with the highest $W_n(\vec\pi_{j_n}^{\tau_n})$.
\end{definition}
Whittle's index policy ($WIP$) is an optimal solution for the relaxed POMDP. It has been proven to be optimal in several asymptotic regimes. For instance, it was proven to be optimal in the many-users setting in Verloop \cite{Ver13}, Ouyang et al.~\cite{OuyangEryilShroff12}, and Weber et al. \cite{WW90}. Moreover, the  asymptotic optimality of Whitte's index in this  regime was conjectured by Whittle in the paper in which Whittle's index was first proposed \cite{Whi88}.
\section{Error estimation}\label{sec:error}
In this section we estimate the error introduced by the approximation that has been considered throughout the paper. Recall that this approximation has been adopted in order to obtain structural results of the optimal policy. The latter is due to the optimality equation of the original problem being extremely difficult to solve. In order to characterize the absolute error explicitly we first define $V_\beta^{max}$ and $V_\beta^{min}$. Let $V_\beta^{max}(\cdot)$ be the value function that satisfies the following Bellman equation
\begin{align}\label{eq:bellman_approximations}
V_\beta^{max}(\vec\pi_j^\tau)=\max\{&R(\vec\pi_j^\tau,0)+W+\beta V_\beta^{max}(\vec\pi_j^{\tau+1});\nonumber\\
&R(\vec\pi_j^\tau,1) +\beta\max_i\{V_{\beta}^{max}(\vec\pi_i^1)\}\},
\end{align}
for all $\tau$. And let $V_\beta^{min}(\cdot)$ be the value function that satisfies the following Bellman equation
\begin{align}\label{eq:bellman_approximations_min}
V_\beta^{min}(\vec\pi_j^\tau)=\max\{&R(\vec\pi_j^\tau,0)+W+\beta V_\beta^{min}(\vec\pi_j^{\tau+1});\nonumber\\
&R(\vec\pi_j^\tau,1) +\beta\min_i\{V_{\beta}^{min}(\vec\pi_i^1)\}\},
\end{align}
for all $\tau$. Let $V_\beta$ be the value function of the original discounted reward single-arm POMDP. Then the following lemma holds. The proof can be found in Appendix~\ref{app:proofvaluefunctions}.

\begin{lemma}\label{lemma:}\label{lemma:valuefunctions} Let $V_\beta^{max}(\cdot)$ be defined as in Equation~\eqref{eq:bellman_approximations} and $V_\beta^{min}(\cdot)$ as defined in Equation~\eqref{eq:bellman_approximations_min}. Then 
\begin{align*}
V_\beta^{max}(\cdot)\geq V_{\beta}(\cdot), \text{ and }
V_\beta^{min}(\cdot)\leq V_{\beta}(\cdot).
\end{align*}
\end{lemma}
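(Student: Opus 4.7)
The plan is to establish both inequalities simultaneously by a value iteration argument coupled with induction on the iterate. Introduce the Bellman operators $T^{max}$, $T$, $T^{min}$ corresponding to equations~\eqref{eq:bellman_approximations}, the original POMDP, and~\eqref{eq:bellman_approximations_min} respectively; the only difference between them is how they handle the active branch, where they use $\beta \max_i V(\vec\pi_i^1)$, $\beta \sum_i p_{ji}^{(\tau)} V(\vec\pi_i^1)$, and $\beta \min_i V(\vec\pi_i^1)$. Starting from $V_{\beta,0}^{max}(\cdot)=V_{\beta,0}(\cdot)=V_{\beta,0}^{min}(\cdot)\equiv 0$, define the iterates $V_{\beta,r+1}^{max}=T^{max}V_{\beta,r}^{max}$, $V_{\beta,r+1}=TV_{\beta,r}$, and $V_{\beta,r+1}^{min}=T^{min}V_{\beta,r}^{min}$. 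Because $R^1<\infty$ (Assumption A2) implies bounded per-stage rewards, each of these operators is a $\beta$-contraction in the sup norm on the countable belief space $\Pi$, so Banach's fixed point theorem gives uniform convergence to the respective fixed points $V_\beta^{max}$, $V_\beta$, $V_\beta^{min}$.

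Next I would show by induction on $r$ that for every $\vec\pi_j^\tau\in\Pi$,
\begin{equation*}
V_{\beta,r}^{min}(\vec\pi_j^\tau)\leq V_{\beta,r}(\vec\pi_j^\tau)\leq V_{\beta,r}^{max}(\vec\pi_j^\tau).
\end{equation*}
The base case $r=0$ is immediate. For the inductive step, consider the upper inequality; the lower one is symmetric. The passive branch of $T^{max}V_{\beta,r}^{max}$ equals $R(\vec\pi_j^\tau,0)+W+\beta V_{\beta,r}^{max}(\vec\pi_j^{\tau+1})$, which by the inductive hypothesis dominates the passive branch of $TV_{\beta,r}$. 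For the active branch, the key sandwich inequality is
\begin{equation*}
\max_i V_{\beta,r}^{max}(\vec\pi_i^1)\;\geq\;\sum_i p_{ji}^{(\tau)}V_{\beta,r}^{max}(\vec\pi_i^1)\;\geq\;\sum_i p_{ji}^{(\tau)}V_{\beta,r}(\vec\pi_i^1),
\end{equation*}
where the first step uses that a convex combination never exceeds the maximum entry and the second uses the inductive hypothesis. Thus the active branch of $T^{max}V_{\beta,r}^{max}$ also dominates that of $TV_{\beta,r}$, and since taking the maximum of two branches is monotone, $V_{\beta,r+1}^{max}\geq V_{\beta,r+1}$ pointwise. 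The symmetric argument for $V_{\beta,r}^{min}$ uses $\min_i V_{\beta,r}^{min}(\vec\pi_i^1)\leq\sum_i p_{ji}^{(\tau)}V_{\beta,r}^{min}(\vec\pi_i^1)$.

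Finally, pointwise convergence preserves inequalities, so letting $r\to\infty$ yields $V_\beta^{min}(\cdot)\leq V_\beta(\cdot)\leq V_\beta^{max}(\cdot)$, which is the claim. No genuine obstacle is expected here: the argument is a standard monotonicity-of-the-Bellman-operator sandwich, and the only mild technical point is verifying that value iteration converges on the countable belief space, which is handled by the boundedness of $R$ and the $\beta$-contraction property inherited from the discount factor.
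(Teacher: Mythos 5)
Your proposal is correct and follows essentially the same route as the paper: value iteration from the zero function, induction on the iterate using the two-step bound $\sum_i p_{ji}^{(\tau)}V_{\beta,r}(\vec\pi_i^1)\leq\sum_i p_{ji}^{(\tau)}V_{\beta,r}^{max}(\vec\pi_i^1)\leq\max_i V_{\beta,r}^{max}(\vec\pi_i^1)$ (and its symmetric counterpart for the min), followed by passing to the limit. The only cosmetic difference is that you invoke monotonicity of the max over branches directly, whereas the paper enumerates the cases of which branch attains the maximum on each side; your added remarks on the $\beta$-contraction justifying convergence make explicit a point the paper leaves implicit.
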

We define $g^{max}(W)=\lim_{\beta\to1}(1-\beta)V_\beta^{max}(\cdot)$ and $g^{min}(W)=\lim_{\beta\to1}(1-\beta)V_\beta^{min}(\cdot)$. Then the following proposition holds. The proof can be found in Appendix~\ref{prop:proof_performancebounds}.
\begin{proposition}\label{prop:performance_bounds} Let $g(W)$ be the optimal average reward for the  relaxed POMDP and $g^{app}(W)$ be the optimal average reward for the approximation in Equation~\eqref{equation:app}.  Then the relative error of the approximation is bounded as follows
\begin{align*}
\bigg|1-\frac{g^{app}(W)}{g(W)}\bigg|\leq D(W),
\end{align*}
where 
$$
D(W):=\max\left\{1-\frac{g^{app}(W)}{g^{max}(W)},\frac{g^{app}(W)}{g^{min}(W)}-1\right\}.
$$ 
The expression of $D(W)$ can be found in Appendix~\ref{appendix:explicit_D}.
\end{proposition}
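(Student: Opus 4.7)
The plan is to turn Lemma~\ref{lemma:valuefunctions} into an inequality for the average-reward limits and then do a two-case analysis on the sign of $g^{app}(W)-g(W)$.

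First, I would argue that for each fixed $W$ the same limiting procedure used earlier for $V_\beta^{app}$ (the verification of Puterman's conditions 8.10.1--8.10.4' in Appendix~\ref{append:discountedLimit}) applies to the modified Bellman equations~\eqref{eq:bellman_approximations} and~\eqref{eq:bellman_approximations_min}: both right-hand sides are max's of two terms, one of which matches the passive transition of the original model and the other of which replaces the post-sensing belief by a fixed point (the maximum, resp.\ minimum over $i$ of $V_\beta^{max}(\vec\pi_i^1)$, resp.\ $V_\beta^{min}(\vec\pi_i^1)$). Because the immediate rewards are bounded by Assumption~A2, the value iteration operators are contractions and the Blackwell/Tauberian-type limit $g^{max}(W)=\lim_{\beta\to 1}(1-\beta)V_\beta^{max}(\cdot)$ and $g^{min}(W)=\lim_{\beta\to 1}(1-\beta)V_\beta^{min}(\cdot)$ exist and are independent of the initial state. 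Multiplying the inequalities of Lemma~\ref{lemma:valuefunctions} by $(1-\beta)$ and letting $\beta\to 1$ then gives
\begin{equation*}
g^{min}(W)\;\le\;g(W)\;\le\;g^{max}(W).
\end{equation*}

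Next I would split into two cases according to whether the approximation over- or under-estimates the true value. If $g^{app}(W)\ge g(W)$, then using $g(W)\ge g^{min}(W)>0$ (positivity follows from $R^1>0$ being achievable under the always-active policy, hence all three averages are strictly positive for any finite $W$ with the convention that $W$ is a subsidy),
\begin{equation*}
\left|1-\frac{g^{app}(W)}{g(W)}\right|=\frac{g^{app}(W)}{g(W)}-1\;\le\;\frac{g^{app}(W)}{g^{min}(W)}-1.
\end{equation*}
If instead $g^{app}(W)<g(W)$, then using $g(W)\le g^{max}(W)$,
\begin{equation*}
\left|1-\frac{g^{app}(W)}{g(W)}\right|=1-\frac{g^{app}(W)}{g(W)}\;\le\;1-\frac{g^{app}(W)}{g^{max}(W)}.
\end{equation*}
Taking the maximum of the two bounds, which is the stated $D(W)$, concludes the argument.

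The main obstacle is the limit step: one has to be careful that the $\max_i$ and $\min_i$ operations in~\eqref{eq:bellman_approximations} and~\eqref{eq:bellman_approximations_min} do not break the structural monotonicity needed to invoke the discounted-to-average reduction of~\cite[Th.~8.10.7]{Puterman2005}. Concretely, I would establish that the value-iteration sequences for $V_\beta^{max}$ and $V_\beta^{min}$ remain non-increasing in $\tau$ component-wise (the same induction as in Theorem~\ref{prop:threshold_average} goes through because replacing $\sum_k p_k^s V(\vec\pi_k^1)$ by a constant $\max_i V(\vec\pi_i^1)$ or $\min_i V(\vec\pi_i^1)$ does not affect the monotonicity step, which only needed the passive-branch comparison $R(\vec\pi_j^\tau,0)+W+V(\vec\pi_j^{\tau+1})\ge R(\vec\pi_j^{\tau+1},0)+W+V(\vec\pi_j^{\tau+2})$). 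Once that monotonicity is in hand, the long-run averages are well-defined and the two case-by-case estimates above close the proof; the explicit form of $D(W)$ promised for Appendix~\ref{appendix:explicit_D} is obtained by writing out $g^{max}(W)$ and $g^{min}(W)$ under their optimal threshold policies, which is a routine substitution from the analysis in Section~\ref{sec:index}.
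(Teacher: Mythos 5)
Your proposal is correct and follows essentially the same route as the paper: both pass from Lemma~\ref{lemma:valuefunctions} to the sandwich $g^{min}(W)\le g(W)\le g^{max}(W)$ via the limit $\lim_{\beta\to 1}(1-\beta)V_\beta$, and then read off the relative-error bound. Your explicit two-case analysis on the sign of $g^{app}(W)-g(W)$ (with the positivity remark) simply spells out the final step that the paper leaves implicit, and your comments on verifying Puterman's conditions for $V_\beta^{max}$ and $V_\beta^{min}$ match what the paper defers to Appendix~\ref{appendix:explicit_D}.
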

%Although the bound provided by $D(W)$ is not tight, 
Proposition~\ref{prop:performance_bounds} provides an error measure to estimate how good the approximation that has been considered is. Through extensive numerical experiments it has been observed that the error incurred by the approximation is extremely small, see Section~\ref{sec:approxnumerics} for some case studies.
%To illustrate the latter we provide a numerical example in Table~\ref{tab_D(W)}. The transition probability matrix
  %\begin{align*}
%\begin{bmatrix}
 %0.23&0.4&0.37\\
%0.18&0.2&0.62\\
%0.59&0.4&0.01
%\end{bmatrix},
%\end{align*}
%has been considered.
%\begin{table}\centering\caption{Relative error of the approximation}\label{tab_D(W)}
%\begin{tabular}{c|c|c}
 %   $W$  & Rel. err. App. (\%) & D(W) \\
%		\hline
 %$0.95$ & 0.2372 &  7.3886 \\
%\hline
%$0.9$  &  0.2063 & 7.3455  \\
%\hline
%$0.875$ & 0.2162 & 7.3706 \\
%\end{tabular}
%\end{table}
%AA = [0.23,0.4,0.37;0.18,0.2,0.62;0.59,0.4,0.01];

% relative_error1 =   0.2372
% relative_error2 =   1.9165
% relative_error3 =   7.3886
% W = 0.95

% relative_error1 = 0.2063
% relative_error2 = 2.2325
% relative_error3 = 7.3455
% W = 0.9

% relative_error1 = 0.2162
% relative_error2 = 2.2928
% relative_error3 = 7.3706
% W = 0.875

\begin{remark} We note that the approximation introduced in Section~\ref{sec:relaxation} differs from the original model only when the active action is considered. In the case in which the transition probabilities are the steady-state probabilities the error provided by the approximation is zero. The latter suggests that the closer the transition probabilities are from the steady-state probabilities the smaller the error will be.% see Table~{table-errors} where we observe the relative error of the approximation to be almost 0 for two examples with probability transition matrices $E^1$ and $E^2$:
 \iffalse\begin{align*}
E^1=\begin{bmatrix}
0.3 & 0.35 & 0.35\\
0.32 & 0.31 & 0.37\\
0.38 & 0.34 & 0.28
\end{bmatrix} \text{ and } E^2=\begin{bmatrix}
0.3 & 0.35 & 0.35\\
0.32 & 0.31 & 0.37\\
0.33 & 0.34 & 0.33
\end{bmatrix},
\end{align*}
with $E^1$ doubly stochastic.\fi
\end{remark}
\iffalse
\begin{table}\centering\caption{Relative error of the approximation }\label{table-errors}
\begin{tabular}{c|c|c}
    transition matrix  & Rel. err. App. (\%) & D(W) \\
		\hline
 $(E^1)$, $W=1.5$ & 0.0018 & 1.8646  \\
\hline
 $(E^2)$, $W=1.4$ & 0.00647 & 0.35817\\

% Channel_vector = [-0.1765 - 2.3299i,   0.7914 - 1.4491i,  -1.3320 + 0.3335i];
%% A = [0.3,0.35,0.35;0.32,0.31,0.37;0.38,0.34,0.28];
% W = 1.5
% relative_error1 = 0.0018
% relative_error2 = 1.8646
% relative_error3 = 1.8646

% Channel_vector = [-0.1765 - 2.3299i,   0.7914 - 1.4491i,  -1.3320 + 0.3335i];
% W = 1.4
% A = [0.3,0.35,0.35;0.32,0.31,0.37;0.33,0.34,0.33];
% relative_error1 = 0.00647
% relative_error2 = 0.35817
% relative_error3 = 0.35817
\end{tabular}
\end{table}\fi
\section{Asymptotic optimality in the many users setting}\label{asymptotic}

In this section we prove that the Whittle index policy  is asymptotically optimal in the \emph{many users} setting. We define the many users setting as follows. We assume a downlink scheduling problem with a population of $N$ users and we aim at obtaining a policy $\phi\in\mathcal{U}$ such that
\begin{align}\label{eq:objective_manyusers}
R^{N,\phi}:=\liminf_{T\to\infty}\frac{1}{T}\mathbb{E}\left(\sum_{t=1}^T\sum_{n=1}^NR_n(X_n(t)\vec b_n^\phi(t),a_n^\phi(t))\right),
\end{align}
is maximized subject to
\begin{align}\label{constraint:manyusers}
\sum_{n=1}^Na_n^\phi(t)\leq \lambda N,
\end{align}
for each time slot, where $\mathcal{U}$ is the set of policies that satisfy constraint~\eqref{constraint:manyusers} and $0\leq\lambda\leq 1$. That is, the greater the population of users in the system is, the greater the available number of pilots is (i.e., greater number of users can be selected for channel sensing).  We now introduce the relaxed version of problem \eqref{eq:objective_manyusers}-\eqref{constraint:manyusers}, namely, find $\phi\in\mathcal{U}^{REL}$ that maximizes 
\begin{align}\label{eq:objective_manyusers_relaxed}
\liminf_{T\to\infty}\frac{1}{T}\mathbb{E}\left(\sum_{t=1}^T\sum_{n=1}^NR_n(X_n(t)\vec b_n^\phi(t),a_n^\phi(t))\right),
\end{align}
 subject to
\begin{align}\label{constraint:manyusers_relaxed}
\liminf_{T\to\infty}\frac{1}{T}\mathbb{E}\left(\sum_{t=1}^T\sum_{n=1}^Na_n^\phi(t)\right)\leq \lambda N,
\end{align}
where $\mathcal{U}^{REL}$ is the set of policies that satisfy constraint~\eqref{constraint:manyusers_relaxed}. In particular we have $\mathcal{U}\subset\mathcal{U}^{REL}$.

\iffalse
We define
\begin{align*}
R^{N,\phi}(W):=\liminf_{T\to\infty}\frac{1}{T}\mathbb{E}\bigg(&\sum_{t=1}^T\sum_{n=1}^NR_n(X_n(t)\vec b_n^\phi(t),a_n^\phi(t))\\
&+W(\lambda N- \sum_{n=1}^Na_n^\phi(t))\bigg),
\end{align*}
 and we let $REL(W)$ denote an optimal solution of the relaxed problem for a fixed $W$, i.e., $R^{N,REL(W)}(W):=\max_{\phi\in\mathcal{U}^{REL}}R^{N,\phi}(W)$. 
\fi
Next we characterize the optimal relaxed policy.
 
\noindent
 {\it Optimal relaxed policy (REL):} There exist $W^*$ and $\rho\in(0,1]$ such that, the policy that prescribes to allocate a pilot to all users $n$ having $W_n(\vec\pi_{n,j}^\tau)>W^*$, and to all users $n$ having $W_n(\vec\pi_{n,j}^\tau)=W^*$ with probability $\rho$ is optimal
 for problem \eqref{eq:objective_manyusers_relaxed}-\eqref{constraint:manyusers_relaxed}. Moreover, constraint \eqref{constraint:manyusers_relaxed} is satisfied with equality. We refer to this policy by $REL$.

Recall that the policy $WIP$, is such that the $\lambda N$ users with the largest Whittle's index are allocated with a pilot. We therefore have
 \begin{align}\label{asympt_inequality}
R^{N,WIP}\leq R^{N,OPT}\leq R^{N,REL},
 \end{align}
 with $R^{N,OPT}:=\max_{\phi\in\mathcal{U}}R^{N,\phi}$. 

In this section, we aim at establishing that as $N$ tends to infinity the optimal solution of the relaxed problem \eqref{eq:objective_manyusers_relaxed}-\eqref{constraint:manyusers_relaxed}, i.e., $R^{N,REL}$, is asymptotically equivalent to the optimal solution of problem \eqref{eq:objective_manyusers}-\eqref{constraint:manyusers}, i.e., $R^{N,OPT}$. We further prove that, under some assumption, $R^{N,WIP}$ as $N\to\infty$ converges to the optimal solution of the relaxed problem, and is hence an asymptotically optimal solution for problem \eqref{eq:objective_manyusers}-\eqref{constraint:manyusers}. 
 
The asymptotic optimality result obtained below, which considers the pilot allocation problem with $K$-state Markov Chain channels, is a generalization of the result obtained in Ouyang et al.~\cite{OuyangEryilShroff12} for the Gilbert-Elliot model (two-state Markov Chain model). In this paper we follow the same line of arguments that has been used there. We prove the intermediate results (required to show Propositions~1 and~2 in Ouyang et al.~\cite{OuyangEryilShroff12}) that fail to easily extend to our scenario, and we refer to \cite{OuyangEryilShroff12} for the proofs of the lemmas that extend to our case without much effort.

Note that, due to Inequality~\eqref{asympt_inequality}, to prove asymptotic optimality of $WIP$ it suffices to show that as $N$ tends to~$\infty$ $R^{N,REL}$ and $R^{N,WIP}$ are asymptotically equivalent. We will therefore focus on proving the latter. %As done in Ouyang et al.~\cite{OuyangEryilShroff12}.

The idea for the proof is as follows. Firstly, we define the state of the system to be the proportion of users in all possible channel belief states. We define a \emph{fluid approximation} of this system under $WIP$, by characterizing the evolution of it through a set of linear differential equations. We prove the fluid system to have a single fixed point solution (the equilibrium distribution under $REL$).
Secondly, we establish a \emph{local optimality} result, which states that as $N\to\infty$, $R^{N,WIP}$ and $R^{N,REL}$ are asymptotically equivalent if the initial state (i.e., initial configuration of users) is in the neighborhood of the equilibrium distribution under $REL$.  Finally, we prove \emph{global convergence}, by showing that, under an assumption that can be numerically verified, as $N\to\infty$, $R^{N,WIP}$ and $R^{N,REL}$ are  asymptotically equivalent for any possible initial state.

\subsection{Fluid approximation under $WIP$}

In this section we characterize the fluid system under Whittle's index policy.  For sake of clarity, two technical assumptions are made next.
\begin{itemize}
\item  We assume that there are two different classes of users. Moreover, we denote the channel transition matrix of users that belong to class~1 by $P^1=(p_{ij}^1)_{i,j\in\{1,\ldots,K\}}$ and that of the users that belong to class~2 by $P^2=(p_{ij}^2)_{i,j\in\{1,\ldots,K\}}$. %Also, denote by $\vec\pi^{s,c}=(p_1^{s,c},\ldots,p_K^{s,c})$ the steady-state belief vector of class~$c$ users for $c\in\{1,2\}$.

Due to the latter assumption, belief state vectors will be denoted as $\vec\pi^{\tau,c}_j$ and Wittle's index in $\vec\pi^{\tau,c}_j$ as $W(\vec\pi^{\tau,c}_j)$ for class-$c$ users, with $c\in\{1,2\}$. Namely, we replace the user dependency (e.g., $W_n(\cdot)$ or $\vec\pi^{\tau}_{n,j}$) by class dependency in the notation.
\item We assume a  truncated belief state space, i.e., we define the state space as follows:
\begin{align*}
\overline\Pi_c=&\{\vec\pi_{j}^{\tau,c}:\vec\pi_{j}^{\tau,c}=\vec e_j(P^{c})^{\tau},0<\tau\leq \overline\tau, 
 j\in\{1,\ldots,K\}\} \cup \{\vec\pi^{s,c}\}
\end{align*}
for all $c\in\{1,2\}$. If the truncation parameter $\overline\tau$ is large enough, then $\vec\pi_{j}^{\overline\tau,c}$, the belief vector for a class-$c$ user, is very close to the steady-state belief vector $\vec\pi^{s,c}=(p_1^{s,c},\ldots,p_K^{s,c})$. Motivated by the latter, we assume that in the truncated system, the passive transition probability from belief state $\vec\pi_{j}^{\overline\tau,c}$ to $\vec\pi^{s,c}$ for a class-$c$ user equals 1, i.e., $q^{0,\overline\tau}(\vec\pi_{j}^{\overline\tau,c},\vec\pi^{s,c})=1$ for all $j$.
\end{itemize}
	%Due to the assumption A1, the belief state of a user, when not being allocated with a pilot, converges to the steady-state belief vector. Motivated by the latter, we consider a truncated state space, and we denote the truncation parameter $\tau$. 
 
 Now we define the state space over which the optimality result will be established. Let us define $\mathbf{Y}^N$ the proportion of users in each belief value, that is,
 $\mathbf{Y}^N=[\mathbf{Y}^{1,N},\mathbf{Y}^{2,N}]$, where
\begin{align*}
\mathbf{Y}^{c,N}=[Y_{1,1}^{c,N},\ldots,Y_{1,\overline\tau}^{c,N},\ldots,Y_{K,1}^{c,N},\ldots,Y_{K,\overline\tau}^{c,N},Y_{s}^{c,N}],
\end{align*}
for $c\in\{1,2\}$. To this extent, $Y_{i,j}^{c,N}$ represents the proportion of class-$c$ users in belief state $\vec\pi_{i}^{j,c}$, and $Y_s^{c,N}$ represents the proportion of class-$c$ users in the steady-state belief vector, i.e., $\vec\pi^{s,c}$. Let $\delta_c$ denote the fraction of users that belong to class~$c$, then the state space of this system is defined as 
$$
\mathcal{Y}=\{\mathbf{Y}^N: Y^{c,N}_s+\sum_{i=1}^K\sum_{j=1}^{\overline\tau} Y_{i,j}^{c,N}=\delta_c, c\in\{1,2\}\}.
$$ 
To avoid analyzing  well understood scenarios we will make the following assumption. 
\begin{assumption}\label{assumptionWhittle}
We assume that $W(\vec\pi^{s,1}), W(\vec\pi^{s,2}) \geq W^*$ for all class-$1$ users and all  class-$2$ users.
\end{assumption}  
Due to Whittle's index being non-decreasing, we note that if $W(\vec\pi^{s,1})\leq W$ and $W(\vec\pi^{s,2})\leq W$, then $REL$ reduces to not allocating any pilot to any user, that is $\lambda N=0$. Since $WIP$ prescribes to allocate pilots to $\lambda N$ users with the greatest Whittle's index, and $\lambda N=0$, $WIP$ reduces to $REL$ and is hence optimal. Moreover, if $W(\vec\pi^{s,c})\leq W \leq W(\vec\pi^{s,c'})$ for $c\neq c'\in\{1,2\}$, then the system reduces to a single class problem, since one of the classes will never be allocated with a pilot. We therefore focus on the case in which $W(\vec\pi^{s,1}), W(\vec\pi^{s,2}) \geq W^*$ (Assumption~\ref{assumptionWhittle}).

We are now in position to define the fluid system. We adopt the following notation. Let $b_i$ represent the belief value that corresponds to the $i^{th}$ entry in $\mathbf{Y}^N(t)$, and $W_i$ refer to the Whittle's index in belief state $b_i$, e.g., $b_1$ corresponds to $\vec\pi_{1}^{1,1}$ and $W_1$ to $W(\vec\pi_{1}^{1,1})$. Let us denote by $q_{ij}(\mathbf{y})$ the probability that the belief value of the channel  jumps from belief value $b_i$ to $b_j$ given that the systems state is $\mathbf{y}\in\mathcal{Y}$. Then
\begin{align}\label{eq:probs}
q_{ij}(\mathbf{y})=g_i(\mathbf{y})q_{ij}^1+(1-g_i(\mathbf{y}))q_{ij}^0,
\end{align}
where $g_i(\mathbf{y})$ corresponds to the fraction of users in belief value $b_i$ that are activated by $WIP$ and $q_{ij}^a$ for $a=0,1$, is the probability that the belief value transits from $b_i$ to $b_j$ under action $a$, i.e., $q^a(b_i,b_j)$. The explicit expressions of $g_i(\mathbf{y})$ and $q_{ij}^a$ for $a\in\{0,1\}$ are given in Table~I.  In the case in which $y_i\neq0$, only a fraction of the users in belief value $b_i$ will be activated, exactly the amount that is required for constraint~\eqref{constraint:manyusers} to be binding.
\begin{table*}
\caption{Transition probabilities from belief value $b_i$ to $b_j$}
\centering
\begin{minipage}{0.85\textwidth}
\begin{align*}
g_i(\mathbf{y})&=
\begin{cases}
\min\left\{\left[\frac{\lambda-\sum_{j:W_j>W_i}y_j}{y_i}\right]^+,1\right\}, & \hbox{if } y_i\neq 0,\\
1, &\hbox{if } y_i=0, \hbox{ and } \lambda>\sum_{j:W_j>W_i}y_j,\\
0, &\hbox{if } y_i=0, \hbox{ and } \lambda\leq\sum_{j:W_j>W_i}y_j,
\end{cases}\\%\label{eq:active_fraction}\\
%\end{align}
\medskip
%\begin{align}\label{q_active_app}
q_{ij}^1&= 
\begin{cases}
p_{r}^{s,1}, &\hbox{if } j=(r-1)\overline\tau+1, \hbox{ and } (r-1)\overline\tau+1\leq i\leq r\overline\tau , r=1,\ldots,K, \hbox{ or } i=K\overline\tau+1\\
p_{r}^{s,2}, &\hbox{if } j=(K+r-1)\overline\tau+2, \hbox{ and } (K+r-1)\overline\tau+2\leq i\leq (K+r)\overline\tau+1 , r=1,\ldots,K, \hbox{ or } i=2K\overline\tau+2,\\
0, &\hbox{otherwise}, 
\end{cases}\\%\label{q_active_app}\\
%\end{align}
\medskip
%\begin{align}\label{passive_transitions}
q_{ij}^0&= 
\begin{cases}
1, &\hbox{if } j=i+1, \hbox{ and } i\neq \overline\tau, 2\overline\tau, \ldots, (K-1)\overline\tau, K\overline\tau+1,  (K+1)\overline\tau+1,\ldots, (2K-1)\overline\tau+1,2K\overline\tau+2,\\
1, & j=K\overline\tau+1, \hbox{ and } i=\overline\tau,\ldots,(K-1)\overline\tau,K\overline\tau+1,\\
1, &\hbox{if } j=2K\overline\tau+2, \hbox{ and } i=(K+1)\overline\tau+1,\ldots,(2K-1)\overline\tau+1,2K\overline\tau+2,\\
0, &\hbox{otherwise}. 
\end{cases}%\label{passive_transitions}
%\end{align}
%\medskip
%\begin{align}\label{q_active_app2}
%q_{ij}^1&= 
%\begin{cases}
%p_{r,u}^{1}, &\hbox{if } j=u, \hbox{ with } u\in\{1,\tau+1,\ldots,(K-1)\tau+1\} \hbox{ and } i\in\{(r-1)\tau+1,\ldots,r\tau\}, r\leq K,\\
%p_{r,u}^{2}, &\hbox{if } j=u, \hbox{ with } u\in\{K\tau+2,(K+1)\tau+2,\ldots,(2K-1)\tau+2\} \hbox{ and } i\in\{(r-1)\tau+2,\ldots,r\tau+1\}, r> K,\\
%p_{r}^{s,1}, &\hbox{if } j=(r-1)\tau+1, \hbox{ and } i=K\tau+1,\\
%p_{r}^{s,2}, &\hbox{if } j=(K+r-1)\tau+2, \hbox{ and } i=2K\tau+2,\\
%0, &\hbox{otherwise}, 
%\end{cases}\label{q_active_app2}
\end{align*}
\hrule
\end{minipage}
\end{table*}

We next define the expected drift of $\mathbf{Y}^N(t)$ to be
\begin{align*}
D\mathbf{Y}^N(t):=\mathbb{E}(\mathbf{Y}^N(t+1)-\mathbf{Y}^N(t)|\mathbf{Y}^N(t)),
\end{align*}
hence
\begin{align}\label{theexprecteddrift}
D\mathbf{Y}^N(t)\bigg|_{\mathbf{Y}^N(t)=\mathbf{y}}=\sum_{i=}\sum_{j=}q_{ij}(\mathbf{y})y_i\cdot\vec e_{ij}=Q(\mathbf{y})\mathbf{y},
\end{align}
where $\vec e_{ij}=(0,\ldots,0,\overbrace{-1}^\text{$i^{th}$},0,\ldots,0, \overbrace{1}^\text{$j^{th}$},0,\ldots,0)$, that is, it is the $2(K\overline \tau+1)$ dimensional vector that has $-1$ in its $i^{th}$ entry and $1$ in its $j^{th}$ entry, also we define $\vec e_{ii}=(0,\ldots,0)$. Moreover,
\begin{align*}
Q_{i,j}(\mathbf{y})=
\begin{cases}
-\sum_{j\neq i}q_{ij}(\mathbf{y}), & \hbox{if } i=j,\\
q_{ji}(\mathbf{y}), &\hbox{if } i\neq j.
\end{cases}
\end{align*}
The latter equation allows the system to be interpreted as a \emph{fluid system}, only taking the expected direction of the system into account, note that~\eqref{theexprecteddrift} is also defined for $\mathbf{y}\notin\mathcal{Y}$, and $Q(\mathbf{y}(t))\mathbf{y}(t)$ does not depend on $N$. Therefore we represent the expected change of a \emph{fluid system in discrete time} as follows
\begin{align}\label{fluid_sys}
\mathbf{y}(t+1)-\mathbf{y}(t)=Q(\mathbf{y}(t))\mathbf{y}(t).
\end{align}

Let $\overline{Y}_{W^*}=\{\mathbf{y}\in\mathcal{Y}:\sum_{j:W_j>W^*}y_j<\lambda, \sum_{j:W_j\geq W^*}y_j\geq\lambda\}$, that is, the set of states in which all users with Whittle's index higher than $W^*$ are activated, users with Whittle's index smaller than $W^*$ are passive, and users for which Whittle's index equals $W^*$ are activated with randomization parameter $\rho$. In the next lemma we show that the fluid system in Equation~\eqref{fluid_sys} under $WIP$ is linear in $\mathbf{y}(t)\in\overline{Y}_{W^*}$. The proof can be found in Appendix~\ref{proof_linear_fluid}.
\begin{lemma} \label{eq:linear_fluid_sys} 
For all $\mathbf{y}(t)\in\overline{Y}_{W^*}$, the fluid system~\eqref{fluid_sys} is linear. That is,
there exist $\overline Q$ and $\overline d$ such that
\begin{align}\label{eq_fluid_lin}
\mathbf{y}(t+1)-\mathbf{y}(t)=\overline Q\cdot \mathbf{y}(t)+\overline d, 
\end{align}
for all $\mathbf{y}(t)\in\overline Y_{W^*}$.
\end{lemma}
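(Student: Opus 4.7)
The drift computed in \eqref{theexprecteddrift} can be rewritten, using \eqref{eq:probs}, as
\begin{align*}
Q(\mathbf{y})\mathbf{y} \;=\; \sum_{i,j} \Bigl( g_i(\mathbf{y})\,y_i\,(q_{ij}^1 - q_{ij}^0) + y_i\, q_{ij}^0 \Bigr)\vec e_{ij},
\end{align*}
so the entire nonlinear dependence on $\mathbf{y}$ is carried by the terms $g_i(\mathbf{y})\,y_i$; the rest is already linear in $\mathbf{y}$ with constants $q_{ij}^0$ and $q_{ij}^1$ taken from Table~I. My plan is therefore to show that on the set $\overline Y_{W^*}$ the map $\mathbf{y}\mapsto g_i(\mathbf{y})y_i$ is affine (in fact either linear or affine with a single constant term) for every coordinate $i$, after which the lemma follows by collecting all contributions into a single matrix $\overline Q$ and a single vector $\overline d$.

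First, I would split the belief values $b_i$ into three groups according to the ordering of $W_i$ with respect to the threshold $W^*$ from the definition of $REL$. Using that Whittle's index $W(\vec\pi_k^\tau)$ is non-decreasing in $\tau$ (Corollary~\ref{cor:whittle}), the set $\{j: W_j > W_i\}$ depends only on the indexing of states and not on $\mathbf{y}$. For each case I would invoke the defining inequalities $\sum_{j:W_j>W^*}y_j<\lambda$ and $\sum_{j:W_j\ge W^*}y_j\ge\lambda$ of $\overline Y_{W^*}$ and the explicit formula for $g_i$ in Table~I:
\begin{itemize}
\item If $W_i>W^*$, then $\sum_{j:W_j\ge W_i}y_j\le\sum_{j:W_j>W^*}y_j<\lambda$, so $\lambda-\sum_{j:W_j>W_i}y_j\ge y_i$, giving $g_i(\mathbf{y})=1$ when $y_i\neq 0$ and also $g_i(\mathbf{y})=1$ when $y_i=0$. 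In both cases $g_i(\mathbf{y})\,y_i=y_i$, which is linear.
\item If $W_i<W^*$, then $\sum_{j:W_j>W_i}y_j\ge\sum_{j:W_j\ge W^*}y_j\ge\lambda$, so $g_i(\mathbf{y})=0$ in both sub-cases of Table~I, yielding $g_i(\mathbf{y})\,y_i=0$.
\item If $W_i=W^*$ and $y_i\neq 0$, the numerator $\lambda-\sum_{j:W_j>W^*}y_j$ is strictly positive and no larger than $y_i$ (by the definition of $\overline Y_{W^*}$ and the identity $\sum_{j:W_j\ge W^*}y_j\ge\lambda$), so the $\min$ and positive-part operators in $g_i$ are inactive and
$$g_i(\mathbf{y})\,y_i \;=\; \lambda - \sum_{j:W_j>W^*} y_j,$$
which is affine in $\mathbf{y}$. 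The degenerate case $y_i=0$ is irrelevant because the product $g_i(\mathbf{y})\,y_i$ vanishes anyway.
\end{itemize}

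Once these three cases are established, every coordinate of the drift $Q(\mathbf{y})\mathbf{y}$ is an affine function of $\mathbf{y}$. I would then assemble $\overline Q$ by collecting the linear coefficients coming from $y_i(q_{ij}^1-q_{ij}^0)$, $y_i q_{ij}^0$, and the term $-(\sum_{j:W_j>W^*}y_j)(q_{ij}^1-q_{ij}^0)$ arising from the $W_i=W^*$ group; the constant vector $\overline d$ collects only the contributions $\lambda(q_{ij}^1-q_{ij}^0)\vec e_{ij}$ from the boundary states $b_i$ with $W_i=W^*$. Substituting into \eqref{fluid_sys} then gives~\eqref{eq_fluid_lin}.

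The delicate step is the third case: one must verify simultaneously that the $\min$ and the positive part in Table~I are both inactive on $\overline Y_{W^*}$, so that $g_i$ truly reduces to the affine expression and not to $1$ or $0$. This is where the two-sided inequalities defining $\overline Y_{W^*}$ are used in an essential way, and it is the only place in the proof where the region of validity matters; everywhere else the argument is bookkeeping.
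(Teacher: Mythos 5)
Your proposal is correct and follows essentially the same route as the paper's proof: isolate the nonlinearity in the products $g_i(\mathbf{y})y_i$, observe that $g_i$ is constantly $0$ or $1$ for states with $W_i\neq W^*$ on $\overline Y_{W^*}$, and show that at the boundary state the product collapses to the affine expression $\lambda-\sum_{j:W_j>W^*}y_j$. The only remark worth adding is that your third case implicitly relies on there being a \emph{unique} state with index exactly $W^*$ (otherwise $\sum_{j:W_j\ge W^*}y_j\ge\lambda$ does not bound each individual $y_i$ from below), which is precisely the standing assumption the paper makes at the start of its proof via the choice of $\ell_1^*$.
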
  

In Lemma~\ref{lema:hola}, we characterize the unique fix point solution of the linear fluid system of Lemma~\ref{eq:linear_fluid_sys}, the proof can be found in Appendix~\ref{append_fixpoint}. To do so we first introduce the following definition.
\begin{definition}\label{def..} Let $\theta_{\delta,\lambda}:=\mathbb{E}[\mathbf{Y}^{N,\infty}]$, where $\mathbf{Y}^{N,\infty}$ is such that, under the $REL$ policy, the system state $\mathbf{Y}^N(t)$ converges in distribution to $\mathbf{Y}^{N,\infty}$.
\end{definition}
\begin{lemma}\label{lema:hola}  The linear fluid system given by Equation~\eqref{eq_fluid_lin} equals 0, i.e., $\overline Q\cdot \mathbf{y}(t)+\overline d=0$, if and only if $\mathbf{y}(t)=\theta_{\delta,\lambda}$, where  $\theta_{\delta,\lambda}$ is as defined in Definition~\ref{def..}. Furthermore, $\theta_{\delta,\lambda}$ is independent of $N$.
\end{lemma}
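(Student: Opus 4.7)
The plan is to prove the lemma by identifying the linear fluid equation $\overline{Q}\mathbf{y}+\overline{d}=0$ with the global balance equations of the (per-user) Markov chain induced by the $REL$ policy on the truncated belief space $\overline{\Pi}_c$, and then invoking uniqueness of the stationary distribution of that chain. The key observation is that on $\overline{Y}_{W^*}$, Whittle's index policy activates exactly the same fraction of users in each state as $REL$ does: by the definition of $g_i(\mathbf{y})$, states with $W_i>W^*$ receive $g_i=1$, states with $W_i<W^*$ receive $g_i=0$, and the unique boundary state $i^*$ (with $W_{i^*}=W^*$) receives $g_{i^*}(\mathbf{y})=(\lambda-\sum_{j:W_j>W^*}y_j)/y_{i^*}$, which at the fix point equals precisely the randomization parameter $\rho$ used by $REL$. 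Consequently the drift $Q(\mathbf{y})\mathbf{y}$ on $\overline{Y}_{W^*}$ coincides with the expected one-step change of the per-user chain under $REL$, scaled per class by $\delta_c$; the affine correction $\overline{d}$ absorbs the constant $\lambda(q^1_{i^*,\cdot}-q^0_{i^*,\cdot})$ coming from the boundary state.

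First I would make this identification explicit: decompose $Q(\mathbf{y})\mathbf{y}$ as in Lemma~\ref{eq:linear_fluid_sys}, read off $\overline{Q}$ and $\overline{d}$, and observe that the equation $\overline{Q}\mathbf{y}+\overline{d}=0$ can be rewritten, class by class, as $\mathbf{z}^c=\mathbf{z}^c K^c$ where $\mathbf{z}^c=(Y^{c,N}_{i,j},Y_s^{c,N})$ and $K^c$ is the transition kernel on $\overline{\Pi}_c$ induced by $REL$ (active at every state with $W>W^*$, active with probability $\rho$ at $W=W^*$, passive otherwise). Since $REL$ treats users independently, this kernel is a genuine Markov kernel on a finite state space and does not depend on $N$.

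Next I would establish that $K^c$ admits a unique stationary distribution. Because Whittle's index is non-decreasing in $\tau$ (Corollary~\ref{cor:whittle}) and $W(\vec\pi^{s,c})\geq W^*$ by Assumption~\ref{assumptionWhittle}, the steady-state belief $\vec\pi^{s,c}$ is activated with positive probability under $REL$; together with the passive-chain structure $\vec\pi_j^{\tau,c}\to\vec\pi_j^{\tau+1,c}$ and the absorption rule $q^{0,\overline\tau}(\vec\pi_j^{\overline\tau,c},\vec\pi^{s,c})=1$, this implies every state communicates with $\vec\pi^{s,c}$, so $K^c$ is irreducible on the finite set $\overline{\Pi}_c$ and hence has a unique stationary law $\mu^c$. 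Uniqueness of the fix point $(\delta_1\mu^1,\delta_2\mu^2)$ of $\overline{Q}\mathbf{y}+\overline{d}=0$ follows, taking into account the conservation constraints $\sum_i y_i^{c,N}=\delta_c$ that pin down the normalization of each $\mu^c$.

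Finally I would identify the fix point with $\theta_{\delta,\lambda}$. Under $REL$ the $N$ users evolve as independent copies of the Markov chain with kernel $K^c$, so by the ergodic theorem the occupation measure $\mathbf{Y}^N(t)$ converges in distribution to a random vector $\mathbf{Y}^{N,\infty}$ whose mean is $(\delta_1\mu^1,\delta_2\mu^2)$; this is precisely $\theta_{\delta,\lambda}$ and, since $\mu^c$ was determined entirely by the per-user kernel $K^c$, it is independent of $N$. The main obstacle is verifying, carefully, that the fluid drift on the boundary state $i^*$ matches the $REL$ balance equation despite the nonlinearity of $g_{i^*}$ outside of $\overline{Y}_{W^*}$; restricting attention to $\overline{Y}_{W^*}$ (where $g_{i^*}y_{i^*}$ is affine in $\mathbf{y}$) is essential and must be tied back to the assumption that the fix point lies in this region, which is what allows the linearization of Lemma~\ref{eq:linear_fluid_sys} to be used here.
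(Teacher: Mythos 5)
Your identification of $\overline Q\mathbf{y}+\overline d=0$ with a stationarity condition is the right instinct, and your existence argument (the $REL$ occupation measure satisfies the balance equations, and irreducibility of the per-user chain on the finite truncated space $\overline\Pi_c$ makes $\theta_{\delta,\lambda}$ well defined and $N$-independent) is sound --- indeed more explicit than the paper, which simply asserts that $\theta_{\delta,\lambda}$ is a zero because the fluid drift is the mean drift under $REL$, and defers the $N$-independence to Lemma~4 of \cite{OuyangEryilShroff16}. However, your uniqueness step has a genuine gap. The matrix $\overline Q$ is not the balance operator of a single fixed kernel $K^c$: the row associated with the boundary state $\ell_1^*$ (where $W_{\ell_1^*}=W^*$) carries the activation fraction $g_{\ell_1^*}(\mathbf{y})=(\lambda-\sum_{j:W_j>W^*}y_j)/y_{\ell_1^*}$, which is itself a function of $\mathbf{y}$. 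A solution $\overline{\mathbf{y}}\neq\theta_{\delta,\lambda}$ of $\overline Q\mathbf{y}+\overline d=0$ would be the stationary law of the threshold-$W^*$ kernel with boundary randomization $\overline\rho=g_{\ell_1^*}(\overline{\mathbf{y}})$, which need not equal $\rho$. Irreducibility of $K^c$ only gives uniqueness of the stationary law \emph{for each fixed randomization}; it does not exclude that two different randomizations $\rho\neq\overline\rho$ both yield stationary activation fraction exactly $\lambda$, hence two distinct fixed points. Your assertion that $g_{\ell_1^*}(\mathbf{y})$ ``at the fix point equals precisely $\rho$'' is exactly the statement that needs proof.

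What closes this gap is the strict monotonicity of the stationary activation fraction in the randomization parameter, equivalently the uniqueness of the pair $(W^*,\rho)$ attaining a given budget $\lambda$. This is precisely how the paper argues: it supposes a second zero $\overline{\mathbf{y}}$, observes that it would correspond to a policy $(\overline W,\overline\rho)$ achieving the same average activation $\lambda$, and derives a contradiction with indexability via Lemma~1 of Weber and Weiss \cite{WW90}. If you add that step (or, alternatively, invoke the nonsingularity of the reduced drift matrix $\hat Q$, whose eigenvalues are shown later in the appendix to all equal $-1$, so that the affine system can have at most one solution), your argument becomes complete. A minor further point: the two classes are coupled through the common budget $\lambda$ and the single randomized state $\vec\pi_1^{\ell_1^*,1}$ of class~1, so the per-class decomposition $\mathbf{z}^c=\mathbf{z}^cK^c$ you propose must carry this cross-class coupling rather than treating the two kernels as independent.
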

Having established the linearity of the fluid system and the uniqueness of its fixed point, the local asymptotic optimality result can be obtained. We do so in the next section.

\subsection{Local asymptotic optimality}
%In the following lemma we prove that (i) the belief state evolution under the $REL(W)$ policy has a steady-state distribution and, (ii) this latter is independent from $N$. The proof can be obtained by extending the results in \cite[Lemma 4]{OuyangEryilShroff12}

%\begin{lemma}\label{lemma:REL_steady}
%Let $\delta=(\delta_1,\delta_2)$ be fixed and let $\alpha$ be given. The state vector $\mathbf{Y}^N(t)$ under the $REL(W)$ policy converges in distribution to $\mathbf{Y}^N_\infty$, and $\vec y_{\delta,\alpha}=\mathbb{E}(\mathbf{Y}^N_\infty)$ is independent of $N$.
%\end{lemma}
The intuition behind the local asymptotic optimality result is that, if the average reward accrued by the $WIP$ policy falls in the neighborhood of $\mathbf{\theta}_{\delta,\lambda}$, then this reward is close to that accrued under the $REL$ policy. We define the neighborhood of $\mathbf{\theta}_{\delta,\lambda}$ as follows
$$
\mathcal{N}_\epsilon(\mathbf{\theta}_{\delta,\lambda})=\{\mathbf{y}\in\mathcal{Y}: \|\mathbf{y}-\mathbf{\theta}_{\delta,\lambda}\|\leq \epsilon\},
$$
%Note that $\vec y_{\delta,\alpha}$ completely determines the throughput obtained under $REL(W)$ and therefore if one proves that $\mathbf{Y}^N(t)$ is close to  $\vec y_{\delta,\alpha}$, then the throughput obtained by $WIP$ will also be close to that obtained under $REL(W)$, and will hence be optimal.
and we denote by $R^{N,WIP}_T(\mathbf{y})$ the throughput obtained under $WIP$ policy in the time interval $[0,T]$ given that the initial state of the system is $\mathbf{y}$, i.e.,
\begin{align*}
&R^{N,WIP}_T(\mathbf{y})    =\frac{1}{T}\mathbb{E}\left(\sum_{t=1}^T\sum_{n=1}^NR(X_n(t),\vec b_n^{WIP}(t),a_n^{WIP}(t))\bigg|\mathbf{Y}^N(0)=\mathbf{y}\right).
\end{align*}
Moreover, it can be easily proven that the reward obtained by $REL$, i.e., $R^{N,REL}$, is independent of $N$. The latter can be obtained by exploiting the idea that users under the $REL$ policy are activated independently from each other, see Lemma~3 in~\cite{OuyangEryilShroff16}. Therefore, $R^{REL}:=R^{N,REL}$, is determined by a user configuration $\delta$ and a given $\lambda$ and not the population size $N$.

The local convergence of the reward under $WIP$ to $R^{REL}$ is proven in the next proposition.
\begin{proposition}\label{prop:local_optimality_WIP}
For any given $(\delta,\lambda)$, there exist $\epsilon$ and $\mathcal{N}_{\epsilon}(\mathbf{\theta}_{\delta,\lambda})$ such that
\begin{align*}
\lim_{T\to\infty}\lim_{r\to\infty}\frac{R^{N_r,WIP}_T(\mathbf{y})}{N_r}=R^{REL},
\end{align*}
if $\mathbf{y}\in\mathcal{N}_{\epsilon}(\mathbf{\theta}_{\delta,\lambda})$, for all $(N_r)_{r}$ increasing sequence of positive integers such that $N_r,\delta_cN_r\in\mathbb{Z}$.
\end{proposition}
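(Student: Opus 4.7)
The plan is to combine three ingredients: (i) the fluid system is linear in a neighborhood $\overline Y_{W^*}$ of the unique fixed point $\theta_{\delta,\lambda}$ (Lemmas~\ref{eq:linear_fluid_sys} and \ref{lema:hola}); (ii) the $WIP$--driven stochastic state process $\mathbf{Y}^{N_r}(\cdot)$, suitably scaled, concentrates around the deterministic fluid trajectory as $N_r\to\infty$; and (iii) on $\overline Y_{W^*}$ the action taken by $WIP$ coincides, at the aggregate level, with the action prescribed by $REL$, so that the instantaneous reward rate at the fluid trajectory matches $R^{REL}/N_r$. Putting these pieces together gives convergence of the time--averaged reward to $R^{REL}$ for initial conditions in a neighborhood of $\theta_{\delta,\lambda}$.

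First I would choose $\epsilon>0$ small enough so that $\mathcal{N}_\epsilon(\theta_{\delta,\lambda})\subseteq\overline Y_{W^*}$; this is possible because $\theta_{\delta,\lambda}\in\overline Y_{W^*}$ (it is the equilibrium of $REL$, which by construction activates users with Whittle index strictly above $W^*$, randomizes among those at index $W^*$, and leaves the rest passive). Next I would analyze the deterministic linear recursion $\mathbf{y}(t+1)=\mathbf{y}(t)+\overline Q\,\mathbf{y}(t)+\overline d$ of Lemma~\ref{eq:linear_fluid_sys}. Writing $\mathbf{z}(t):=\mathbf{y}(t)-\theta_{\delta,\lambda}$ and using $\overline Q\,\theta_{\delta,\lambda}+\overline d=0$, the recursion reduces to $\mathbf{z}(t+1)=(I+\overline Q)\mathbf{z}(t)$. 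Here $\overline Q$ is (up to relabeling) the generator of an ergodic finite Markov chain on the truncated belief space, so its nonzero eigenvalues lie strictly inside the unit circle of $I+\overline Q$, and the one eigenvalue on the unit circle corresponds to the mass-preservation direction, which is inactive on $\mathcal{Y}$. Hence $\mathbf{y}(t)\to\theta_{\delta,\lambda}$ exponentially fast, and by shrinking $\epsilon$ further I can keep the entire trajectory inside $\overline Y_{W^*}$, so linearity is never lost.

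Second, I would lift this to the stochastic system. Writing $\mathbf{Y}^{N_r}(t+1)=\mathbf{Y}^{N_r}(t)+D\mathbf{Y}^{N_r}(t)+\mathbf{M}^{N_r}(t)$, where $\mathbf{M}^{N_r}$ is a martingale difference whose conditional variance is $O(1/N_r)$ (because individual user transitions are conditionally independent given the state), a standard Azuma/Doob argument yields $\sup_{t\le T}\|\mathbf{Y}^{N_r}(t)-\mathbf{y}(t)\|\to 0$ in probability as $r\to\infty$, for any fixed horizon $T$, provided the drift $Q(\cdot)\cdot$ is Lipschitz on the relevant region. On $\overline Y_{W^*}$, the Lipschitz property is immediate from Lemma~\ref{eq:linear_fluid_sys}; one has to check that the probability that $\mathbf{Y}^{N_r}(\cdot)$ ever exits $\overline Y_{W^*}$ on $[0,T]$ goes to zero, which follows by choosing $\epsilon$ smaller than the distance from the fluid trajectory to $\partial\overline Y_{W^*}$ and applying the martingale concentration bound. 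This delivers $\mathbf{Y}^{N_r}(t)\to\theta_{\delta,\lambda}$ as $r\to\infty$ and then $t\to\infty$.

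Finally, on $\overline Y_{W^*}$ the activation fractions $g_i(\mathbf{y})$ prescribed by $WIP$ coincide with those of $REL$ (strictly above threshold $\Rightarrow$ fully active, strictly below $\Rightarrow$ fully passive, at threshold $\Rightarrow$ the residual $\rho$), so the per--user reward rate $\sum_i R(b_i,a_i)\,y_i$ evaluated along the $WIP$ fluid trajectory equals the per--user reward under $REL$ in equilibrium. Combined with the concentration in Step~2, the Cesàro average $R^{N_r,WIP}_T(\mathbf{y})/N_r$ converges to the instantaneous rate $\mathbb{E}[R(b^{REL},a^{REL}(b^{REL}))]$ summed over classes, which equals $R^{REL}$, for any $\mathbf{y}\in\mathcal{N}_\epsilon(\theta_{\delta,\lambda})$. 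The main obstacle I expect is the joint interchange of the $r\to\infty$, $t\to\infty$ and time--averaging limits: one must guarantee uniform control (in $r$ and in the time horizon $T$) of the deviation $\mathbf{Y}^{N_r}(t)-\theta_{\delta,\lambda}$ so that the exit probability from $\overline Y_{W^*}$ remains negligible long enough for the Cesàro average to stabilize, which is where the exponential stability of $I+\overline Q$ established in Step~1 and a bounded--reward uniform integrability argument are crucial.
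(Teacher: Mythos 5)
Your overall architecture matches the paper's proof exactly: (Step 1) linearity of the fluid dynamics on $\overline Y_{W^*}$ plus spectral stability of the linearized map gives convergence of the fluid trajectory to $\theta_{\delta,\lambda}$; (Step 2) exponential concentration of $\mathbf{Y}^{N_r}(t)$ around the fluid trajectory uniformly over a finite horizon; (Step 3) continuity/matching of the per-user reward near $\theta_{\delta,\lambda}$, a split of the Ces\`aro sum into $[0,T_0)$ and $[T_0,T)$, and the limit order $r\to\infty$ then $T\to\infty$ (which, as you note, is exactly what makes the interchange harmless --- no uniformity in $T$ is needed). Your worry about uniform control in $T$ is therefore overcautious rather than a flaw.

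The one step where your argument does not go through as written is the spectral claim in Step 1. You assert that $\overline Q$ is ``(up to relabeling) the generator of an ergodic finite Markov chain,'' with the unit-circle eigenvalue killed by mass conservation. That is not accurate: because the activation fraction $g_{\ell_1^*}(\mathbf{y})y_{\ell_1^*}=\lambda-\sum_{i:W_i>W^*}y_i$ depends on the \emph{other} coordinates of $\mathbf{y}$, the linearized drift contains, in every column indexed by a state with $W_i>W^*$, an extra term $-\sum_j\bigl(q^1_{\ell_1^*j}-q^0_{\ell_1^*j}\bigr)\vec e_{\ell_1^*j}$ that destroys the generator structure (columns no longer sum to zero in the generator sense, and off-diagonal entries can be negative). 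So one cannot invoke Perron--Frobenius-type reasoning for stochastic matrices. The paper instead eliminates one coordinate via the mass constraint, writes out the reduced matrix $\hat Q$ explicitly in block form (Equation~\eqref{matrix_Q}), and computes $\det(\hat Q-\iota I)=(-1-\iota)^{2K\overline\tau+1}$ directly, so all eigenvalues of $\hat Q$ equal $-1$ and $I+\hat Q$ is nilpotent-like; this is the content of Lemma~\ref{eigen} and is where most of the technical work of Step 1 lives. Your proposal would be complete if you replaced the ergodic-generator heuristic with such an explicit computation (or some other argument that genuinely accounts for the $\lambda$-coupling term).
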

The proof of the proposition can be found in Appendix~\ref{proof_local}.

\subsection{Global asymptotic optimality}
In this section we establish the global asymptotic optimality of $WIP$ in the many users setting. 
 In order to do so, we are first going to prove that the system state $\mathbf{Y}^N(t)$ has a particular structure, see lemma below.
\begin{lemma}\label{lemma:recurrentclass}
For fixed values of $\delta$ and $\lambda$, and letting $N$ be large enough, we have that
\begin{enumerate}
\item $\mathbf{Y}^N(t)$ with $t\geq0$ is an aperiodic Markov chain with a single recurrent class.
\item For each $\epsilon>0$ there exists a recurrent state within $\mathcal{N}_\epsilon({\mathbf{\theta}_{\delta,\lambda}})$.
\end{enumerate}
\end{lemma}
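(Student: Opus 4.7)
The chain $\mathbf{Y}^N(t)$ lives on the finite lattice $\mathcal{Y}\cap N^{-1}\mathbb{Z}^{2(K\overline\tau+1)}$ (finiteness comes from the truncation at $\overline\tau$), and is Markov because $WIP$'s activation set is a deterministic function of the current empirical distribution (up to tie-breaking) and every one-step belief update is either deterministic aging or an independent multinomial draw with parameters $(p_1^{s,c},\ldots,p_K^{s,c})$ depending only on the user's class. Finiteness alone guarantees at least one recurrent class, so the plan is to exhibit a single state $\mathbf{y}^*\in\mathcal{N}_\epsilon(\theta_{\delta,\lambda})$ such that (i) $\mathbb{P}(\mathbf{Y}^N(1)=\mathbf{y}^*\mid\mathbf{Y}^N(0)=\mathbf{y}^*)>0$ and (ii) $\mathbf{y}^*$ is reachable with positive probability from every $\mathbf{y}\in\mathcal{Y}$. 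Claim (i) forces aperiodicity at $\mathbf{y}^*$; claim (ii) together with finiteness forces the recurrent class to be unique and to contain $\mathbf{y}^*$; and by construction $\mathbf{y}^*\in\mathcal{N}_\epsilon(\theta_{\delta,\lambda})$, so both parts of the lemma follow simultaneously.

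The construction of $\mathbf{y}^*$ uses the explicit form of $\theta_{\delta,\lambda}$: under $REL$ all users evolve as independent copies of a deterministic-aging-plus-multinomial-reset cycle $\vec\pi_j^{1,c}\to\vec\pi_j^{2,c}\to\cdots\to\vec\pi_j^{\Gamma_j^c+1,c}$, so by elementary renewal reasoning $\theta_{\delta,\lambda}$ is constant in $\tau$ along each passive segment $\{\vec\pi_j^{\tau,c}:1\le\tau\le\Gamma_j^c+1\}$, with any overflow routed through $\vec\pi^{s,c}$. Rounding these continuous masses to the nearest multiple of $1/N$ while preserving the constant-in-$\tau$ structure yields a lattice point $\mathbf{y}^*\in\mathcal{Y}$ with $\|\mathbf{y}^*-\theta_{\delta,\lambda}\|=O(1/N)$, hence $\mathbf{y}^*\in\mathcal{N}_\epsilon(\theta_{\delta,\lambda})$ once $N$ is large. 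At $\mathbf{y}^*$ the deterministic aging preserves counts along each passive segment exactly (counts are constant in $\tau$ by construction), and the only stochasticity is the class-wise multinomial redistribution of the $a_c$ activated users across $\{\vec\pi_r^{1,c}\}_{r=1}^K$; since irreducibility forces $p_r^{s,c}>0$ for all $r,c$ and the target counts $Ny^*_{r,1,c}$ are non-negative integers summing to $a_c$, the multinomial puts strictly positive mass on the precise outcome that reproduces $\mathbf{y}^*$, which gives (i).

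For reachability I exhibit a $T$-step trajectory $\mathbf{y}=\mathbf{y}_0\to\mathbf{y}_1\to\cdots\to\mathbf{y}_T=\mathbf{y}^*$, every transition of which has positive probability. Because all $p_r^{s,c}>0$, any prescribed multinomial outcome among the activated users has positive probability; and because tie-breaking at the critical index value is free, any prescribed split of the tied group has positive probability too. Choosing $T\ge\overline\tau+1$ gives every user the chance either to be activated directly or to age all the way into $\vec\pi^{s,c}$, from which $WIP$ will activate it by Assumption~\ref{assumptionWhittle} and resample it to any $\vec\pi_r^{1,c}$; by stepwise engineering of these multinomial outcomes I can drive the empirical distribution into $\mathbf{y}^*$. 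The main obstacle is that $WIP$'s activation set at each intermediate state is forced by the current empirical distribution, so one does not have unrestricted freedom in deciding whom to activate en route. This combinatorial bookkeeping is overcome by exploiting the full-support property of the multinomial kernel together with tie-break randomization to absorb any discrepancy in the required activation set, but it is the delicate step of the argument.
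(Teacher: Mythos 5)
Your overall architecture is sound and in fact close to the paper's: the paper also proves both items by (a) exhibiting a state, reachable with positive probability from every initial configuration, that can transition to itself (giving aperiodicity and a single recurrent class), and (b) constructing a positive-probability path into any $\mathcal{N}_\epsilon(\mathbf{\theta}_{\delta,\lambda})$. The ingredients you identify --- full support of the class-wise multinomial reset, aging of unselected users into $\vec\pi^{s,c}$ where the standing assumption $W(\vec\pi^{s,c})\geq W^*$ guarantees eventual activation, and tie-break randomization at the critical index --- are exactly the ones the paper uses. The structural difference is that the paper first funnels the whole population into a very simple concentrated state $\mathbf{Y}^N_{rec}$ (all activated users observing the channel with the smallest threshold, everyone else parked at $\vec\pi^{s,c}$), establishes the self-loop there, and only then builds a path toward $\mathbf{\theta}_{\delta,\lambda}$; you instead try to self-loop directly at a rounded copy $\mathbf{y}^*$ of $\mathbf{\theta}_{\delta,\lambda}$, which forces you to verify an integer-level flow balance (the rounded per-channel inflows at $\tau=1$ must exactly equal the rounded activated outflows, including the $\rho$-split at the critical belief state), a consistency check you assert but do not carry out.

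The genuine gap is in reachability. You correctly observe that $WIP$ forces the activation set at every intermediate state, so you cannot freely choose whom to activate en route to $\mathbf{y}^*$, and you then claim this ``combinatorial bookkeeping'' is overcome by the full-support multinomial plus tie-breaking --- but the multinomial only controls \emph{where} activated users land, not \emph{who} is activated, and tie-breaking only gives freedom within a single index level. Resolving this is not a routine absorption argument: it is the bulk of the paper's proof, which constructs the path explicitly in three timed stages (filling the class-1 passive segments channel by channel in order of decreasing threshold, holding for $x$ slots chosen so that $x+\max_i m_i^*\equiv 0 \bmod L$, then filling the class-2 segments), and even then needs the ``splittable channels'' device together with a large-$N$ approximation to make the forced activation sets compatible with the target masses at each step. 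Without an explicit construction of this kind, your proposal asserts the conclusion of the hard step rather than proving it.
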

\begin{proof}
The proof can be found in Appendix~\ref{proof_lemma_recurrent}, and follows the arguments used in~\cite[Lemma 5]{OuyangEryilShroff16}.
\end{proof}
Having proven that there exists a recurrent state in any $\epsilon$ neighborhood of $\mathbf{\theta}_{\delta,\lambda}$ allows to establish the global optimality result. However, one needs to ensure that the time the process $\mathbf{Y}^N(t)$ under $WIP$ policy needs to enter the neighborhood $\mathcal{N}_{\varepsilon}(\mathbf{\theta}_{\delta,\alpha})$ does not grow as $N$ increases. To avoid this from happening one can verify certain conditions to be satisfied, such as that given in~\cite[Assumption in Th. 2]{WW90} or that given in~\cite[Assumption $\Psi$]{OuyangEryilShroff16}. This latter states that the expected time of reaching any $\epsilon$ neighborhood of $\mathbf{\theta}_{\delta,\lambda}$ is bounded by an $\epsilon$ dependent constant. 
We can now state the global optimality result.
\begin{proposition} Let Assumption $\Psi$ in~\cite{OuyangEryilShroff16} be satisfied. Then for any initial state $\mathcal{Y}^N(0)=\mathbf{y}$ the following holds
\begin{align*}
\lim_{r\to\infty}\frac{R^{N_r,WIP}(\mathbf{y})}{N_r}=R^{REL},
\end{align*}
with $R^{N,WIP}(\mathbf{y})=\lim_{T\to\infty}R^{N,WIP}_T(\mathbf{y})$.
\end{proposition}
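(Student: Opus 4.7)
The plan is to sandwich $R^{N_r,WIP}(\mathbf{y})/N_r$ between a trivial upper bound and a matching lower bound obtained from Proposition~\ref{prop:local_optimality_WIP}. The upper bound is immediate from Inequality~\eqref{asympt_inequality} combined with the observation, made just before Proposition~\ref{prop:local_optimality_WIP}, that $R^{N,REL}=N\cdot R^{REL}$: dividing by $N_r$ one obtains $\limsup_{r\to\infty}R^{N_r,WIP}(\mathbf{y})/N_r\leq R^{REL}$ for every initial state.

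For the matching lower bound, I would fix $\epsilon>0$ and, invoking Proposition~\ref{prop:local_optimality_WIP}, choose a neighborhood $\mathcal{N}_\epsilon(\theta_{\delta,\lambda})$ on which the local convergence to $R^{REL}$ holds. Define the hitting time $\tau_\epsilon^N := \inf\{t\geq 0 : \mathbf{Y}^N(t)\in\mathcal{N}_\epsilon(\theta_{\delta,\lambda})\}$. Lemma~\ref{lemma:recurrentclass} ensures, for $N$ sufficiently large, that $\tau_\epsilon^N<\infty$ almost surely, since the unique recurrent class of $\mathbf{Y}^N(t)$ intersects $\mathcal{N}_\epsilon(\theta_{\delta,\lambda})$. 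The key quantitative ingredient is Assumption~$\Psi$ from~\cite{OuyangEryilShroff16}, which supplies the $N$-uniform bound $\mathbb{E}[\tau_\epsilon^N\mid\mathbf{Y}^N(0)=\mathbf{y}]\leq C(\epsilon)$.

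Next I would split the cumulative reward at $\tau_\epsilon^N$. Since per-slot per-user rewards are bounded by some constant $R_{\max}$, the pre-hitting part is at most $N\cdot R_{\max}\cdot C(\epsilon)$, so its contribution to $R^{N,WIP}_T(\mathbf{y})/N$ vanishes as $T\to\infty$. By the strong Markov property, the post-hitting part equals $\mathbb{E}\bigl[(T-\tau_\epsilon^N)\,R^{N,WIP}_{T-\tau_\epsilon^N}(\mathbf{Y}^N(\tau_\epsilon^N))\bigr]$; since $\mathbf{Y}^N(\tau_\epsilon^N)\in\mathcal{N}_\epsilon(\theta_{\delta,\lambda})$, Proposition~\ref{prop:local_optimality_WIP} together with dominated convergence (justified by the uniform boundedness of rewards) yields, on letting $T\to\infty$ and then $r\to\infty$ along $N=N_r$, a lower bound of $R^{REL}$ for $\liminf_{r\to\infty} R^{N_r,WIP}(\mathbf{y})/N_r$. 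Combining the two bounds gives the claim.

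The main obstacle is to pass from the local statement of Proposition~\ref{prop:local_optimality_WIP}, which is of the form $\lim_T\lim_r$, to the global claim of the form $\lim_r\lim_T$. A priori $\tau_\epsilon^N$ could grow with $N$, in which case the transient pre-hitting contribution would swamp the stationary one and the local result would not lift to a global one. Assumption~$\Psi$ is designed to circumvent precisely this failure mode by providing an $N$-uniform hitting-time bound, which together with the uniform boundedness of rewards makes the strong Markov decomposition rigorous and forces the two iterated limits to agree.
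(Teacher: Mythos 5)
Your upper bound and your identification of the order-of-limits interchange as the crux are both correct, and the paper itself gives no more detail than a pointer to Proposition~2 of~\cite{OuyangEryilShroff16} together with Lemma~\ref{lemma:recurrentclass}. However, your proposed resolution of the interchange has a genuine gap: a single split at the \emph{first} hitting time $\tau_\epsilon^N$ does not do the job. In the post-hitting term $\frac{1}{T}\mathbb{E}\bigl[(T-\tau_\epsilon^N)\,R^{N,WIP}_{T-\tau_\epsilon^N}(\mathbf{Y}^N(\tau_\epsilon^N))\bigr]$, taking $T\to\infty$ at fixed $N_r$ (which is the order required by the statement you are proving) sends $R^{N_r,WIP}_{T-\tau}(\mathbf{y}')$ to the long-run average reward of the finite-$N_r$ chain, which by the single-recurrent-class property of Lemma~\ref{lemma:recurrentclass} is independent of the starting point $\mathbf{y}'$ --- including of the fact that $\mathbf{y}'\in\mathcal{N}_\epsilon(\theta_{\delta,\lambda})$. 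You therefore recover exactly the quantity $R^{N_r,WIP}/N_r$ you set out to bound: the decomposition is circular. Proposition~\ref{prop:local_optimality_WIP} cannot be invoked at this point because it is a $\lim_{T}\lim_{r}$ statement and says nothing about $\lim_{T}$ at fixed $r$; dominated convergence does not repair this, since the issue is not integrability but the order of the two limits. Assumption~$\Psi$, as you deploy it, only controls the initial transient, whereas for fixed $N$ the process leaves and re-enters $\mathcal{N}_\epsilon(\theta_{\delta,\lambda})$ infinitely often and the long-run average is governed by the \emph{fraction of time} spent outside.

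The argument that actually closes the gap (and is what Proposition~2 of~\cite{OuyangEryilShroff16} carries out) is a renewal/excursion decomposition of the whole trajectory rather than a one-shot split: alternate ``outside'' excursions, whose expected durations are bounded uniformly in $N$ by Assumption~$\Psi$ applied at every re-entry (not just at time $0$), with ``inside'' sojourns, whose expected durations grow without bound in $N$ because, by Proposition~\ref{prop:last_for_local_optimality} and Lemma~\ref{lemmalemma}, the probability of exiting $\mathcal{N}_\epsilon(\theta_{\delta,\lambda})$ within any fixed horizon is $O(e^{-NC_2})$. The ratio of expected cycle lengths then shows that the stationary fraction of time spent outside the neighborhood vanishes as $N\to\infty$, while inside the neighborhood the per-user reward is within an arbitrarily small $\omega$ of $R^{REL}$; this yields $\lim_{r}\mathbb{E}_{\mu^{N_r}}[R]=R^{REL}$ for the stationary measures $\mu^{N_r}$, which is the statement to be proved. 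You should restructure the lower bound along these lines.
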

\begin{proof} The proof follows from proof of Proposition~2 in~\cite{OuyangEryilShroff16}, and relies in the proof of our Lemma~\ref{lemma:recurrentclass}.
\end{proof}

\section{Numerical analysis}\label{sec:numerical}
\iffalse
\begin{figure*}[t]
\centering
\begin{minipage}[b]{.31\textwidth}
 \includegraphics[width=0.99\textwidth]{ITW16_NumericalAnalysis/Figures/StructurePlot1}
               \end{minipage}\qquad
 \begin{minipage}[b]{.31\textwidth}
 % \includegraphics[width=0.99\textwidth]{SEC9Numerics/Figures/FigSwitchingCurves}
  \includegraphics[width=0.99\textwidth]{ITW16_NumericalAnalysis/Figures/BoxPlotWhiRand}
               \end{minipage}
\caption{(Left:) Structure of Whittle's index policy (prioritize user 1 in the area with ``+") and structure of the optimal policy (prioritize  user 1 in the area with ``.").  (Right:) Suboptimality gap (\%) of Whittle's index policy ($WI$) and a randomized policy, for 40 randomly generated examples. }
\label{fig:sce1}
\end{figure*}
\fi

We provide in this section some numerical results to assess the performance of the Whittle's index policy. Firstly, in Section~\ref{sec:approxnumerics} we study various scenarios to evaluate the accuracy of the approximation introduced in Section~\ref{sec:relaxation}. In Section~\ref{sec:num1} we compare the structure of $WIP$ w.r.t. the optimal solution. Finally, in Section~\ref{sec:num2} we perform extensive numerical experiments to compute the relative suboptimality gap of $WIP$ w.r.t. the optimal solution.  All the results have been obtained through the value iteration algorithm~\cite[Chap. 8.5.1]{Puterman2005}. %Details are skipped here due to lack of space. One can refer to \cite{Larran2016} for more details.  

%The objective of the present section is threefold: firstly we identify the scenarios in which Approximation 1(2) outperforms Approximation 2(1), secondly,  we illustrate the structure of Whittle's index policy ($WI$) with respect to the optimal solution (numerically obtained using a Value Iteration (VI) approach~\cite[Chap. 8.5.1]{Puterman2005}) and finally, we evaluate the performance of $WI$, as well as the performance of a randomized policy, with respect to the optimal solution. 
\begin{figure*}
\begin{minipage}[b]{0.48\linewidth}\centering
\includegraphics[scale=0.45]{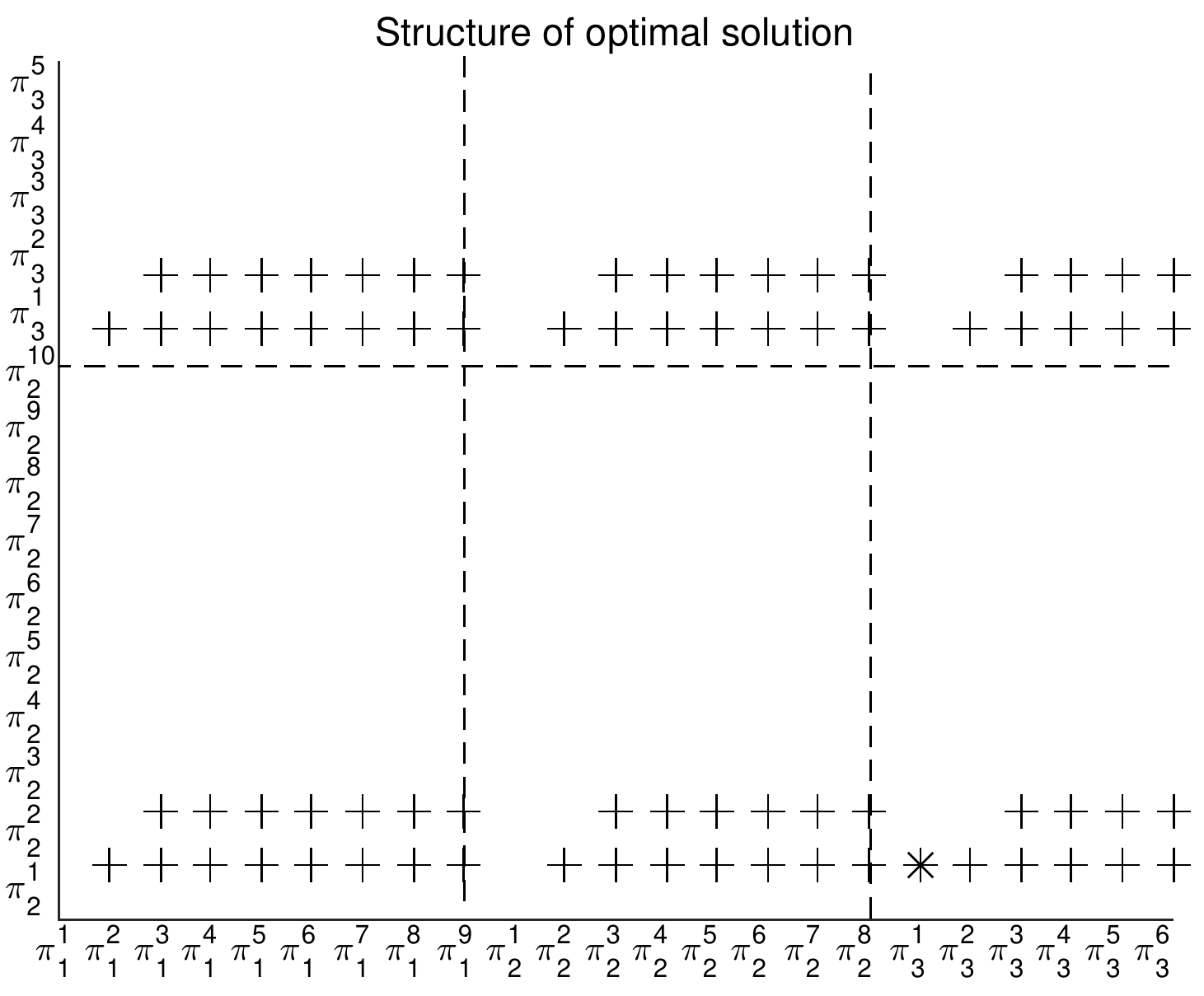}
\end{minipage}
\begin{minipage}[b]{0.48\linewidth}\centering
\includegraphics[scale=0.45]{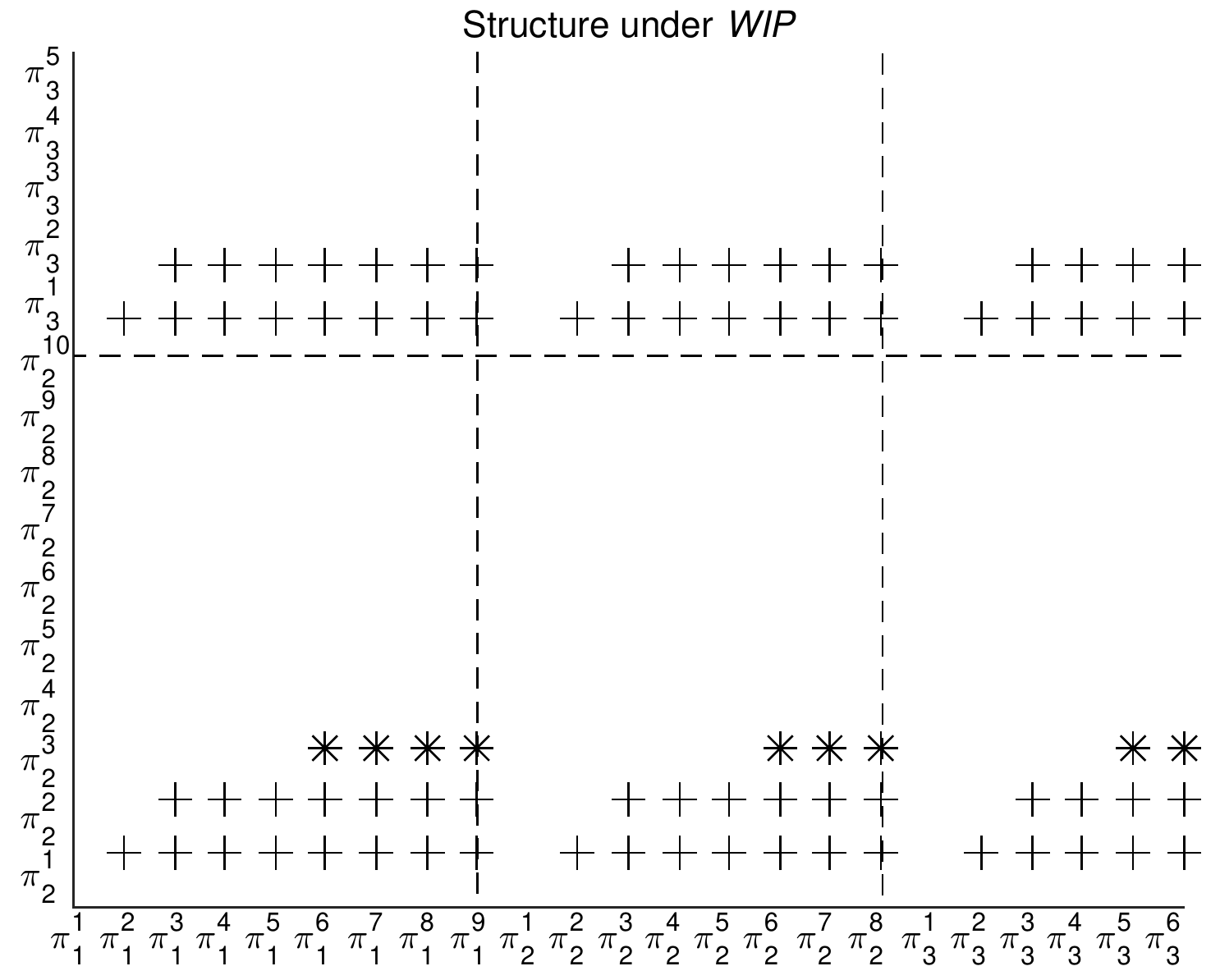}
\end{minipage}
\caption{Left: Structure of optimal solution. Right: Structure of Whittle's index policy. In the area with ``+'' or ``*'' user~1 is allocated with a pilot, and in the blank area user~2 receives the pilot. The sign ``*''  illustrates the states in which the optimal structure and the structure under $WIP$ do not match. The state vector $\pi_i^j$ in the horizontal axis refers to the belief state for user~1, and $\pi_i^j$ in the vertical axis refers to user~2. All states $\pi_1^j$ for user~2 are omitted since both policies prescribe to allocate the pilot to user~2.}
\label{fig:sce0}
\end{figure*}
\subsection{Accuracy of the approximation}\label{sec:approxnumerics}

%As discussed in Section~\ref{sec:relaxation}, Approximation 1 assumes the system to be in steady-state when a user has been selected for channel sensing. Approximation 2 assumes that the BS got a feedback on the channel state of the selected user in the previous slot. 
In Section~\ref{sec:error} %we have observed that the approximation introduced in Section~\ref{sec:relaxation} has an extremely small relative error with respect to the original model. Moreover, 
an upper bound on the error incurred by the approximation has been characterized, i.e., $D(W)$, for the per-user average reward. In this section we illustrate that this approximation shows an extremely small relative error in the $N$-dimensional problem, that is, problem~\eqref{eq:obj_func_average}. In order to perform this analysis we compute the optimal solution for the approximation and the optimal solution for the original model and we compare the corresponding average rewards.

\noindent
{\bf Example:} Let us assume a system with a BS and four users. We assume users to be in three possible channel states $h_{n1}, h_{n2}, h_{n3}$. %where the channel vector for each user $n$ is characterized by a zero-mean complex Gaussian  vector.
Let the transition matrices to be doubly stochastic and to be different for all four users. The steady-state belief state for all four users is $(1/3,1/3,1/3)$. Therefore, the immediate average reward for user $i$ if a pilot has been allocated to it is assumed to be $R^1_i=\frac{1}{3}\sum_{k=1}^3\log_2(1+SNR),$
%\begin{align}\label{eq:reward1}
%$R^1_i=\frac{1}{3}\sum_{k=1}^3\log_2(1+\frac{|h_{ik}^Hh_{ik}|^2}{\|h_{ik}\|}),$
%\end{align}
$i\in\{1,\ldots,K\}$. If user $i$ has not been selected the average immediate reward is considered to be $R_i(\vec\pi_j^\tau,0)=\rho_i\frac{1}{3}\sum_{k=1}^3\log_2(1+SNR),$
%\begin{align}\label{eq:reward0}
%$R_i(\vec\pi_j^\tau,0)=\rho_i\frac{1}{3}\sum_{k=1}^3\log_2(1+\frac{|h_{ik}^H\hat h_{i}|^2}{\|\hat h_{i}\|}),$
%\end{align}
where $\rho_i=\max_r\{p_{jr}^{(\tau)}\}$, that is, the highest probability channel state for user $i$, when its belief state  is  $\vec\pi_j^\tau$, and $\hat h_i=h_{i\sigma}$ where $\sigma=\argmax_r\{p_{jr}^{(\tau)}\}$. We first assume that a single pilot is available to the system, and later on we assume that three pilots are available. The relative  error of the approximation w.r.t. the original problem can be found in Table~\ref{table_example} for three different examples (three different channel vectors and probability transition matrices). We can observe in Table~\ref{table_example} that the error in all the examples is extremely small. %More details can be found in \cite{Larran2016}.%Recall that the approximations are different from the original model in states in which active action has been chosen, hence  the error increases when the number of available pilots increases. Anyway, we can observe in Table~\ref{table_example} that the error of the approximations is extremely small.
\begin{table}[!t]
\renewcommand{\arraystretch}{1.3}
\caption{Relative  (\%) suboptimality gap }
\label{table_example}
\centering
\begin{tabular}{c|c|c}
\hline
&  App.  1 pilot   &  App.  3 pilots   \\
\hline
Rel. err. ex. 1& 0.0798 & 0.0527 \\
%Abs. err. &${\bf 0.00042} $ & 0.00048 & 0.0107&{\bf 0.0019} \\
\hline
Rel. err. ex. 2& 0.0149 & 0.0393 \\
\hline
Rel. err. ex. 3 & 0.0217 & 0.0403 \\
\hline
\end{tabular}
\end{table}

\subsection{Structure of Whittle's index}\label{sec:num1}

We have shown in Corollary~\ref{cor:whittle} that Whittle's index is non-decreasing in $\tau$. Recall that this is due to Assumption A1. The latter implies that  if serving user $1$ is prescribed by $WIP$ in state $\vec\pi_j^\tau$ then also in $\vec\pi_j^{\tau+1}$ (independent of the number of users in the system). This structure is illustrated in the next example.

\noindent
{\bf Example:} We consider a system with two users, one pilot and three channel states, where the transition probability matrices for both users are 
\begin{align*}
P_1=\begin{bmatrix}
0.3& 0.4&0.3\\
0.2&0.2&0.6\\
0.5&0.4&0.1
\end{bmatrix}, P_2=\begin{bmatrix}
0.35& 0.35&0.3\\
0.3&0.15&0.55\\
0.35&0.5&0.15
\end{bmatrix}, 
\end{align*}
 and the channel vectors are $\mathbf{h}^1=(0.512+0.9671i,-1.694-1.892i,0.0503+0.0621i)$ for user~1, and $\mathbf{h}^2=(0.6386-0.1388i,-0.8789+0.2781i,-2.7781+0.6188)$ for user~2. The structure for this particular examples under $WIP$ and the optimal structure are illustrated in Figure~\ref{fig:sce0}. Both have been computed exploiting a value iteration algorithm. We see that $WIP$ captures the optimal strategy in a large area of the state-space.

\subsection{Performance of Whittle's index policy }\label{sec:num2}

\begin{figure*}
\begin{minipage}[b]{0.48\linewidth}\centering
\includegraphics[scale=0.5]{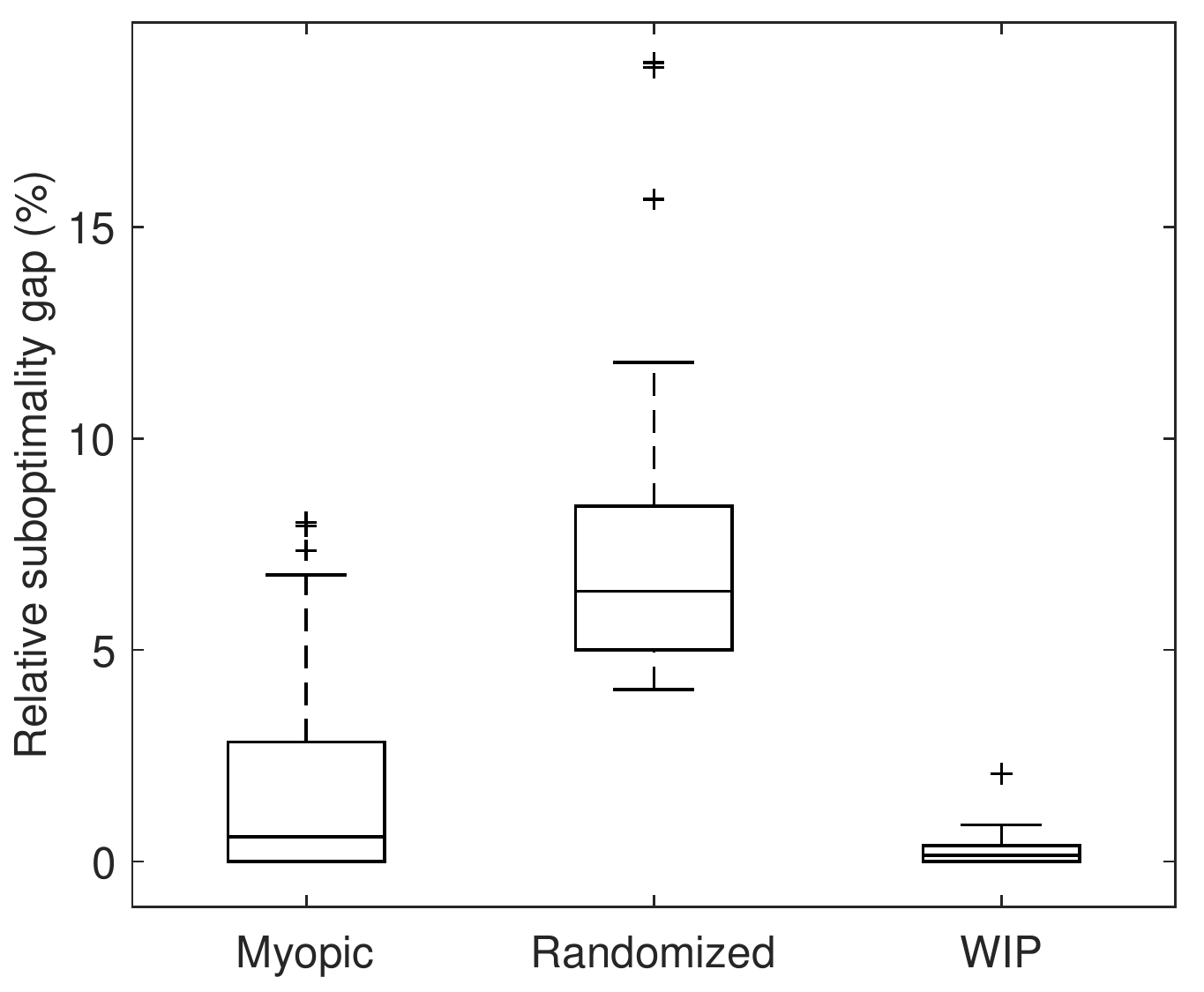}
\end{minipage}
\begin{minipage}[b]{0.48\linewidth}\centering
\includegraphics[scale=0.48]{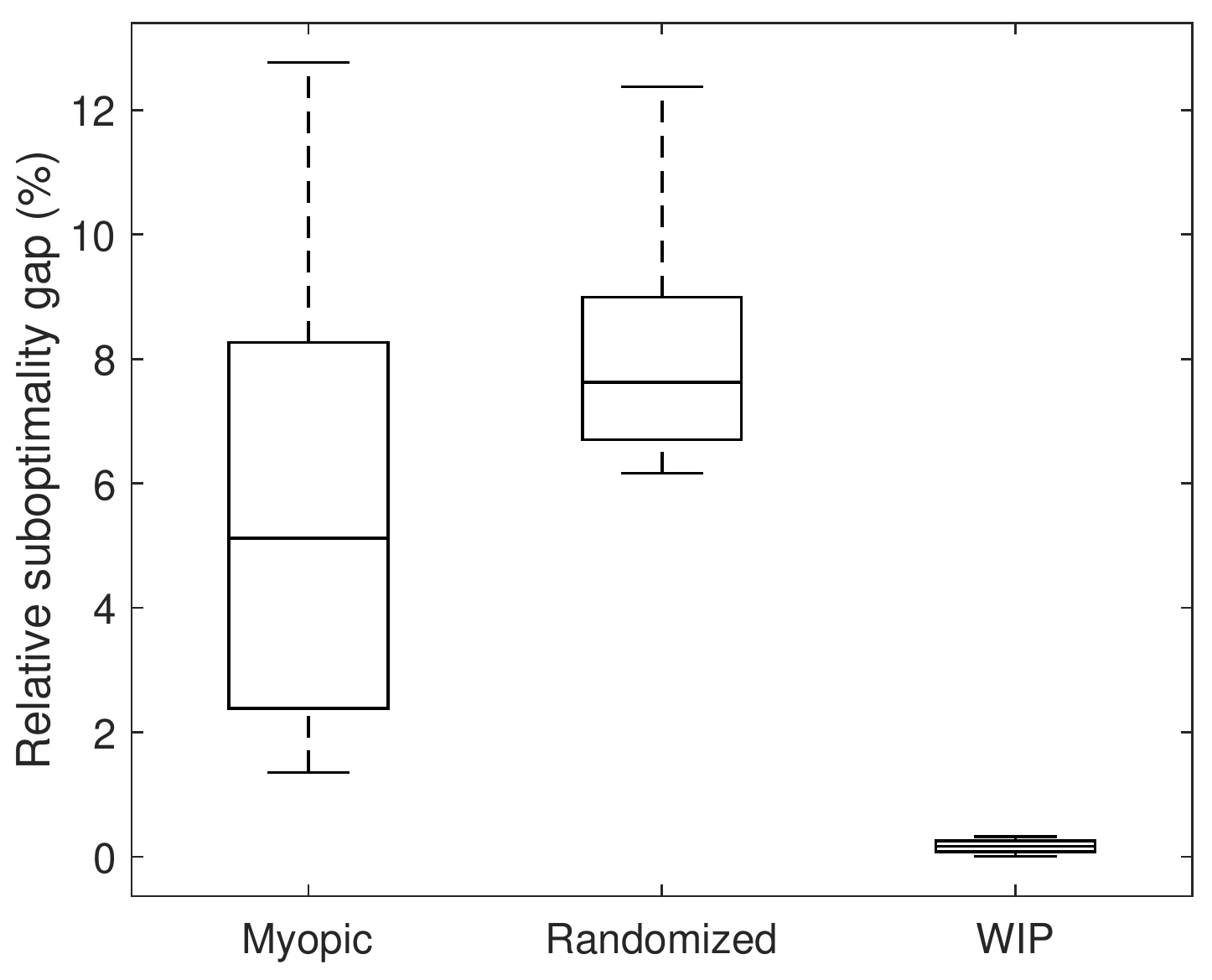}
\end{minipage}
\caption{Left: Suboptimality gap (\%) of the myopic policy, the randomized policy and Whittle's index policy ($WIP$), for 40 randomly generated examples with two users. Right:Suboptimality gap (\%) of the myopic policy, the randomized policy and Whittle's index policy ($WIP$), for 20 randomly generated examples with three users.}
\label{fig:sce1}
\end{figure*}

In this section we evaluate the performance of Whittle's index policy ($WIP$) using a value teration algorithm. In Example~1 we consider a system with two users an one pilot, and in Example~2 a system with three users and one pilot. Note that the value iteration algorithm is computationally very expensive and evaluating systems with a large number of users is out of reach. 
We are going to compare three different policies: (1) a myopic policy, which allocates the pilot to the user with highest average immediate reward, (2) a randomized policy, which allocates the pilot randomly to the users, and (3)  Whittle's index policy as defined in Corollary~\ref{cor:whittle}.

In order to use this algorithm, we need to truncate the belief state space with parameter $\tau>0$ large. We make sure $\tau$ to be large enough so that the structure of the optimal solution is not altered by the truncation.

\noindent
{\bf Example~1:} We generate 40 examples with randomly generated doubly stochastic transition probability matrices. We generate the channel vectors for each user randomly from a  zero-mean complex Gaussian  distribution. The throughput obtained by each user under both passive (no pilot has been allocated) and active actions (pilot has been allocated) are considered to be as in Section~\ref{sec:approxnumerics}. We have computed the suboptimality gap of all 40 examples (suboptimality gap$=\frac{g^{OPT}-g^{\phi}}{g^{OPT}}\cdot100$), for $\phi=WIP,\text{ randomized, and myopic}$. The results can be found in Figure~\ref{fig:sce1} (Left), where the horizontal line inside the box refers to the average suboptimality gap, the upper and lower edges of the box are the 25th and 75th percentiles and the crosses are the outliers.
We observe that the relative error of Whittle's index policy is remarkably small in all 40 examples, whereas choosing a user to allocate a pilot at random can give a relative error of up to 20\%.  $WIP$ being remarkably simple to apply, captures very closely the optimal exploration vs. exploitation trade-off. %More results and details can be found in \cite{Larran2016}.

\noindent
{\bf Example~2:} We generate 20 examples with one pilot, three users, and randomly generated doubly stochastic transition probability matrices for each user. We generate the channel vectors for each user randomly from a zero-mean complex Gaussian distribution. The reward function is again considered to be 
$$R_i(\vec\pi_j^\tau,0)=\rho_i\frac{1}{3}\sum_{k=1}^3\log_2(1+SNR),$$
where $\rho_i=\max_r\{p_{jr}^{(\tau)}\}$. The suboptimality gap for all three policies, myopic, randomized and $WIP$, is illustrated in Figure~\ref{fig:sce1} (right). We note that $WIP$ is again a remarkably  good policy. Moreover, although the performance of the myopic policy was good in the example with two users, in this case (with three users) this does not hold anymore. This suggests that the more users there are in the system, the better the performance of $WIP$ is w.r.t. the performance of the myopic and the randomized policies. 
\begin{remark}
The optimality of the myopic policy for the two users setting has been proven in Zhao et al.~\cite{zhao2008myopic}, for a similar model to the one considered in this paper. It is therefore not surprising that the myopic policy behaves well.
\end{remark}
\section{Conclusions}
We investigate the challenging problem of pilot allocation  in wireless networks over Markovian fading channels where typically, there are less available pilots than users. At each time, the BS can know the current CSI of users to whom a pilot has been assigned. A channel belief state is estimated for other users.  The problem can be cast as a restless multi-armed bandit  problem for which obtaining an optimal solution is out of reach.  We have proposed an approximation that yields, applying the Lagrangian relaxation approach, a low-complexity policy (Whittle's index policy).  The latter has shown to perform remarkably well. Future work include  deriving Whittle's index policy for the original problem. However, this would imply %obtaining explicit performance bounds for the approximations considered in this paper.% and (2) 
 deriving conditions under which threshold type of policies are optimal in the original POMDP with $K>2$, an extremely difficult task.

%\bibliographystyle{IEEEtran} \bibliography{IEEEabrv,bibli}
% Generated by IEEEtran.bst, version: 1.14 (2015/08/26)

\appendix
\subsection{Proof of Theorem~\ref{prop:threshold}}\label{app:proofthreshold}
 For ease of notation we drop the superscript $app$.
Let us define $$\nu(\vec\pi_j^\tau)=\max\left(x\in\argmax_{a\in\{0,1\}}f_\beta(\vec\pi_j^\tau,a)\right),$$ where
\begin{align*}
&f_\beta(\vec\pi_j^\tau,0):= R(\vec\pi_{j}^\tau,0)+ W +\beta V_\beta(\vec\pi_{j}^{\tau+1}),\\
&f_\beta(\vec\pi_j^\tau,1):= R^1+ \beta \sum_{k=1}^Kp_{k}^{s}V_\beta(\vec\pi_{k}^{1}),
\end{align*}
and $j\in\{1,\ldots,K\}$.
We want to prove that $\nu(\vec\pi_{j}^\tau)\leq\nu(\vec\pi_{j}^{\tau+1})$ for all $j\in\{1,\ldots,K\}$ and $\tau>0$. Since the latter implies that if it is optimal to select the user  in state $\vec \pi_j^{\tau}$ then it is also optimal to select the user in state $\vec \pi_j^{\tau+1}$.
Let $j\in\{1,\ldots,K\}$ and let $a\leq\nu(\vec\pi_j^{\tau})$ (where $a\in\{0,1\}$) then by definition
\begin{align}\label{eq:proof_threshold1}
f_\beta(\vec\pi_j^{\tau},\nu(\vec\pi_j^{\tau}))-f_\beta(\vec\pi_j^{\tau},a)\geq 0.
\end{align} 
Next we will prove
\begin{align}\label{eq:proof_threshold2}
&f_\beta(\vec\pi_j^{\tau},\nu(\vec\pi_j^{\tau}))+f_\beta(\vec\pi_j^{\tau+1},a)    \leq f_\beta(\vec\pi_j^{\tau},a)+f_\beta(\vec\pi_j^{\tau+1},\nu(\vec\pi_j^{\tau})), 
\end{align}
for all $\tau>0$, that is the supermodularity of $V_\beta(\cdot)$. The latter together with~\eqref{eq:proof_threshold1} imply 
\begin{align*}
f_\beta(\vec\pi_j^{\tau+1},a)&\leq -f_\beta(\vec\pi_j^{\tau},\nu(\vec\pi_j^{\tau}))+f_\beta(\vec\pi_j^{\tau},a)+f_\beta(\vec\pi_j^{\tau+1},\nu(\vec\pi_j^\tau))\\
&\leq f_\beta(\vec\pi_j^{\tau+1},\nu(\vec\pi_j^{\tau})), 
\end{align*}
that is, $\nu(\vec\pi_j^{\tau+1})\geq\nu(\vec\pi_j^{\tau})$, which concludes the proof. We are therefore left to prove \eqref{eq:proof_threshold2}
for which it suffices to show
\begin{align}\label{eq:proof_threshold3}
&f_\beta(\vec\pi_j^{\tau},1)+f_\beta(\vec\pi_j^{\tau+1},0)\leq f_\beta(\vec\pi_j^{\tau},0)+f_\beta(\vec\pi_j^{\tau+1},1). 
\end{align}
We substitute the expression of $f_\beta(\cdot,\cdot)$ in~\eqref{eq:proof_threshold3} and we obtain
\begin{align}\label{eq:proof_threshold4}
&\beta(p_{1}^{s}V_{\beta}(\vec\pi_1^1)+\ldots+p_{K}^{s}V_{\beta}(\vec\pi_K^1))+R(\vec\pi_j^{\tau+1},0)+\beta V_\beta(\vec\pi_j^{\tau +2})\nonumber\\
&\leq \beta(p_{1}^{s}V_{\beta}(\vec\pi_1^1)+\ldots+p_{K}^{s}V_{\beta}(\vec\pi_K^1))+R(\vec\pi_j^{\tau},0)+\beta V_\beta(\vec\pi_j^{\tau+1}).
\end{align}
By assumption $R(\vec\pi_j^{\tau},0)$ is non-increasing in $\tau$ and therefore in order to prove~\eqref{eq:proof_threshold4} it suffices to prove
\begin{align}\label{eq:proof_threshold5}
V_\beta(\vec\pi_j^{\tau +2})\leq V_\beta(\vec\pi_j^{\tau+1}),
\end{align}
i.e., $V_\beta(\cdot)$ being non-increasing. In order to prove~\eqref{eq:proof_threshold5} we will use the value iteration approach Puterman~\cite[Chap. 8]{Puterman2005}. Define $V_{\beta,0}(\vec\pi_j^\tau)=0$ for all $j\in\{1,\ldots,K\}$ and $\tau>0$ and 
\begin{align*}
V_{\beta,t+1}(\vec\pi_j^\tau)=\max\{&R(\vec\pi_j^\tau,0)+W+\beta V_{\beta,t}(\vec\pi_j^{\tau+1}),   R^1+\beta\sum_{k=1}^Kp_{k}^{s}V_{\beta,t}(\vec\pi_k^1)\}. 
\end{align*}
Observe that $V_{\beta,0}(\vec\pi_j^\tau)=0$ satisfies Inequality~\eqref{eq:proof_threshold5} (since $V_{\beta,0}(\vec\pi_j^\tau)=0$). We assume that $V_{\beta,t}(\vec\pi_j^\tau)$ satisfies~\eqref{eq:proof_threshold5} for all $j\in\{1,\ldots,K\}$ and all $\tau>0$, and we prove that $V_{\beta,t+1}(\vec\pi_j^\tau)$ satisfies the inequality as well. In order to prove the latter we need to show 
\begin{align}\label{eq:proof_threshold7}
&\max\{R(\vec\pi_j^\tau,0)+W+\beta V_{\beta,t}(\vec\pi_j^{\tau+1}),R^1+\beta\sum_{k=1}^Kp_{k}^{s}V_{\beta,t}(\vec\pi_k^1)\}\nonumber\\
&\geq\max\{R(\vec\pi_j^{\tau+1},0)+W+\beta V_{\beta,t}(\vec\pi_j^{\tau+2});   R^1+\beta\sum_{k=1}^Kp_{k}^{s}V_{\beta,t}(\vec\pi_k^1)\}.
\end{align}
Define $a(\vec\pi_j^{\tau})\in\{0,1\}$ as the action that is prescribed in state  $\vec\pi_j^{\tau}$. Since $V_{\beta,t}(\cdot)$ satisfies~\eqref{eq:proof_threshold5} we can argue on the monotonicity of the solution for $V_{\beta,t}(\cdot)$, i.e., $(a(\vec\pi_j^{\tau}),a(\vec\pi_j^{\tau+1}))\in\{(0,0),(0,1),(1,1)\}$. Therefore, it suffices to show Inequality~\eqref{eq:proof_threshold7} for the latter three options. Let us first assume $(a(\vec\pi_j^{\tau}),a(\vec\pi_j^{\tau+1}))=(0,0)$. Then~\eqref{eq:proof_threshold7} reduces to
\begin{align*}
&R(\vec\pi_j^\tau,0)+\beta V_{\beta,t}(\vec\pi_j^{\tau+1})\geq R(\vec\pi_j^{\tau+1},0)+\beta V_{\beta,t}(\vec\pi_j^{\tau+2}).
\end{align*}
The latter is satisfied due to the assumption that $R(\cdot,0)$ is non-increasing (A2) and the induction assumption that states that $V_{\beta,t}(\cdot)$ is non-increasing. We now assume $(a(\vec\pi_j^{\tau}),a(\vec\pi_j^{\tau+1}))=(1,1)$ and then~\eqref{eq:proof_threshold7} writes
\begin{align}\label{eq:proof_threshold8}
 &R^1+\beta\sum_{k=1}^Kp_{k}^{s}V_{\beta,t}(\vec\pi_k^1)\geq R^1+ \beta\sum_{k=1}^Kp_{k}^{s}V_{\beta,t}(\vec\pi_k^1),
\end{align}
which is obviously true. The last case, that is, $(a(\vec\pi_j^{\tau}),a(\vec\pi_j^{\tau+1}))=(0,1)$ follows from the $(1,1)$ case.

\subsection{Verification of conditions 8.10.1- 8.10.4' in Puterman~\cite{Puterman2005}}\label{append:discountedLimit}
We prove here that the conditions 8.10.1-8.10.4 and 8.10.4' in Puterman~\cite{Puterman2005} are satisfied. They imply that the relaxed long-run expected average reward, has a limit and can be obtained either  letting the discount factor $\beta\to1$ in the expected discounted reward model, or solving the average optimality equation that corresponds to the average reward model (Equation (8.10.9) in \cite{Puterman2005}). 
\begin{itemize}[noitemsep]
\item {\it Condition 8.10.1 in~\cite{Puterman2005}:} For all $\vec\pi_j^\tau\in\Pi$ $-\infty<R(\vec\pi_j^\tau,a(\vec\pi_j^\tau))<C$, for a constant $C<\infty$. The latter is obvious from the assumption that $0\leq R(\vec\pi_j^\tau,a(\vec\pi_j^\tau))<R^1<\infty$.
\item {\it Condition 8.10.2 in~\cite{Puterman2005}:} For all $\vec\pi_j^\tau\in\Pi$ and $0\leq \beta < 1$, $V_\beta(\vec\pi_j^\tau)>-\infty$, where 
\begin{align*}
V_{\beta}(\vec\pi_j^\tau)=\max\{&R(\vec\pi_j^\tau,1)+\beta\sum_{\overline \pi\in\Pi}q^1(\vec\pi_j^\tau,\overline\pi)V_\beta(\overline\pi);\\
& R(\vec\pi_j^\tau,0)+W+\beta \sum_{\overline \pi\in\Pi} q^0(\vec\pi_j^\tau,\overline\pi)V_\beta(\overline\pi)\}.
\end{align*}
The function $ R(\vec\pi_j^\tau,a(\vec\pi_j^\tau))$ being greater than or equal to 0  implies $V_{\beta}(\vec\pi_j^\tau)\geq0$, therefore condition 8.10.2 is satisfied.
\item {\it Condition 8.10.3 in~\cite{Puterman2005}:} There exists $0<C<\infty$ such that for all $\vec\pi_j^\tau,\vec\pi_i^{\tau'}\in\Pi$, $|V_{\beta}(\vec\pi_j^\tau)-V_{\beta}(\vec\pi_i^{\tau'})|\leq C$. We have shown that $V_{\beta}(\cdot)\geq 0$ and that $V_\beta(\cdot)$ is a non-increasing function (done in Lemma~\ref{lemma_monotonicity_V}, below). W.l.o.g. assume $V_\beta(\vec\pi_1^1)=\max_i\{V_\beta(\vec\pi_i^1)\}$. It therefore suffices to show that $\max_j\{V_{\beta}(\vec\pi_j^1)\}<\infty$, since in that case the inequality that be want to prove would be satisfied taking $C=V_\beta(\vec\pi_1^1)$. This is  proven in Lemma~\ref{lemma1}, see below.
\item {\it Condition 8.10.4 in~\cite{Puterman2005}:} There exists a non-negative function $F(\vec\pi_j^\tau)$ such that 
\begin{enumerate}
\item $F(\vec\pi_j^\tau)<\infty$ for all $\vec\pi_j^\tau\in\Pi$,
\item for all $\vec\pi_j^\tau\in\Pi$, and all $0\leq \beta<1$, $V_\beta(\vec\pi_j^\tau)-V_\beta(\vec\pi_1^{1})\geq-F(\vec\pi_j^\tau)$ and,
\item there exists $a\in \{0,1\}$ s.t 
$$
\sum_{\overline\pi\in\Pi}q^a(\vec\pi_1^1,\overline\pi)F(\overline\pi_j^\tau)<\infty.
$$
\end{enumerate}
It suffices to take $F(\cdot)= C$, and all three items above are satisfied. In order to prove condition 8.10.4' it suffices to extend the result in item 3) above to all $a\in\{0,1\}$ and all $\vec\pi_j^1\in\Pi$.
\end{itemize}

\begin{lemma}\label{lemma_monotonicity_V}
Let $V_{\beta}^{app}(\vec\pi_j^\tau)$ be the value function that corresponds to Approximation~\eqref{equation:app}, in state $\vec\pi^\tau_j$. Then,  $V_{\beta}^{app}(\vec\pi_j^\tau)$ is non-increasing in $\tau$ for all $j\in\{1,\ldots,K\}$.
\end{lemma}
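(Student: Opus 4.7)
} The plan is to argue by value iteration on the Bellman equation
\begin{align*}
V_\beta^{app}(\vec\pi_j^\tau)=\max\{&R(\vec\pi_j^\tau,0)+W+\beta V_\beta^{app}(\vec\pi_j^{\tau+1});\ R^1+\beta\textstyle\sum_{k=1}^K p_k^s V_\beta^{app}(\vec\pi_k^1)\},
\end{align*}
and propagate the monotonicity in $\tau$ through the iterates. Concretely, I would set $V_{\beta,0}^{app}(\vec\pi_j^\tau)\equiv 0$ (which is trivially non-increasing in $\tau$) and define
$$V_{\beta,r+1}^{app}(\vec\pi_j^\tau)=\max\{R(\vec\pi_j^\tau,0)+W+\beta V_{\beta,r}^{app}(\vec\pi_j^{\tau+1});\ R^1+\beta\textstyle\sum_{k=1}^K p_k^s V_{\beta,r}^{app}(\vec\pi_k^1)\}.$$
Standard value-iteration convergence for discounted MDPs with bounded rewards gives $V_{\beta,r}^{app}\to V_\beta^{app}$ pointwise, so it suffices to show each $V_{\beta,r}^{app}(\vec\pi_j^\tau)$ is non-increasing in $\tau$.

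The induction step is the crux but is short. Fix $j$ and $\tau$, and assume the hypothesis at stage $r$. From the definition of a maximum,
$$V_{\beta,r+1}^{app}(\vec\pi_j^\tau)\ \geq\ R(\vec\pi_j^\tau,0)+W+\beta V_{\beta,r}^{app}(\vec\pi_j^{\tau+1}),\qquad V_{\beta,r+1}^{app}(\vec\pi_j^\tau)\ \geq\ R^1+\beta\textstyle\sum_{k=1}^K p_k^s V_{\beta,r}^{app}(\vec\pi_k^1).$$
By Assumption~A2, $R(\vec\pi_j^\tau,0)\geq R(\vec\pi_j^{\tau+1},0)$, and by the induction hypothesis, $V_{\beta,r}^{app}(\vec\pi_j^{\tau+1})\geq V_{\beta,r}^{app}(\vec\pi_j^{\tau+2})$; combining these in the first inequality yields
$$V_{\beta,r+1}^{app}(\vec\pi_j^\tau)\ \geq\ R(\vec\pi_j^{\tau+1},0)+W+\beta V_{\beta,r}^{app}(\vec\pi_j^{\tau+2}).$$
The second inequality above is untouched by the shift $\tau\mapsto\tau+1$ since its right-hand side does not depend on $\tau$. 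Taking the maximum of these two lower bounds gives exactly $V_{\beta,r+1}^{app}(\vec\pi_j^{\tau+1})$, and therefore $V_{\beta,r+1}^{app}(\vec\pi_j^\tau)\geq V_{\beta,r+1}^{app}(\vec\pi_j^{\tau+1})$. Passing to the limit $r\to\infty$ preserves the inequality.

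There is no serious obstacle here: the argument is a one-step monotonicity induction and uses only A2 together with the fact that the active branch of the Bellman operator produces a value that is independent of $\tau$ (this is precisely why the approximation introduced in Section~\ref{sec:relaxation} is well suited, since under the original dynamics the active branch would also depend on $\vec\pi_j^\tau$ and this clean argument would fail). The mildest subtlety is ensuring pointwise convergence of the value iteration so the limit inherits monotonicity; this follows from boundedness of the per-stage reward (which is at most $R^1+|W|$) and the contraction property of the Bellman operator for $\beta<1$.
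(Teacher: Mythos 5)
Your proof is correct and follows essentially the same route as the paper's: value iteration starting from $V_{\beta,0}^{app}\equiv 0$ and an induction showing each iterate is non-increasing in $\tau$, using A2 together with the fact that the active branch of the Bellman operator is independent of $\tau$. The only difference is that your induction step lower-bounds $V_{\beta,r+1}^{app}(\vec\pi_j^\tau)$ by each of the two branches evaluated at $\tau+1$ and then takes the maximum, which is a slightly cleaner execution than the paper's case analysis over the possible action pairs $(0,0),(0,1),(1,1)$; both arguments are valid.
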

\begin{proof}
We want to prove that $V_\beta^{app}(\vec\pi_j^\tau)\geq V_\beta^{app}(\vec\pi_j^{\tau+1})$ for all $\tau>0$ and all $j\in\{1,\ldots,K\}$. 

We drop the superscript $app$ from the notation of $V_\beta(\cdot)$ throughout the proof. We will prove the monotonicity of $V_\beta(\cdot)$ using the Value Iteration algorithm. Let us define $V_{\beta,0}(\vec\pi_j^\tau)=0$ for all $j\in\{1,\ldots,K\}$ and $\tau>0$, and 
\begin{align}\label{another_equation}
V_{\beta,t+1}(\vec\pi_j^\tau)=\max\{&R(\vec\pi_j^\tau,0)+W+\beta V_{\beta,t}(\vec\pi_j^{\tau+1});     R^1+\beta\sum_{k=1}^Kp_{k}^sV_\beta(\vec\pi^1_k)\}.
\end{align}
We now prove that $V_{\beta,t}(\vec\pi_j^\tau)\geq V_{ \beta,t}(\vec\pi_j^{\tau+1})$ for all $t\geq 0$ using an induction argument. Note that the latter is obvious for $t=0$ since by definition $V_{\beta,0}(\vec\pi_j^\tau)=0$ for all $j\in\{1,\ldots,K\}$ and $\tau>0$. We assume $V_{\beta,t}(\cdot)$ to be non-increasing and we prove $V_{\beta,t+1}(\cdot)$ to be non-increasing.  To prove $V_{\beta,t+1}(\vec\pi^\tau_j)\geq V_{\beta,t+1}(\vec\pi^{\tau+1}_j)$, by definition of $V_{\beta,t+1}(\cdot)$ in~\eqref{another_equation}, we have to show that 
\begin{align}\label{eq_inequality}
&\max\{R(\vec\pi_j^\tau,0)+W+\beta V_{\beta,t}(\vec\pi_j^{\tau+1});R^1+\beta\sum_{k=1}^Kp_{k}^sV_\beta(\vec\pi^1_k)\}\nonumber\\
&\geq \max\{R(\vec\pi_j^{\tau+1},0)+W+\beta V_{\beta,t}(\vec\pi_j^{\tau+2}); R^1+\beta\sum_{k=1}^Kp_{k}^sV_\beta(\vec\pi^1_k)\}.
\end{align}
Arguing on the monotonicity of $V_{\beta,t}(\cdot)$ (induction assumption), we have that $(a(\vec\pi_j^\tau),a(\vec\pi_j^{\tau+1}))\in\{(0,0),(0,1),(1,1)\}$, where $a(\vec\pi_j^\tau)$ represents the optimal action in state $\vec\pi_j^\tau$. Therefore, to show that~\eqref{eq_inequality} is satisfied, it suffices to show inequality~\eqref{eq_inequality} for $(a(\vec\pi_j^\tau),a(\vec\pi_j^{\tau+1}))\in\{(0,0),(0,1),(1,1)\}$. Let us first assume $(a(\vec\pi_j^\tau),a(\vec\pi_j^{\tau+1}))=(1,1)$, then inequality~\eqref{eq_inequality} is obvious since both the RHS and the LHS are identical. If $(a(\vec\pi_j^\tau),a(\vec\pi_j^{\tau+1}))=(0,1)$, then from the definition of $V_{\beta,t+1}(\vec\pi_j^\tau)$, $a(\vec\pi_j^\tau)=0$ implies
\begin{align*}
R(\vec\pi_j^\tau,0)+W+\beta V_{\beta,t}(\vec\pi_j^{\tau+1})\geq R^1+\beta\sum_{k=1}^Kp_{k}^sV_\beta(\vec\pi^1_k),
\end{align*}
and the latter implies inequality~\eqref{eq_inequality} to be satisfied for $(a(\vec\pi_j^\tau),a(\vec\pi_j^{\tau+1}))=(0,1)$. We are left with the case $(a(\vec\pi_j^\tau),a(\vec\pi_j^{\tau+1}))=(0,0)$, in order for~\eqref{eq_inequality} to be satisfied, we need to show that
\begin{align*}
&R(\vec\pi_j^\tau,0)+W+\beta V_{\beta,t}(\vec\pi_j^{\tau+1})   \geq R(\vec\pi_j^{\tau+1},0)+W+\beta V_{\beta,t}(\vec\pi_j^{\tau+2}),
\end{align*}
which is true due to A1 and the induction assumption, i.e., $V_{\beta,t}(\cdot)$ to be non-increasing. This concludes the proof.
\end{proof}

\begin{lemma}\label{lemma1} Let $V_{\beta}(\cdot)$ denote the value function that corresponds to Approximation  in Equation~\eqref{equation:app}, with $0\leq\beta<1$ the discounted factor. Let $\vec\Gamma=(\Gamma_1(W),\ldots,\Gamma_K(W))$ be the optimal threshold policy for a fixed $W<\infty$. Then  $V_{\beta}(\vec\pi_j^{\tau})<\infty$ for all $j\in\{1,\ldots,K\}$ and $\tau>0$.
\end{lemma}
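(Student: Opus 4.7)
The proof plan is to exploit the closed-form value function under the optimal threshold policy $\vec\Gamma=(\Gamma_1,\ldots,\Gamma_K)$ guaranteed by Theorem~\ref{prop:threshold}, and reduce finiteness of $V_\beta$ to a single scalar computation. I will split the state space into the \emph{active region} $\{\vec\pi_j^\tau : \tau>\Gamma_j\}$ and the \emph{passive region} $\{\vec\pi_j^\tau : \tau\le\Gamma_j\}$ and use the Bellman optimality equation~\eqref{eq:discounted_optimality} on each.

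The first step is the key observation that, on the active region, the Bellman equation reads $V_\beta(\vec\pi_j^\tau)=R^1+\beta\sum_k p_k^s V_\beta(\vec\pi_k^1)$, a quantity that depends on neither $j$ nor $\tau$. Hence $V_\beta$ is constant on this region; call this common value $A$. Proving the lemma therefore reduces to showing that $A$ and each $V_\beta(\vec\pi_k^1)$ are finite.

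Next, on the passive region I unroll the one-step recursion $V_\beta(\vec\pi_k^\tau)=R(\vec\pi_k^\tau,0)+W+\beta V_\beta(\vec\pi_k^{\tau+1})$ starting from $\tau=1$. For indices $k$ with $\Gamma_k<\infty$, the process enters the active region after $\Gamma_k$ passive steps, giving
\begin{equation*}
V_\beta(\vec\pi_k^1)=\sum_{s=0}^{\Gamma_k-1}\beta^s\bigl[R(\vec\pi_k^{1+s},0)+W\bigr]+\beta^{\Gamma_k}A.
\end{equation*}
For indices $k$ with $\Gamma_k=\infty$ (always-passive channels, as permitted by Theorem~\ref{prop:whittle}), the recursion never terminates and produces the geometric tail $V_\beta(\vec\pi_k^1)=\sum_{s=0}^\infty\beta^s[R(\vec\pi_k^{1+s},0)+W]$, which converges because A2 gives $0\le R(\cdot,0)\le R^1<\infty$, $|W|<\infty$, and $\beta<1$.

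Plugging these expressions into the definition $A=R^1+\beta\sum_k p_k^s V_\beta(\vec\pi_k^1)$ yields a scalar linear equation $A=B_\beta+\kappa_\beta A$, where $B_\beta$ is a finite constant aggregating the known sums and $\kappa_\beta:=\sum_{k:\Gamma_k<\infty}p_k^s\beta^{\Gamma_k+1}\le\beta<1$. This solves uniquely to $A=B_\beta/(1-\kappa_\beta)<\infty$, after which finiteness propagates back through the expressions above to give $V_\beta(\vec\pi_j^\tau)<\infty$ for every $(j,\tau)$. The only place that needs care is the bookkeeping when some thresholds $\Gamma_k$ are infinite; this is handled cleanly by the partition of $\{1,\ldots,K\}$ into indices with finite versus infinite thresholds, since the contribution of the latter collapses into a convergent geometric sum and is simply absent from $\kappa_\beta$.
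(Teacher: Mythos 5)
Your proof is correct and follows essentially the same route as the paper's: both unroll the Bellman recursion under the optimal threshold policy to obtain closed-form expressions for $V_\beta(\vec\pi_k^1)$ and reduce everything to a single scalar linear equation whose denominator ($1-\sum_k p_k^s\beta^{\Gamma_k+1}$ in the paper, $1-\kappa_\beta$ in yours) is bounded below by $1-\beta>0$. The only difference is organizational: you pivot on the active-region constant $A=R^1+\beta\sum_k p_k^s V_\beta(\vec\pi_k^1)$ rather than on $V_\beta(\vec\pi_1^1)$, and you treat the infinite-threshold channels explicitly via a convergent geometric tail, which the paper's derivation glosses over.
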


\begin{proof} For ease of notation, we will denote by $R^0(\vec\pi_j^\tau):=R(\vec\pi_j^\tau,0)$, i.e., the average immediate reward under action passive, throughout the proof.

We have proven in Theorem~\ref{prop:threshold} that an optimal solution is of threshold type. Let $\vec\Gamma(W)=(\Gamma_1(W),\ldots,\Gamma_K(W))$ be the optimal threshold for a given $W$. Then it can be shown that
\begin{align}\label{eq:valuefunction}
V_\beta(\vec\pi_j^1)= &\sum_{i=1}^{\Gamma_j(W)}\beta^{i-1}(R^0(\vec\pi_j^i)+W)    +\beta^{\Gamma_j(W)}(R^1+\beta\sum_{k=1}^Kp^s_kV_\beta(\vec\pi_k^1)),
\end{align}
for all $j\in\{1,\ldots,K\}$. From the $j=1$ case we obtain
\begin{align}\label{eq:VF_j=1}
&\sum_{k=1}^Kp^s_kV_\beta(\vec\pi_k^1)    = -\frac{R^1}{\beta}+\frac{V_\beta(\vec\pi_1^1)-\sum_{i=1}^{\Gamma_1(W)}\beta^{i-1}(R^0(\vec\pi_1^i)+W)}{\beta^{\Gamma_1(W)+1}}.
\end{align}
Substituting the latter in Equation~\eqref{eq:valuefunction} for the $j>1$ case, we obtain
\begin{align}\label{eq:withrespectto1}
V_\beta(\vec\pi_j^1)=&\sum_{i=1}^{\Gamma_j(W)}\beta^{i-1}(R^0(\vec\pi_j^i)+W)    +\frac{\beta^{\Gamma_j(W)+1}}{\beta^{\Gamma_1(W)+1}}\left(V_\beta(\vec\pi_1^1)-\sum_{i=1}^{\Gamma_1(W)}\beta^{i-1}(R^0(\vec\pi_1^i)+W)\right),
\end{align}
for all $j\neq1$.
We now substitute the latter in Equation~\eqref{eq:VF_j=1} and solve for $V_\beta(\vec\pi_1^1)$. We obtain
\begin{align}\label{eq:V_1_1}
V_\beta(\vec\pi_1^1)=\bigg[&\sum_{i=1}^{\Gamma_1(W)}\beta^{i-1}(R^0(\vec\pi_1^i)+W)+\beta^{\Gamma_1(W)}R^1\nonumber\\
&+\beta^{\Gamma_1(W)+1}\sum_{k=2}^Kp_k^s\sum_{i=1}^{\Gamma_k(W)}\beta^{i-1}(R^0(\vec\pi_k^i)+W)\nonumber\\
&-\sum_{k=2}^Kp_k^s\beta^{\Gamma_k(W)+1}\sum_{i=1}^{\Gamma_1(W)}\beta^{i-1}(R^0(\vec\pi_1^i)+W)\bigg]\cdot \left[1-\sum_{k=1}^{K}p_k^s\beta^{\Gamma_k(W)+1}\right]^{-1}.
\end{align}
If we assume that $\vec\pi\neq e_j$ for any $j\in\{1,\ldots,K\}$ then $V_\beta(\vec\pi_1^1)<\infty$. The latter together with  Equation~\eqref{eq:withrespectto1} imply $V_\beta(\vec\pi_j^1)<\infty$ for all $j\in\{1,\ldots,K\}$. This concludes the proof.
\end{proof}

\subsection{Explicit expression of $\omega_i$}\label{app:omegas}
We aim at solving the balance equations for the Approximation in Equation~\eqref{equation:app}. Note that $\alpha^{\vec\Gamma}(\vec\pi_i^\tau)=\alpha^{\vec\Gamma}(\vec\pi_i^{\tau'})$ for all $\tau,\tau'\leq\Gamma_i+1$, that is, the probability of being in state $\vec\pi_i^\tau$ equals that of state $\vec\pi_i^{\tau'}$ if passive action is prescribed in them or, if $\tau=\Gamma_i+1$. Hence, $\omega_j$ is the solution of 
\begin{align*}
&\omega_j(1-p_j^s)=\sum_{i=1}^{j-1}p_i^s\omega_i+\sum_{i=j+1}^Kp_i^s\omega_i, \hbox{ for all } j\in\{1,\ldots,K\},
\end{align*} 
and $\sum_{k=1}^K\omega_k=1$. Hence,   $\omega_j=p_j^s$.
\iffalse
 In Approximation 2 $\omega_j=\sum_{i=j+1}^KA_{ij}\omega_i$, where, $A_{ij}=\frac{\bar c_{ij}}{d_j}$ for all $j$ and $i>j$, with
\begin{align*}
%&\bar c_j= \sum_{i=1}^{j-1}\left(\sum_{r_1=1}^{j-1}\sum_{r_2=r_1+1}^{j-1}\ldots\sum_{r_i=r_{i-1}+1}^{j-1}p_{r_1j}A_{r_2r_1}\cdot\ldots\cdot A_{r_ir_{i-1}}A_{r_i}\right),\nonumber\\
&\bar c_{ij}=p_{ij}+\sum_{r_1=1}^{j-1}A_{ir_1}p_{r_1j}+\sum_{r_1=1}^{j-1}\sum_{r_2=r_1+1}^{j-1}A_{r_2r_1}A_{ir_2}p_{r_1j}+\ldots\\
&+\sum_{r_1=1}^{j-1}\sum_{r_2=r_1+1}^{j-1}\cdot\ldots\cdot\sum_{r_{j-1}=r_{j-2}+1}^{j-1}A_{r_2r_1}\cdot\ldots\cdot A_{r_{j-1}r_{j-2}}A_{ir_{j-1}}p_{r_1j},\nonumber\\
&d_j=1-p_{jj}-\sum_{r_1=1}^{j-1}A_{jr_1}p_{r_1j}-\sum_{r_1=1}^{j-1}\sum_{r_2=r_1+1}^{j-1}A_{r_2r_1}A_{jr_2}p_{r_1j}-\ldots\\
&-\sum_{r_1=1}^{j-1}\sum_{r_2=r_1+1}^{j-1}\ldots\sum_{r_{j-1}=r_{j-2}+1}^{j-1}A_{r_2r_1}\cdot\ldots\cdot A_{r_{j-1}r_{j-2}}A_{jr_{j-1}}p_{r_1j}),
\end{align*}
where $A_{i1}=\frac{p_{i1}}{1-p_{11}}$ for all $i>1$.
From the latter the explicit expression of $\omega_i$ for all $i$ can be obtained from $\sum_{k=1}^K\omega_k=1$. To do so, first express $\omega_i$ for all $i>K$ with respect to $\omega_K$. That is,
\begin{align*}
\omega_i=B_{iK}\omega_{K}, \hbox{ where } B_{iK}=\sum_{r=i+1}^{K-1}A_{ri}B_{rK}+A_{Ki},
\end{align*}
and $B_{K-1K}=A_{KK-1}$. Therefore, $\omega_K=(1+\sum_{i=1}^{K-1}B_{iK})^{-1}$.
 The closed-form expression of $\omega_i$ for all $i<K$ can be computed using that of $\omega_K$.
\fi

\subsection{Proof of Theorem~\ref{prop:whittle}}\label{app:Whittles index}
 The following definition will be exploited throughout the proof:
\begin{align*}
g^{\vec\Gamma}(W)=\mathbb{E}(R(b^{\vec\Gamma},a^{\vec\Gamma}(b^{\vec\Gamma})))+W\sum_{k=1}^K\sum_{j=1}^{\Gamma_k}\alpha^{\vec\Gamma}(\vec\pi_k^{j}).
\end{align*}
Note that $g^{\vec\Gamma}(W)$ refers to the average reward obtained under threshold policy $\vec\Gamma$ and subsidy for passivity $W$.

We will assume $I\in\mathbb{N}\cup\{0,\infty\}$ to be the number of steps until the algorithm stops. Therefore $\Gamma_j^I=\infty$ for all $j\in\{1,\ldots,K\}$. We set $W_i:=W_I$ for all $i\geq I$. We will prove that $W_0<W_1<\ldots<W_\infty$. By definition we have that $\Gamma^i$ is increasing in $i$, that is, $\Gamma_j^i\geq\Gamma_j^{i-1}$ for all $j$ and $i>0$.
Let us first prove that $W_i<W_{i+1}$. By the definition of $W_i$ we have that
\begin{align*}
&\frac{\mathbb{E}(R(b^{\vec\Gamma^{i-1}},a^{\vec\Gamma^{i-1}}(b^{\vec\Gamma^{i-1}})))-\mathbb{E}(R(b^{\vec\Gamma^i},a^{\vec\Gamma^i}(b^{\vec\Gamma^i})))}{\sum_{j=1}^{K}\left(\sum_{r=1}^{\Gamma_j^i}\alpha^{\vec\Gamma^i}(\vec\pi^r_j)-\sum_{r=1}^{\Gamma_j^{i-1}}\alpha^{\vec\Gamma^{i-1}}(\vec\pi^r_j)\right)}\nonumber\\
<&\frac{\mathbb{E}(R(b^{\vec\Gamma^{i-1}},a^{\vec\Gamma^{i-1}}(b^{\vec\Gamma^{i-1}})))-\mathbb{E}(R(b^{\vec\Gamma^{i+1}},a^{\vec\Gamma^{i+1}}(b^{\vec\Gamma^{i+1}})))}{\sum_{j=1}^{K}\left(\sum_{r=1}^{\Gamma_j^{i+1}}\alpha^{\vec\Gamma^{i+1}}(\vec\pi^r_j)-\sum_{r=1}^{\Gamma_j^{i-1}}\alpha^{\vec\Gamma^{i-1}}(\vec\pi^r_j)\right)},
\end{align*}
since $\sum_{j=1}^{K}\sum_{r=1}^{\Gamma_j^{i}}\alpha^{\vec\Gamma^{i}}(\vec\pi^r_j)$ is non-decreasing in $i$ we have
\begin{align*}
&[\mathbb{E}(R(b^{\vec\Gamma^{i-1}},a^{\vec\Gamma^{i-1}}(b^{\vec\Gamma^{i-1}})))-\mathbb{E}(R(b^{\vec\Gamma^i},a^{\vec\Gamma^i}(b^{\vec\Gamma^i})))]\nonumber\\
&\cdot\left[\sum_{j=1}^{K}\left(\sum_{r=1}^{\Gamma_j^{i+1}}\alpha^{\vec\Gamma^{i+1}}(\vec\pi^r_j)-\sum_{r=1}^{\Gamma_j^{i-1}}\alpha^{\vec\Gamma^{i-1}}(\vec\pi^r_j)\right)\right]\nonumber\\
&< [\mathbb{E}(R(b^{\vec\Gamma^{i-1}},a^{\vec\Gamma^{i-1}}(b^{\vec\Gamma^{i-1}})))-\mathbb{E}(R(b^{\vec\Gamma^{i+1}},a^{\vec\Gamma^{i+1}}(b^{\vec\Gamma^{i+1}})))]\nonumber\\
&\quad\cdot\left[\sum_{j=1}^{K}\left(\sum_{r=1}^{\Gamma_j^i}\alpha^{\vec\Gamma^i}(\vec\pi^r_j)-\sum_{r=1}^{\Gamma_j^{i-1}}\alpha^{\vec\Gamma^{i-1}}(\vec\pi^r_j)\right)\right].
\end{align*}
Adding the term 
$$\mathbb{E}(R(b^{\vec\Gamma^i},a^{\vec\Gamma^i}(b^{\vec\Gamma^i})))\sum_{j=1}^{K}\left(\sum_{r=1}^{\Gamma_j^{i-1}}\alpha^{\vec\Gamma^{i-1}}(\vec\pi^r_j)-\sum_{r=1}^{\Gamma_j^{i}}\alpha^{\vec\Gamma^{i}}(\vec\pi^r_j)\right),$$
 on both sides of the latter inequality, and after some algebra we obtain $W_i<W_{i+1}$. We now prove that indeed $W_i$ for all $i$ defines Whittle's index. To show that we need to prove:
\begin{enumerate}
\item Threshold policy $\vec\Gamma^{-1}=(0,\ldots,0)$ is optimal for the single-arm average reward POMDP problem for all $W$ such that $W<W_0$.
\item Threshold policy $\vec\Gamma^i$ is optimal for all $W_i<W<W_{i+1}$.
\item Threshold policy $\infty$ is optimal for all $W$ such that~$W>W_I$.
\end{enumerate}
Let us first prove 1) . From the definition of $W_0$ we have that, for all $W<W_0$
\begin{align*}
&W\sum_{k=1}^K\sum_{j=1}^{\Gamma_k}\alpha^{\vec\Gamma}(\vec\pi_k^j)\leq\mathbb{E}(R(b^{\vec\Gamma^{-1}},a^{\vec\Gamma^{-1}}(b^{\vec\Gamma^{-1}})))-\mathbb{E}(R(b^{\vec\Gamma},a^{\vec\Gamma}(b^{\vec\Gamma})))\nonumber\\
&\Longrightarrow\mathbb{E}(R(b^{\vec\Gamma},a^{\vec\Gamma}(b^{\vec\Gamma})))+W\sum_{k=1}^K\sum_{j=1}^{\Gamma_k}\alpha^{\vec\Gamma}(\vec\pi_k^j)\nonumber\\
&\leq \mathbb{E}(R(b^{\vec\Gamma^{-1}},a^{\vec\Gamma^{-1}}(b^{\vec\Gamma^{-1}})))=g^{\vec\Gamma^{-1}}(W).
\end{align*}
That is, $g^{\vec\Gamma^{-1}}(W)\leq g^{\vec\Gamma}(W)$ for all $\vec\Gamma\geq(0,\ldots,0)$. Threshold policy $\vec\Gamma^{-1}$ is therefore optimal for all $W<W_0$.

We will establish 2) using an inductive argument. From the definition of $\vec\Gamma^0$ it can be seen that
\begin{align}\label{eq:ineq_whittle}
&\mathbb{E}(R(b^{\vec\Gamma^{0}},a^{\vec\Gamma^{0}}(b^{\vec\Gamma^{0}})))+W_0\sum_{k=1}^K\sum_{j=1}^{\Gamma_k^0}\alpha^{\vec\Gamma^0}(\vec\pi^{j}_k)   \geq\mathbb{E}(R(b^{\vec\Gamma},a^{\vec\Gamma}(b^{\vec\Gamma})))+W_0\sum_{k=1}^K\sum_{j=1}^{\Gamma_k}\alpha^{\vec\Gamma}(\vec\pi^{j}_k),
\end{align}
for all $\vec\Gamma$, that is, $g^{\vec\Gamma^0}(W_0)\geq g^{\vec\Gamma}(W_0)$. By the assumption that $\sum_{k=1}^K\sum_{j=1}^{\Gamma_k}\alpha^{\vec\Gamma}(\vec\pi^{j}_k)$ strictly increases in $\vec\Gamma$ and inequality~\eqref{eq:ineq_whittle} we obtain for all $\vec\Gamma\leq\vec\Gamma^0$
\begin{align*}
&\mathbb{E}(R(b^{\vec\Gamma^{0}},a^{\vec\Gamma^{0}}(b^{\vec\Gamma^{0}})))+W\sum_{k=1}^K\sum_{j=1}^{\Gamma_k^0}\alpha^{\vec\Gamma^0}(\vec\pi^{j}_k)   \geq\mathbb{E}(R(b^{\vec\Gamma},a^{\vec\Gamma}(b^{\vec\Gamma})))+W\sum_{k=1}^K\sum_{j=1}^{\Gamma_k}\alpha^{\vec\Gamma}(\vec\pi^{j}_k),
\end{align*}
that is, $g^{\vec\Gamma^0}(W)\geq g^{\vec\Gamma}(W)$ for all $\vec\Gamma\leq\vec\Gamma^0$ and $W_0<W$, in particular for all $W_0<W<W_1$. Using similar type of arguments and the definition of $W_1$ it can be seen that $g^{\vec\Gamma^0}(W_1)\geq g^{\vec\Gamma}(W_1)$ and again by monotonicity of $\sum_{k=1}^K\sum_{j=1}^{\Gamma_k}\alpha^{\vec\Gamma}(\vec\pi^{j}_k)$ we obtain $g^{\vec\Gamma^0}(W)\geq g^{\vec\Gamma}(W)$ for all $\vec\Gamma\geq\vec\Gamma^0$ and $W_0<W<W_1$. Hence, threshold policy $\vec\Gamma^0$ is optimal for $W_0<W<W_1$.
We now assume that $\vec\Gamma^{i-1}$ is the optimal threshold policy when $W_{i-1}<W<W_i$, {\it i.e.,} $g^{\vec\Gamma^i}(W)\geq g^{\vec\Gamma}(W)$ and we prove that $\vec\Gamma^{i}$ is optimal for $W_i<W<W_{i+1}$. From the definition of $W_i$ and the assumption that $\vec\Gamma^{i-1}$ is optimal for all $W_{i-1}<W<W_i$ we obtain
$
g^{\vec\Gamma^i}(W_i)=g^{\vec\Gamma^{i-1}}(W_i)\geq g^{\vec\Gamma}(W_i),
$
for all $\vec\Gamma$. Since $\sum_{k=1}^K\sum_{j=1}^{\Gamma_k}\alpha^{\vec\Gamma}(\vec\pi^{j}_k)$ is strictly increasing in $\vec\Gamma$ we obtain $g^{\vec\Gamma^i}(W)\geq g^{\vec\Gamma}(W)$ for all $\vec\Gamma\leq\vec\Gamma^i$ and $W_i<W<W_{i+1}$. Moreover, from the definition of $W_{i+1}$ we have $g^{\vec\Gamma^i}(W)\geq g^{\vec\Gamma}(W)$ for all $\vec\Gamma\geq\vec\Gamma^i$ and $W_i<W<W_{i+1}$. Therefore, $\vec\Gamma^i$ is the optimal threshold policy for all $W_i<W<W_{i+1}$.

Item 3) can now easily be proven using the same argument in each iteration step. This concludes the proof.

\subsection{Proof of Lemma~\ref{prop:expressionwhittle}}\label{app:explicit_Whittle}
Let us assume that in Step i, $\vec\Gamma^i$ is such that $\sum_{j=1}^K\Gamma_j^i=(\sum_{j=1}^K\Gamma_j^{i-1})+1$ and $\Gamma_j^i\geq\Gamma_j^{i-1}$ for all $j\in\{1,\ldots,K\}$, then there exists $u\in\{1,\ldots,K\}$ such that $\Gamma_u^i=\Gamma_u^{i-1}+1$ and $\Gamma_j^i=\Gamma_j^{i-1}$ for all $j\neq u$. 
By Proposition~\ref{prop:whittle} we have
\begin{align}\label{eq:whittle_index}
W_i=&\frac{\mathbb{E}(R(X^{\vec\Gamma^{i-1}},a(X^{\vec\Gamma^{i-1}})))-\mathbb{E}(R(X^{\vec\Gamma^{i}},a(X^{\vec\Gamma^{i}})))}{\sum_{j=1}^{K}\left(\sum_{r=1}^{\Gamma_j^i}\alpha^{\vec\Gamma^i}(\vec\pi^r_j)-\sum_{r=1}^{\Gamma_j^{i-1}}\alpha^{\vec\Gamma^{i-1}}(\vec\pi^r_j)\right)}.
\end{align}
The numerator in Equation~\eqref{eq:whittle_index}, after substitution of $\mathbb{E}(R(X^{\vec\Gamma},a(X^{\vec\Gamma})))=\sum_{k=1}^K\sum_{j=1}^{\Gamma_k}R(\vec\pi_k^{j},0)\alpha^{\vec\Gamma}(\vec\pi_k^{j})+R^1\sum_{k=1}^K\alpha^{\vec\Gamma}(\vec\pi_k^{\Gamma_k+1})$, reads
\begin{align}\label{eqeqeq}
&\sum_{k=1}^K\sum_{j=1}^{\Gamma_k^{i-1}}R(\vec\pi_k^{j},0)\left(\alpha^{\vec\Gamma^{i-1}}(\vec\pi_k^{j})-\alpha^{\vec\Gamma^{i}}(\vec\pi_k^j)\right)    -R(\vec\pi_u^{\Gamma_u^{i}},0)\alpha^{\vec\Gamma^i}(\vec\pi_u^{\Gamma_u^{i}})\nonumber\\
&+R^1\sum_{k=1}^K\left(\alpha^{\vec\Gamma^{i-1}}(\vec\pi_k^{\Gamma^{i-1}_k+1})
-\alpha^{\vec\Gamma^{i}}(\vec\pi_k^{\Gamma^{i}_k+1})\right).\end{align}
Since $\alpha^{\vec\Gamma}(\vec\pi^i_j)=\frac{\omega_j}{\sum_{r=1}^K(\Gamma_r+1)\omega_r}$, Equation~\eqref{eqeqeq} simplifies to
\begin{align}\label{eqeq}
&\omega_u\frac{\sum_{k=1}^K\sum_{j=1}^{\Gamma_k^{i-1}}R(\vec\pi_k^{j},0)\omega_k-R(\vec\pi_u^{\Gamma_u^i},0)\sum_{k=1}^K(\Gamma_k^{i-1}+1)\omega_k}{(\sum_{r=1}^K(\Gamma_r^i+1)\omega_r)\cdot(\sum_{r=1}^K(\Gamma_r^{i-1}+1)\omega_r)}\nonumber\\
&+\omega_u\frac{R^1\sum_{k=1}^K\omega_k}{(\sum_{r=1}^K(\Gamma_r^i+1)\omega_r)\cdot(\sum_{r=1}^K(\Gamma_r^{i-1}+1)\omega_r)}
\end{align}
Substituting the value of $\alpha^{\vec\Gamma}(\cdot)$ in the denominator of Equation~\eqref{eq:whittle_index}, the denominator reduces to
\begin{align}\label{eq:beste}
&-\sum_{k=1}^K\sum_{r=1}^{\Gamma^{i-1}_k}\omega_k\frac{\omega_u }{(\sum_{r=1}^K(\Gamma_r^i+1)\omega_r)\cdot(\sum_{r=1}^K(\Gamma_r^{i-1}+1)\omega_r)}\nonumber\\
& + \frac{\omega_u\sum_{r=1}^K(\Gamma_r^{i-1}+1)\omega_r}{(\sum_{r=1}^K(\Gamma_r^i+1)\omega_r)\cdot(\sum_{r=1}^K(\Gamma_r^{i-1}+1)\omega_r)}.
\end{align}
To obtain the explicit expression of Equation~\eqref{eq:whittle_index} it now suffices to divide the expression of the numerator as given by Equation~\eqref{eqeq} with the expression of the denominator as given by Equation~\eqref{eq:beste}, that is,
\begin{align*}
R^1+ \frac{\sum_{k=1}^K\sum_{j=1}^{\Gamma_k^{i-1}}R(\vec\pi_k^{j},0)\omega_k-R(\vec\pi_u^{\Gamma_u^i},0)\sum_{k=1}^K(\Gamma_k^{i-1}+1)\omega_k}{\sum_{k=1}^K\omega_k}.
\end{align*}
Since $\sum_{k=1}^K\omega_k=1$ and $\Gamma^{i}_u=\Gamma^{i-1}_u+1$ the explicit expression of $W_i$ is given by
\begin{align*}
W_i=&R^1+\sum_{k=1}^K\sum_{j=1}^{\Gamma_k^{i-1}}R(\vec\pi_k^{j},0)\omega_k   -R(\vec\pi_u^{\Gamma_u^{i-1}+1},0)\sum_{k=1}^K(\Gamma_k^{i-1}+1)\omega_k,
\end{align*}
which concludes the proof.

%\iffalse
\subsection{Proof of Lemma~\ref{lemma:valuefunctions}}\label{app:proofvaluefunctions}
We will proof the inequality $V_\beta^{max}(\cdot)\geq V_{\beta}(\cdot)$. The  inequality that  corresponds to $V_\beta^{min}$ can be proved similarly. Let us use the Value Iteration. Define  $V_{\beta,0}^{max}(\cdot)= V_{\beta,0}(\cdot)\equiv 0$, 
\begin{align*}
V_{\beta,t+1}(\vec\pi_j^\tau)=\max\{&R(\vec\pi_j^\tau,0)+W+\beta V_{\beta,t}(\vec\pi_j^{\tau+1});\nonumber\\
&R(\vec\pi_j^\tau,1) +\beta\sum_{i=1}^Kp_{ji}^{(\tau)}V_{\beta,t}(\vec\pi_i^1)\}, \text { and }\nonumber\\
V_{\beta,t+1}^{max}(\vec\pi_j^\tau)=\max\{&R(\vec\pi_j^\tau,0)+W+\beta V_{\beta,t}^{max}(\vec\pi_j^{\tau+1});\nonumber\\
&R(\vec\pi_j^\tau,1) +\beta\max_i\{V_{\beta,t}(\vec\pi_i^1)\}\}.
\end{align*}
Note that $V_{\beta,0}^{max}(\cdot)\geq V_{\beta,0}(\cdot)$. We will now prove the result by induction.
We assume $V_{\beta,t}^{max}(\cdot)\geq V_{\beta,t}(\cdot)$ and we prove $V_{\beta,t+1}^{max}(\cdot)\geq V_{\beta,t+1}(\cdot)$. To prove the latter it suffices to show 
\begin{align}\label{ineq:lemma2}
\max\{&R(\vec\pi_j^\tau,0)+W+\beta V_{\beta,t}(\vec\pi_j^{\tau+1});\nonumber\\
&R(\vec\pi_j^\tau,1) +\beta\sum_{i=1}^Kp_{ji}^{(\tau)}V_{\beta,t}(\vec\pi_i^1)\}, \nonumber\\
\leq \max\{&R(\vec\pi_j^\tau,0)+W+\beta V_{\beta,t}^{max}(\vec\pi_j^{\tau+1});\nonumber\\
&R(\vec\pi_j^\tau,1) +\beta\max_i\{V_{\beta,t}^{max}(\vec\pi_i^1)\}\}.
\end{align}
We first assume that the maximizer in both sides of Inequality~\eqref{ineq:lemma2} is the passive action. Then it suffices to show 
$$V_{\beta,t}(\vec\pi_j^{\tau+1})\leq V_{\beta,t}^{max}(\vec\pi_j^{\tau+1}),$$
which is satisfied due to the induction assumption. Let us now assume that the maximizer in both sides of  Inequality~\eqref{ineq:lemma2} is the  active action. Then to prove Inequality~\eqref{ineq:lemma2} we need to show that
\begin{align}\label{oneequation}
\sum_{i=1}^Kp_{ji}^{(\tau)}V_{\beta,t}(\vec\pi_i^1)\leq \max_i\{V_{\beta,t}^{max}(\vec\pi_i^1)\}.
\end{align}
We have
\begin{align*}
\sum_{i=1}^Kp_{ji}^{(\tau)}V_{\beta,t}(\vec\pi_i^1)&\leq\sum_{i=1}^Kp_{ji}^{(\tau)}V_{\beta,t}^{max}(\vec\pi_i^1)\nonumber\\ 
&\leq\max_i\{V_{\beta,t}^{max}(\vec\pi_i^1)\},
\end{align*}
which proves~\eqref{oneequation}. In the latter we have used the induction assumption in the first inequality and the fact that $p_{ji}^{(\tau)}$ is a probability distribution for all $\tau$ in the second inequality. The cases in which the maximizers are active and passive actions, and passive and active actions follow from the previous two cases. We have therefore proved that $V_{\beta,t}(\cdot)\leq V_{\beta,t}^{max}(\cdot)$ for all $t$. Since $\lim_{t\to\infty}V_{\beta,t}=V_\beta$ (and similarly for $V_\beta^{max}$) then $V_{\beta}(\cdot)\leq V_{\beta}^{max}(\cdot)$. This concludes the proof.

\subsection{Proof of Proposition~\ref{prop:performance_bounds}}\label{prop:proof_performancebounds}
 In Lemma~\ref{lemma:} we have proven that
\begin{align*}
&V_\beta^{max}(\vec\pi_j^\tau)\geq V_\beta(\vec\pi_j^\tau), \text{ and } V_\beta^{min}(\vec\pi_j^\tau)\leq V_\beta(\vec\pi_j^\tau),
\end{align*}
for all $\vec\pi_j^\tau\in\Pi$. From the latter we obtain
\begin{align*}
&V_\beta^{max}(\vec\pi_j^\tau)-V_\beta^{app}(\vec\pi_j^\tau)\geq V_\beta(\vec\pi_j^\tau)-V_\beta^{app}(\vec\pi_j^\tau),\nonumber\\ &V_\beta^{min}(\vec\pi_j^\tau)-V_\beta^{app}(\vec\pi_j^\tau)\leq V_\beta(\vec\pi_j^\tau)-V_\beta^{app}(\vec\pi_j^\tau),
\end{align*}
for all $\vec\pi_j^\tau\in\Pi$.
By~\cite[Theorem 8.10.7]{Puterman2005} we have that $g(W)=\lim_{\beta\to1}(1-\beta)V_\beta(\vec\pi_j^\tau)$ (similarly for $V_\beta^{max}, V_\beta^{min}$ and $V_\beta^{app}$). Therefore,
\begin{align*}
&\lim_{\beta\to1}(1-\beta)\left(V_\beta^{max}(\vec\pi_j^\tau)-V_\beta^{app}(\vec\pi_j^\tau)\right)\nonumber\\
\geq&\lim_{\beta\to1}(1-\beta)\left( V_\beta(\vec\pi_j^\tau)-V_\beta^{app}(\vec\pi_j^\tau)\right),\nonumber\\ 
&\lim_{\beta\to1}(1-\beta)\left(V_\beta^{min}(\vec\pi_j^\tau)-V_\beta^{app}(\vec\pi_j^\tau)\right)\nonumber\\
\leq& \lim_{\beta\to1}(1-\beta)\left(V_\beta(\vec\pi_j^\tau)-V_\beta^{app}(\vec\pi_j^\tau)\right),
\end{align*}
for all $\vec\pi_j^\tau\in\Pi$, that is,
\begin{align*}
g^{max}(W)-g^{app}(W)&\geq g(W)-g^{app}(W)\nonumber\\
&\geq g^{min}(W)-g^{app}(W). \nonumber\\ 
\end{align*}
Define $D(W):= \max\{1-\frac{g^{app}(W)}{g^{max}(W)},\frac{g^{app}(W)}{g^{min}(W)}-1\}$. The explicit expression of $D(W)$ can be found in Appendix~\ref{appendix:explicit_D}. Hence,
\begin{align*}
&\bigg| 1-\frac{g^{app}(W)}{g(W)}\bigg|\leq D(W).\nonumber\\ 
\end{align*}
%\fi

\subsection{Explicit expression of $D(W)$}\label{appendix:explicit_D}
To derive the explicit expression of $D(W)$, we need to obtain the expressions of $g^{min}(W), g^{max}(W)$ and $g^{app}(W)$. From the proof of Lemma~\ref{lemma1} and the results in Appendix~\ref{append:discountedLimit}, we have that
$$
g^{app}(W)=\lim_ {\beta\to1}(1-\beta)V_\beta^{app}(\vec\pi_1^1),
$$
where $V_\beta^{app}(\vec\pi_1^1)$ is as given in Equation~\eqref{eq:V_1_1} (after adding the superscript $app$). Note that when computing the limit as $\beta\to1$ we encounter a $0/0$ indetermination. After applying L'Hopital's rule it can easily be seen that
$$
g^{app}(W)=\frac{R^1+\sum_{k=1}^Kp_k^s\sum_{i=1}^{\tau_k(W)}(R(\vec\pi_k^i,0)+W)}{\sum_{k=1}^K(\tau_k(W)+1)p_k^s}.
$$
To obtain the closed-form expressions of $g^{max}(W)$ and $g^{min}(W)$ we need to follow the same steps as those used in the derivation of $g^{app}(W)$. That is, we need to (i) show that an optimal solution of Equations~\eqref{eq:bellman_approximations} and~\eqref{eq:bellman_approximations_min} is a threshold type of policy, (ii) obtain the explicit expressions of $V_\beta^{max}(\cdot)$ and $V_\beta^{min}(\cdot)$, (iii) prove conditions 8.10.1-8.10.4' in Puterman~\cite{Puterman2005} to be satisfied, and finally, (iv) compute $g^{min}(W)$ by taking the limit of $(1-\beta)V_\beta^{min}(\cdot)$ as $\beta\to1$ (similarly for $g^{max}(W)$).
The first three steps can easily be done using the same arguments that have been used for Approximation~1. Step (i) is similar to the proof of Theorem~\ref{prop:threshold}, step (ii) can be done using the arguments in the proof of Lemma~\ref{lemma1}, and step (iii) can be proven through the ideas exploited in Appendix~\ref{append:discountedLimit}. After showing the first three steps one obtains
\begin{align*}
g^{max}(W)&=\frac{R^1+\sum_{i=1}^{\overline\tau_{\sigma_{max}}(W)}(R(\vec\pi_{\sigma_{max}}^i,0)+W)}{\overline\tau_{\sigma_{max}}(W)+1},\\
g^{min}(W)&=\frac{R^1+\sum_{i=1}^{\underline\tau_{\sigma_{min}}(W)}(R(\vec\pi_{\sigma_{min}}^i,0)+W)}{\underline\tau_{\sigma_{min}}(W)+1},
\end{align*}
where $\sigma_{max}=\argmax_j\{(R^1+\sum_{i=1}^{\overline\tau_j(W)}(R(\vec\pi_{j}^i,0)+W))/(\overline\tau_j(W)+1)\}$,  similarly, $\sigma_{min}=\argmin_j\{(R^1+\sum_{i=1}^{\underline\tau_j(W)}(R(\vec\pi_{j}^i,0)+W))/(\underline\tau_j(W)+1)\}$, and $\overline\tau_i(W)$ and $\underline\tau_i(W)$ refer to the optimal threshold policies of problems ~\eqref{eq:bellman_approximations} and~\eqref{eq:bellman_approximations_min}, respectively. Note that the optimal threshold policies $\tau_i(W),\overline\tau_i(W)$ and $\underline\tau_i(W)$, can be computed from the Bellman equations by equating the value obtained from passive action and the value obtained from active action. Having said that, we obtain
\begin{align}
D(W)=\max\bigg\{&1-\frac{\overline\tau_{\sigma_{max}}(W)+1}{\sum_{k=1}^K(\tau_k(W)+1)p_k^s}   \cdot\frac{R^1+\sum_{k=1}^Kp_k^s\sum_{i=1}^{\tau_k(W)}(R(\vec\pi_k^i,0)+W)}{R^1+\sum_{i=1}^{\overline\tau_{\sigma_{max}}(W)}(R(\vec\pi_{\sigma_{max}}^i,0)+W)};\nonumber\\
&\frac{\overline\tau_{\sigma_{min}}(W)+1}{\sum_{k=1}^K(\tau_k(W)+1)p_k^s}  \cdot\frac{R^1+\sum_{k=1}^Kp_k^s\sum_{i=1}^{\tau_k(W)}(R(\vec\pi_k^i,0)+W)}{R^1+\sum_{i=1}^{\overline\tau_{\sigma_{min}}(W)}(R(\vec\pi_{\sigma_{min}}^i,0)+W)}-1
\bigg\}.
\end{align}

\subsection{Proof of Lemma~\ref{eq:linear_fluid_sys}}\label{proof_linear_fluid}
Throughout the proof we will assume for sake of clarity, $W(\vec\pi_{1}^{\ell_1^{*},1})=W^*$, $W(\vec\pi_{j}^{\ell_j^{*}-1,1})<W^*<W(\vec\pi_{j}^{\ell_j^{*},1})$ for all $j\in\{2,\ldots,K\}$ and  $W(\vec\pi_{j}^{m_j^{*}-1,2})<W^*<W(\vec\pi_{j}^{m_j^{*},2})$ for all $j\in\{1,\ldots,K\}$. That is, for all $j=2,\ldots,K$  there exists $\ell_j^*\in\{(j-1)\overline\tau+1, \ldots,j\overline\tau\}$ such that $REL$ prescribes to activate all states  $\vec\pi_j^{i,1}$ for which $i\geq \ell_j^*-(j-1)\overline\tau$, and for all $j=1,\ldots,K$ there exists $m_j^*\in\{(K+j-1)\overline\tau+2,\ldots,(K+j)\overline\tau+1\}$ such that $REL$ prescribes to activate all states  $\vec\pi_j^{i,2}$ for which $i\geq m_j^*-(j-1)\overline\tau$. In state $\vec\pi_{1}^{\ell_1^*,1}$ the policy $REL$ prescribes to activate the users in that state with probability $\rho\in(0,1)$.
\begin{remark} Observe that we exclude the possibility $\rho=1$. It can be seen that a non-randomized policy, which corresponds to $\rho=1$, is optimal only for a finite number of $\lambda$s, Weber et al. \cite{WW90}. 
\end{remark}

We have that
\begin{align}\label{thedifferences}
&\mathbf{y}(t+1)-\mathbf{y}(t)\bigg|_{\mathbf{y}(t)=\mathbf{y}}=\sum_{i=1}^{2(K\overline\tau+1)}\sum_{j=1}^{2(K\overline\tau+1)}q_{ij}(\mathbf{y})\vec e_{ij}y_i\nonumber\\
&=\sum_{i=1}^{\ell_1^*-1}\sum_{j=1}^{2(K\overline\tau+1)}q_{ij}(\mathbf{y})\vec e_{ij}y_i    +\sum_{i=\ell_1^*+1}^{2(K\overline\tau+1)}\sum_{j=1}^{2(K\overline\tau+1)}q_{ij}(\mathbf{y})\vec e_{ij}y_i+y_{\ell_1^*}\sum_{j=1}^{2(K\overline\tau+1)}q_{\ell_1^*j}(\mathbf{y})\vec e_{\ell_1^*j}\nonumber\\
&=\sum_{i\neq\ell_1^*}\sum_{j=1}^{2(K\overline\tau+1)}q_{ij}(\mathbf{y})\vec e_{ij}     +y_{\ell_1^*}\sum_{j=1}^{2(K\overline\tau+1)}[g_{\ell_1^*}(\mathbf{y})q_{\ell_1^*j}^1+(1-g_{\ell_1^*}(\mathbf{y}))q_{\ell_1^*j}^0)]\vec e_{\ell_1^*j}\nonumber\\
&=\sum_{i\neq \ell_1^*}\sum_{j=1}^{2(K\overline\tau+1)}q_{ij}(\mathbf{y})\vec e_{ij}y_i+y_{\ell_1^*}\sum_{j=1}^{2(K\overline\tau+1)}q^0_{\ell_1^*j}\vec e_{\ell_1^*j}    + g_{\ell_1^*}(\mathbf{y})y_{\ell_1^*}\sum_{j=1}^{2(K\overline\tau+1)}[q_{\ell_1^*j}^1-q_{\ell_1^*j}^0]\vec e_{\ell_1^*j}.
\end{align}
The second inequality in the latter equation follows from the definition of $q_{ij}(\mathbf{y})$ in Equation~\eqref{eq:probs}. Note that by the definitions of $\ell_1^*$ (defined in the beginning of this section) and $g_{\ell_1^*}(\mathbf{y})$ imply
\begin{align*}
g_{\ell_1^*}(\mathbf{y})y_{\ell_1^*}=\lambda-\sum_{i:W_i>W^*}y_i.
\end{align*}
Substituting the latter in Equation~\eqref{thedifferences} we obtain
\begin{align*}
&\mathbf{y}(t+1)-\mathbf{y}(t)\bigg|_{\mathbf{y}(t)=\mathbf{y}}=\sum_{i\neq \ell_1^*}\sum_{j=1}^{2(K\overline\tau+1)}q_{ij}(\mathbf{y})\vec e_{ij}y_i+y_{\ell_1^*}\sum_{j=1}^{2(K\overline\tau+1)}q^0_{\ell_1^*j}\vec e_{\ell_1^*j} + (\lambda-\sum_{i:W_i>W^*}y_i)\sum_{j=1}^{2(K\overline\tau+1)}[q_{\ell_1^*j}^1-q_{\ell_1^*j}^0]\vec e_{\ell_1^*j}.
\end{align*}
In the latter equation $q_{ij}(\mathbf{y})$ for all $i\neq\ell_1^*$ stays constant for all $\mathbf{y}\in\overline Y_{W^*}$, since $g_{i}(\mathbf{y})$ for all $i\neq\ell_1^*$ is either 0 or 1 and therefore independent of $\mathbf{y}$.

\subsection{Proof of Lemma~\ref{lema:hola}}\label{append_fixpoint}
We want to show that $\mathbf{\theta}_{\delta,\lambda}$ is the unique zero of $\overline Q\mathbf{y}+\overline d=0$.

It is clear that $\overline Q\mathbf{\theta}_{\delta,\lambda}+ \overline d=0$, since $\mathbf{\theta}_{\delta,\lambda}$ is the mean of the random vector to which the system $\mathbf{Y}^{N}(t)$  under $REL$ converges, and the fluid system is defined by the mean drift of the system $\mathbf{Y}^{N}(t)$. We assume there exists $\overline y\neq \mathbf{\theta}_{\delta,\lambda}$ such that $\overline Q \overline y+\overline d=0$, then there exists a policy characterized by $\overline W$ and $\overline \rho$ (i.e., allocate a pilot to all users with $W_i>\overline W$, idle if $W_i<\overline W$ and randomize with probability $\overline \rho$ if $W_i=\overline W$) for which the steady-state vector is given by $\overline y$ and the average fraction of activated users equals $\lambda$. This is however in contradiction with the indexability property which implies that a unique $\overline  W$ and $\overline\rho$ exist for each $\lambda$ (Lemma 1 in \cite{WW90}).

To conclude the proof, we mention that $\mathbf{\theta}_{\delta,\lambda}$ is independent of $N$, the proof follows from Lemma~4 in~\cite{OuyangEryilShroff16}.

\subsection{Proof of Proposition~\ref{prop:local_optimality_WIP}}\label{proof_local}

The local asymptotic optimality can be obtained in two steps.  

\noindent
{\it Step~1:} We prove that for an initial state $\mathbf{y}(0)\in\mathcal{N}(\mathbf{\theta}_{\delta,\lambda})$ the fluid system converges to $\mathbf{\theta}_{\delta,\lambda}$.  

\noindent
{\it Step~2:} We show that the system $\mathbf{Y}^N(t)$ can be made arbitrarily close to the fluid system $\mathbf{y}(t)$ as $N\to\infty$. 

\subsubsection{Step~1}

To prove {\it Step~1} we are going to (i) obtain the explicit expression of the linear fluid system, (ii) prove the eigenvalues of this system, i.e., $\iota$, to satisfy $|\iota+1|<1$, and (iii) we will prove that $\mathbf{y}(t)\to\mathbf{\theta}_{\delta,\lambda}$.

We are now going to write the explicit expression of the difference $\mathbf{y}(t+1)-\mathbf{y}(t)$. For simplicity, we reduce the dimension of vector $\mathbf{y}(t)$ by one. This reduction can be done due to  the fact that $
\sum_{i=1}^{K\overline\tau+1}y_i=\delta_1,
$
for all $\mathbf{y}\in\mathcal{Y}$ and the fact that if $\mathbf{y}(0)\in\mathcal{Y}$ then $\mathbf{y}(t)\in\mathcal{Y}$. For all $\mathbf{y}\in\mathcal{Y}$ we define $\mathbf{\hat y}=(y_1,\ldots,y_{\ell_1^*-1},y_{\ell_1^*+1},\ldots,y_{2(K\overline\tau+1)})$. With a bit of abuse of notation, we let $\vec e_{ij}$ be the vector of dimension $2K\overline\tau +1$ with  all entries 0s except the $i^{\text{th}}$ term which equals -1 and the $j^{\text{th}}$ which equals 1, and we let $q_{ij}(\mathbf{\hat y})$ be defined as in Equation~\eqref{eq:probs} for vectors of dimension $2K\overline\tau +1$. Therefore, we have
\begin{align*}
&\mathbf{\hat y}(t+1)-\mathbf{\hat y}(t)\bigg|_{\mathbf{\hat y}(t)=\mathbf{\hat y}}=\sum_{i\neq \ell_1^*}\sum_{j\neq \ell_1^*}q_{ij}(\mathbf{\hat y})\vec e_{ij} y_i\\
&\quad+\left(\delta_1-\sum_{i=1}^{\ell_1^*-1}y_i-\sum_{i=\ell_1^*+1}^{2(K\overline\tau+1)}y_i\right)\sum_{j\neq\ell_1^*}q^0_{\ell_1^*j}\vec e_{\ell_1^*j}\\
&\quad + (\lambda-\sum_{i:W_i>W^*}y_i)\sum_{j\neq\ell_1^*}[q_{\ell_1^*j}^1-q_{\ell_1^*j}^0]\vec e_{\ell_1^*j}\\
&=\sum_{i:W_i<W^*}\sum_{j\neq\ell_1^*}[q_{ij}(\mathbf{\hat y})\vec e_{ij}-q_{\ell_1^*j}^0\vec e_{\ell_1^*j}]y_i\\
&\quad+\sum_{i:W_i>W^*}\sum_{j\neq\ell_1^*}[q_{ij}(\mathbf{\hat y})\vec e_{ij}-q_{\ell_1^*j}^1\vec e_{\ell_1^*j}]y_i\\
&\quad+\delta_1 \sum_{j\neq\ell_1^*}q^0_{\ell_1^*j}\vec e_{\ell_1^*j} + \lambda \sum_{j\neq\ell_1^*}[q_{\ell_1^*j}^1-q_{\ell_1^*j}^0]\vec e_{\ell_1^*j}.
\end{align*}
Where we used Equation~\eqref{thedifferences},  $
\sum_{i=1}^{K\overline\tau+1}y_i=\delta_1
$, and $g_{\ell_1^*}(\mathbf{y})y_{\ell_1^*}=\lambda-\sum_{j:W_j>W^*}y_i$.
One can then derive the expression 
\begin{align}\label{low_dimension_fluid}
\mathbf{\hat y}(t+1)-\mathbf{\hat y}(t)=\hat Q\mathbf{\hat y}+\hat d,
\end{align}
where $\hat d = \delta_1 \sum_{j\neq\ell_1^*}q^0_{\ell_1^*j}\vec e_{\ell_1^*j} + \lambda \sum_{j\neq\ell_1^*}[q_{\ell_1^*j}^1-q_{\ell_1^*j}^0]\vec e_{\ell_1^*j}$, and 
$$
\hat Q=\begin{bmatrix}
Q_1^1& \ldots &Q_K^1&Q_1^2 &\ldots&Q_K^2\\
\vec 0 & \ldots &\vec 0&O_1^2&\ldots&O_K^2 
\end{bmatrix}
$$
The explicit expressions of $ Q_k^c$ for all $k\in\{1,\ldots,K\}$ and all $c\in\{1,2\}$, can be found in~\eqref{matrix_Q}. In order to simplify the expression in~\eqref{matrix_Q} we have used the following notation, $0_{n\times m}$ represents the matrix of size $n\times m$ whose entries are all $0$ and $-I_n$ refers to the negative identity matrix of size $n\times n$. 

\begin{table*}
\begin{minipage}{0.95\textwidth}\centering
%\caption{Explicit expression of matrix  $\hat Q$ for Approximation~1}\label{table}
\begin{align}\label{matrix_Q}
&Q_1^1=\begin{bmatrix}
A_{\ell_1^*-1}&0_{(\ell_1^*-1)\times(\tau-\ell_1^*)}\\
B_{(\tau-\ell_1^*)\times (\ell_1^*-1)}&-I_{\tau-\ell_1^*}\\
0_{((K-1)\tau+1)\times\ell_1^*-1}&0_{((K-1)\tau+1)\times(\tau-\ell_1^*)}\\
\end{bmatrix},\hbox{ where }
A_{m}=\overbrace{\begin{bmatrix}
-1&0&\ldots&0&0\\
1&-1&\ldots&0&0\\
0&1&\ldots&0&0\\
\vdots&\vdots&\ddots&\vdots&\vdots\\
0&0&\ldots&1&-1
\end{bmatrix}}^{m},\quad
B_{n\times m}=\overbrace{\begin{bmatrix}
-1&\ldots&-1&0\\
0&\ldots&0&0\\
\vdots&\ddots&\vdots&\vdots\\
0&\ldots&0&0
\end{bmatrix}}^{m},\nonumber\\
&Q_i^1=\begin{bmatrix}
0_{(\ell_1^*-1)\times\ell_i^*} & 0_{(\ell_1^*-1)\times(\tau-\ell_i^*)}\\
B_{(\tau-\ell_1^*)\times\ell_i^*}&0_{(\tau-\ell_1^*)\times (\tau-\ell_i^*)}\\
0_{(i-2)\tau\times\ell_i^*}&0_{(i-2)\tau\times(\tau-\ell_i^*)}\\
A_{\ell_i^*}&0_{\ell_i^*\times(\tau-\ell_i^*)}\\
0_{(\tau-\ell_i^*)\times\ell_i^*}& -I_{\tau-\ell_i^*}\\
0_{(K\tau+1-i\tau)\times\ell_i^*}&0_{(K\tau+1-i\tau)\times(\tau-\ell_i^*)}
\end{bmatrix},\forall\,\, i\in\{2,\ldots,K-1\},
Q_K^1=\begin{bmatrix}
0_{(\ell_1^*-1)\times\ell_K^*} & 0_{(\ell_1^*-1)\times(\tau+1-\ell_K^*)}\\
B_{(\tau-\ell_1^*)\times\ell_K^*}&0_{(\tau-\ell_1^*)\times (\tau+1-\ell_K^*)}\\
0_{(K-2)\tau\times\ell_K^*}&0_{(K-2)\tau\times(\tau+1-\ell_K^*)}\\
A_{\ell_K^*}&0_{\ell_K^*\times(\tau+1-\ell_K^*)}\\
0_{(\tau+1-\ell_K^*)\times\ell_K^*}& -I_{\tau+1-\ell_K^*}
\end{bmatrix}.\nonumber\\
&Q_i^2=\begin{bmatrix}
0_{(\ell_1^*-1)\times m_i^*} & 0_{(\ell_1^*-1)\times(\tau-m_i^*)}\\
B_{(\tau-\ell_1^*)\times m_i^*}&0_{(\tau-\ell_1^*)\times (\tau-m_i^*)}\\
0_{((K-1)\tau+1)\times m_i^*}&0_{((K-1)\tau+1)\times(\tau-m_i^*)}
\end{bmatrix},\forall\,\, i\in\{1,\ldots,K-1\},
Q_K^2=\begin{bmatrix}
0_{(\ell_1^*-1)\times m_K^*} & 0_{(\ell_1^*-1)\times(\tau+1-m_K^*)}\\
B_{(\tau-\ell_1^*)\times m_K^*}&0_{(\tau-\ell_1^*)\times (\tau+1-m_K^*)}\\
0_{((K-1)\tau+1)\times m_K^*}&0_{((K-1)\tau+1)\times(\tau+1-m_K^*)}\\
\end{bmatrix},\nonumber\\
&O_i^2=\begin{bmatrix}
0_{(i-1)\tau\times m_i^*}&0_{(i-1)\tau\times(\tau-m_i^*)}\\
A_{m_i^*}&0_{m_i^*\times(\tau-m_i^*)}\\
0_{(\tau-m_i^*)\times m_i^*}& -I_{\tau-m_i^*}\\
0_{(K\tau+1-i\tau)\times m_i^*}&0_{(K\tau+1-i\tau)\times(\tau-m_i^*)}
\end{bmatrix},\forall\,\, i\in\{1,\ldots,K-1\},
O_K^2=\begin{bmatrix}
0_{(K-1)\tau\times m_K^*}&0_{(K-1)\tau\times(\tau+1-m_K^*)}\\
A_{m_K^*}&0_{m_K^*\times(\tau+1-m_K^*)}\\
0_{(\tau+1-m_K^*)\times m_K^*}& -I_{\tau+1-m_K^*}
\end{bmatrix},
\end{align}
\hrule
\end{minipage}
\end{table*}

In the next lemma we prove that the eigenvalues of $\hat Q$ satisfy $|\iota+1|<1$.
\begin{lemma}\label{eigen}
The eigenvalues of $\hat Q$, i.e., $\iota$, satisfy $|\iota+1|<1$.
\end{lemma}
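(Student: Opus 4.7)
The plan is to exploit the structural properties of $\hat Q$ induced by the two-class partition and the substitution of $y_{\ell_1^*}$. First I would observe that a user's class is a permanent attribute and every belief transition preserves class, so the substitution $y_{\ell_1^*} = \delta_1 - \sum_{i \neq \ell_1^*,\, i \in \text{class 1}} y_i$ modifies only class-$1$ rows of $\hat Q$ and leaves the class-$2$ block untouched. Consequently $\hat Q$ is block upper triangular with respect to the class partition,
\[
\hat Q \;=\; \begin{bmatrix} \hat Q^{11} & \hat Q^{12} \\[2pt] 0 & \hat Q^{22} \end{bmatrix},
\]
so that $\sigma(\hat Q)=\sigma(\hat Q^{11})\cup\sigma(\hat Q^{22})$ and each block can be handled independently.

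For the class-$2$ block, Assumption~\ref{assumptionWhittle} together with the strict inequalities $W(\vec\pi_j^{\tau,2})\neq W^*$ at all class-$2$ states in $\overline Y_{W^*}$ implies that the $WIP$-induced transitions on class-$2$ are deterministic, so $M^{22}:=\hat Q^{22}+I$ is a column-stochastic matrix governing an irreducible, aperiodic finite Markov chain on the truncated class-$2$ belief space. Aperiodicity follows from the fact that $\vec\pi^{s,2}$ is reachable from every state and can be revisited via cycles of coprime lengths through the active transitions to the vectors $\vec\pi_r^{1,2}$. By Perron--Frobenius, $M^{22}$ has a unique simple eigenvalue at $1$, corresponding to the conservation $\sum_{i\in\text{class }2}y_i=\delta_2$ preserved by the dynamics, with all other eigenvalues strictly inside the open unit disk; restricted to the invariant affine subspace preserved by the linear system, every relevant eigenvalue $\iota$ of $\hat Q^{22}$ therefore satisfies $|\iota+1|<1$.

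For the class-$1$ block I would show that $M^{11}:=\hat Q^{11}+I$ is strictly sub-stochastic on the reduced state space. Intuitively, the elimination of $\vec\pi_1^{\ell_1^*,1}$ through the conservation $\sum_{i\in\text{class }1}y_i=\delta_1$ destroys the Perron eigenvalue $1$ of the full class-$1$ stochastic matrix: every column of $M^{11}$ whose original $q^0$- or $q^1$-transition placed mass at $\ell_1^*$ now has column sum strictly less than $1$ (this is precisely the contribution of the $B$-blocks in the explicit form given above), while the remaining column sums stay $\le 1$. Combined with the irreducibility of the class-$1$ chain under $WIP$ in $\overline Y_{W^*}$, Perron--Frobenius then yields $\rho(M^{11})<1$, i.e.\ $|\iota+1|<1$ for every $\iota\in\sigma(\hat Q^{11})$.

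The main obstacle is the precise bookkeeping in the class-$1$ step, i.e.\ verifying that the $B$-blocks yield strictly sub-stochastic column sums and do not accidentally reintroduce a unit eigenvalue through cancellation. A clean way to handle this is to pass to a weighted $\ell^1$-norm $\|\mathbf{\hat y}\|_* = \sum_i w_i|\hat y_i|$ with positive weights $w_i$ chosen from the steady-state $\mathbf{\theta}_{\delta,\lambda}$, and to establish $\|M^{11}\mathbf{\hat y}\|_*<\|\mathbf{\hat y}\|_*$ for every nonzero $\mathbf{\hat y}$ in the invariant subspace. This strict contraction delivers the required spectral-radius bound and completes the proof of the lemma.
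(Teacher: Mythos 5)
Your opening move --- splitting $\hat Q$ into a block upper-triangular form along the class partition so that $\sigma(\hat Q)=\sigma(\hat Q^{11})\cup\sigma(\hat Q^{22})$ --- is exactly how the paper's proof begins. After that the two arguments diverge, and yours has two genuine problems. First, the class-2 step proves the wrong thing: if $M^{22}=\hat Q^{22}+I$ really were an irreducible column-stochastic matrix, Perron--Frobenius would give it a simple eigenvalue at $1$, i.e.\ an eigenvalue $\iota=0$ of $\hat Q$ with $|\iota+1|=1$, which is precisely what the lemma forbids. Retreating to ``the invariant affine subspace'' does not prove the lemma as stated, and the downstream argument (Lemma~\ref{a_lemma} needs $(\hat Q+I)^t\to 0$) relies on the full spectral bound. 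Moreover your premise does not describe the paper's matrix: in~\eqref{matrix_Q} the class-2 diagonal block is assembled only from the bidiagonal blocks $A_m$ (which equal $S-I$ with $S$ a nilpotent shift) and from $-I$ blocks; the steady-state return probabilities $p^{s,2}_r$ do not appear there, so $\hat Q^{22}+I$ is nilpotent rather than stochastic. Second, the class-1 step leans on Perron--Frobenius and sub-stochasticity for $M^{11}=\hat Q^{11}+I$, but $M^{11}$ is not a nonnegative matrix: the $B$-blocks you cite consist of rows of $-1$'s, so neither Perron--Frobenius nor a column-sum/weighted-$\ell^1$ contraction applies as stated; and you explicitly leave the ``precise bookkeeping'' --- which is the entire content of the lemma --- unresolved.

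The paper's own proof is a short direct computation: ordering the states so that $\vec\pi_1^{\ell_1^*+1,1},\ldots$ come last makes $\hat Q-\iota I$ block triangular with diagonal blocks $A_m-\iota I$ and $-I-\iota I$, each contributing a factor $(-1-\iota)^{\text{size}}$, whence $\det(\hat Q-\iota I)=(-1-\iota)^{2K\overline\tau+1}$ and every eigenvalue equals $-1$ exactly. If you want to retain a spectral-radius style argument you must work with $\hat Q+I$ directly and show it is nilpotent (or bound its spectral radius by means valid for matrices with entries of both signs); the stochastic-matrix machinery you invoke is not available here.
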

\begin{proof}
 We compute the eigenvalues of $\hat Q$, that is,
compute $\iota$ the solution of 
\begin{align*}
\hbox{det}(\hat Q-\iota I_{2K\overline\tau+1})=&\hbox{det}([Q_{1}^1,\ldots,Q_K^1]-\iota I_{K\overline\tau})\\
&\cdot\hbox{det}([O_{1}^2,\ldots,O_K^2]-\iota I_{K\overline\tau+1})=0,
\end{align*}
due to the property of block matrices. Note that matrices $[Q_{1}^1,\ldots,Q_K^1]$ and $[O_1^2,\ldots,O_K^2]$ are square matrices. Analyzing the structures of $Q_i^1$ and $O_i^2$ for all $i$ we obtain that
  \begin{align}\label{eq:determinant}
&\hbox{det}(\hat Q-\iota I_{2K\overline\tau+1})\nonumber\\
 &=\hbox{det}(A_{\ell_1^*-1}-\iota I_{\ell_1^*1})\hbox{det}(A_{\ell_2^*}-\iota I_{\ell_2^*})\cdot\ldots\nonumber\\
&\quad\cdot\hbox{det}(A_{\ell_K^*}-\iota I_{\ell_K^*})\cdot \hbox{det}(-I_{\overline\tau-\ell_1^*}-\iota I_{\overline\tau-\ell_1^*})\cdot\ldots\nonumber\\
&\quad\cdot\hbox{det}(-I_{\overline\tau-\ell_{K-1}^*}-\iota I_{\overline\tau-\ell_{K-1}^*})\hbox{det}(-I_{\overline\tau+1-\ell_{K}^*}-\iota I_{\overline\tau+1-\ell_{K}^*})\nonumber\\
 &\quad\cdot\hbox{det}(A_{m_1^*-1}-\iota I_{m_1^*})\hbox{det}(A_{m_2^*}-\iota I_{m_2^*})\cdot\ldots\nonumber\\
&\quad\cdot\hbox{det}(A_{m_K^*}-\iota I_{m_K^*})\cdot \hbox{det}(-I_{\overline\tau-m_1^*}-\iota I_{\overline\tau-m_1^*})\cdot\ldots\nonumber\\
&\quad\cdot\hbox{det}(-I_{\overline\tau-m_{K-1}^*}-\iota I_{\overline\tau-m_{K-1}^*})\nonumber\\
&\quad\cdot\hbox{det}(-I_{\overline\tau+1-m_{K}^*}-\iota I_{\overline\tau+1-m_{K}^*})=0.
\end{align}
The latter is obtained exploiting the properties of block matrices. 
It is easy to see that Equation~\eqref{eq:determinant} reduces to 
\begin{align*}
\hbox{det}(\hat Q-\iota I_{2K\overline\tau+1})=(-1-\iota)^{2K\overline\tau+1}=0,
\end{align*}
therefore, all eigenvalues equal $-1$, and consequently $|\iota+1|<1$. This concludes the proof.

\end{proof}

Having proven that for all eigenvalues of the system $|\iota+1|<1$ we prove the following.

\begin{lemma}\label{a_lemma}
Let $y(0)=\mathbf{y}$ and assume there exists $\varepsilon>0$ such that, if $\mathbf{y}(0)\in\mathcal{N}_\varepsilon(\mathbf{\theta}_{\delta,\lambda})\subset \overline Y_{W^*}$, that is, the initial point is in the neighborhood of $\mathbf{\theta}_{\delta,\lambda}$ then 
(1) $y(t)\in\mathcal{Y}_W$ for all $t$, and (2) $y(t)\to \mathbf{\theta}_{\delta,\lambda}$ as $t\to\infty$.   
\end{lemma}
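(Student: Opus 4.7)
The plan is to treat the fluid recursion as a linear dynamical system inside the region $\overline Y_{W^*}$, exploit the spectral information already established, and then shrink the initial neighborhood to prevent the trajectory from leaving the linearity region. Concretely, by Lemma~\ref{eq:linear_fluid_sys}, as long as $\mathbf{y}(t)\in\overline Y_{W^*}$ we have
\begin{equation*}
\mathbf{y}(t+1) = (I+\overline Q)\,\mathbf{y}(t) + \overline d,
\end{equation*}
and by Lemma~\ref{lema:hola} the point $\mathbf{\theta}_{\delta,\lambda}$ is the unique solution of $\overline Q\,\mathbf{y}+\overline d=0$. Subtracting the fixed-point identity from the recursion and iterating gives, for all $t$ such that the trajectory has remained in $\overline Y_{W^*}$ up to time $t$,
\begin{equation*}
\mathbf{y}(t)-\mathbf{\theta}_{\delta,\lambda} = (I+\overline Q)^{t}\bigl(\mathbf{y}(0)-\mathbf{\theta}_{\delta,\lambda}\bigr).
\end{equation*}

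The key analytical input is Lemma~\ref{eigen}: every eigenvalue $\iota$ of $\hat Q$ satisfies $|\iota+1|<1$, so the spectral radius of $I+\hat Q$ is strictly less than one. Transferring this to the full matrix $I+\overline Q$ is immediate because the coordinate that was eliminated to form $\hat Q$ corresponds to the conserved quantities $\sum_i y_i^{c,N}=\delta_c$, which are preserved by the dynamics and hence do not contribute to the deviation $\mathbf{y}(t)-\mathbf{\theta}_{\delta,\lambda}$. Standard matrix analysis (Gelfand's formula or a Jordan-form argument) then yields constants $C\geq 1$ and $\rho\in(0,1)$, independent of $t$ and of the initial condition, such that
\begin{equation*}
\|\mathbf{y}(t)-\mathbf{\theta}_{\delta,\lambda}\|\leq C\rho^{t}\,\|\mathbf{y}(0)-\mathbf{\theta}_{\delta,\lambda}\|.
\end{equation*}

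To obtain both (1) and (2) simultaneously I would pick $\varepsilon'=\varepsilon/C$, where $\varepsilon>0$ is an arbitrary radius with $\mathcal{N}_\varepsilon(\mathbf{\theta}_{\delta,\lambda})\subset\overline Y_{W^*}$ (such $\varepsilon$ exists because $\mathbf{\theta}_{\delta,\lambda}$ lies in the interior of $\overline Y_{W^*}$ thanks to $\rho\in(0,1)$ being the randomization parameter of $REL$). An inductive argument then shows that if $\mathbf{y}(0)\in\mathcal{N}_{\varepsilon'}(\mathbf{\theta}_{\delta,\lambda})$, the linear formula above applies at every step and gives $\|\mathbf{y}(t)-\mathbf{\theta}_{\delta,\lambda}\|\leq C\varepsilon'=\varepsilon$ for all $t$, so $\mathbf{y}(t)\in\mathcal{N}_\varepsilon(\mathbf{\theta}_{\delta,\lambda})\subset\overline Y_{W^*}$, proving (1); letting $t\to\infty$ and using $\rho<1$ proves (2). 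Relabeling $\varepsilon'$ as $\varepsilon$ completes the argument.

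The main obstacle is precisely the self-referential nature of the argument: the linear recursion is only valid inside $\overline Y_{W^*}$, yet to prove the trajectory stays in $\overline Y_{W^*}$ one needs the decay estimate that comes from the linear recursion. The shrinkage $\varepsilon'=\varepsilon/C$ resolves this, but requires care because convergence under $I+\overline Q$ need not be monotone in norm (only eventually contractive). A secondary, minor, point is verifying that the spectral bound on $\hat Q$ truly suffices for $\overline Q$; this reduces to checking that the eliminated coordinate is genuinely decoupled from the dynamics, which follows from the conservation of $\sum_i y_i^{c,N}=\delta_c$ built into the transition structure in Table~I.
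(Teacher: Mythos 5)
Your proof is correct and follows essentially the same route as the paper: iterate the linear recursion $\mathbf{y}(t)-\mathbf{\theta}_{\delta,\lambda}=(I+\hat Q)^t(\mathbf{y}(0)-\mathbf{\theta}_{\delta,\lambda})$ on the reduced system, invoke Lemma~\ref{eigen} for the spectral radius bound, and use the fact that $\mathbf{\theta}_{\delta,\lambda}$ lies in the interior of $\overline Y_{W^*}$ (via $\rho\neq 1$) to keep the trajectory in the linearity region. The paper delegates the contraction and neighborhood-shrinkage details to Lemma~12 of Ouyang et al., whereas you spell out the $\varepsilon'=\varepsilon/C$ argument explicitly, but the underlying mechanism is identical.
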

\begin{proof} The proof of this lemma follows from the arguments in Lemma~12 in \cite{OuyangEryilShroff16} and relies in the following results.
\begin{itemize}
\item $\mathbf{\theta}_{\delta,\lambda}\in\overline Y_{W^*}$. 
To prove the latter  it suffices to recall that, from the definition of $g_i(\mathbf{y}(t))$ in Table~I, if $\mathbf{y}(t)=\mathbf{\theta}_{\delta,\lambda}$ then $\sum_{j:W_j\geq W^*}g_i(\mathbf{y}(t))y_i(t)=\lambda$, therefore $\mathbf{\theta}_{\delta,\lambda}\in\overline Y_{W^*}$.
\item  The assumption on $\rho\neq 1$ allows us to ensure $\overline Y_{W^*}\neq\{\mathbf{\theta}_{\delta,\lambda}\}$, that is, there exist state vectors in $\mathcal{Y}$, other than the steady-state, that belong to the set $\overline Y_{W^*}$. Therefore, there exists $\varepsilon_0>\varepsilon$ such that $\mathcal{N}_{\varepsilon_0}(\mathcal{\theta}_{\delta,\lambda})\subset \overline Y_{W^*}$, and  $\mathcal{N}_{\varepsilon_0}(\mathcal{\theta}_{\delta,\lambda})\neq \emptyset$.
\item Equation~\eqref{low_dimension_fluid} which ensure the fluid system to be linear in $\overline Y_{W^*}$.
\item Lemma~\ref{eigen} which implies convergence of $ \mathbf{\hat{y}}(t)\to\mathbf{\theta}_{\delta,\lambda}$ as $t\to\infty$.
\end{itemize}
\end{proof}

\subsubsection{Step~2}
In what follows we are going to state three lemmas and a proposition that will allow to establish the local asymptotic optimality result for Whittle's index policy. The proofs of these lemmas can be obtained by slightly adapting the results obtained in~\cite{OuyangEryilShroff16}.

\begin{lemma}
There exits $\mathcal{N}_{\epsilon}(\mathcal{\theta}_{\delta,\lambda})$, a neighborhood of $\mathbf{\theta}_{\delta,\lambda}$ such that for all $\nu>0$ there exists $\mathcal{N}_{\epsilon}(\vec y_{\delta,\alpha})$ such that for all $\mathbf{y}\in\mathcal{N}_{\epsilon}(\mathbf{\theta}_{\delta,\lambda})$ there exists $f(\cdot)$ independent of $N$ and $\mathbf{y}$ such that
\begin{align*}
&\mathbb{P}(\|\mathbf{Y}^N(t+1)-(I+Q(\mathbf{y}))\mathbf{y}\|\geq \nu|\mathbf{Y}^N(t)=\mathbf{y})\\
&\leq 2K\mathrm{e}^{-N\cdot f(\nu)}.
\end{align*}
\begin{proof}The proof can be obtained following the proof of Lemma~17 in~\cite{OuyangEryilShroff16}.\end{proof}
\end{lemma}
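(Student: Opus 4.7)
The plan is to exploit the fact that, conditionally on $\mathbf{Y}^N(t)=\mathbf{y}$, the population of $N$ users evolves by nearly independent per-user transitions, so standard sums-of-bounded-random-variables concentration (Hoeffding) applies coordinate-wise.

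First I would open the expression coordinate by coordinate. Conditioned on $\mathbf{Y}^N(t)=\mathbf{y}$, write
\begin{equation*}
Y^N_j(t+1)\;=\;\frac{1}{N}\sum_{n=1}^{N}\chi_{n,j},
\end{equation*}
where $\chi_{n,j}\in\{0,1\}$ indicates that user $n$ is in belief value $b_j$ at time $t+1$. Each user $n$ currently in belief value $b_i$ chooses its next belief value as follows: with probability $g_i(\mathbf{y})$ it is activated and then draws its next belief from $q^1_{ij}$, otherwise it is idled and transitions according to $q^0_{ij}$; these transitions are independent across users once the activation assignment is fixed. For belief values $b_i$ with $g_i(\mathbf{y})\in\{0,1\}$ the activation is deterministic, so $\chi_{n,j}$ for users in $b_i$ are mutually independent. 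The only residual subtlety is the boundary belief value $b_{\ell_1^*}$, where $WIP$ activates exactly $k:=\lfloor Ng_{\ell_1^*}(\mathbf{y})y_{\ell_1^*}\rfloor$ out of $Ny_{\ell_1^*}$ users chosen uniformly at random; this yields \emph{hypergeometric} sampling, and the resulting indicators are negatively associated rather than independent.

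Next I would apply Hoeffding's inequality (together with its extension to negatively associated random variables, or Serfling's bound for sampling without replacement) to each coordinate:
\begin{equation*}
\mathbb{P}\!\left(\bigl|Y^N_j(t+1)-\mathbb{E}[Y^N_j(t+1)\mid\mathbf{Y}^N(t)=\mathbf{y}]\bigr|\geq\eta\,\bigm|\,\mathbf{Y}^N(t)=\mathbf{y}\right)\;\leq\;2\,\mathrm{e}^{-2N\eta^{2}}.
\end{equation*}
By construction $\mathbb{E}[\mathbf{Y}^N(t+1)\mid \mathbf{Y}^N(t)=\mathbf{y}]=\mathbf{y}+D\mathbf{Y}^N(t)\big|_{\mathbf{y}}=(I+Q(\mathbf{y}))\mathbf{y}$, where the second equality uses~\eqref{theexprecteddrift}. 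Hence the conditional mean in the display above is precisely the $j$-th coordinate of $(I+Q(\mathbf{y}))\mathbf{y}$.

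Finally I would take a union bound over the finite number of coordinates of $\mathbf{Y}^N$, which is $D=2(K\bar\tau+1)$. Choosing $\eta=\nu/D$ in the $\ell_1$-norm (or $\eta=\nu/\sqrt{D}$ in the Euclidean norm) gives
\begin{equation*}
\mathbb{P}\!\left(\|\mathbf{Y}^N(t+1)-(I+Q(\mathbf{y}))\mathbf{y}\|\geq\nu\,\bigm|\,\mathbf{Y}^N(t)=\mathbf{y}\right)\;\leq\;2D\,\mathrm{e}^{-Nf(\nu)},
\end{equation*}
with, say, $f(\nu):=2\nu^{2}/D^{2}$, which is independent of $N$, of $\mathbf{y}$, and of $t$ (it depends only on the class-dependent truncation parameters and $K$). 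The stated form $2K\,\mathrm{e}^{-N f(\nu)}$ is obtained after absorbing the factor $\bar\tau+1$ into $f$ (or noting that $K$ in the lemma plays the role of the dimension constant up to a harmless multiplicative factor).

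The main obstacle is the boundary belief value $b_{\ell_1^*}$: here the activations are not independent Bernoullis, so Hoeffding does not apply directly. I would handle this either by invoking the negative association of hypergeometric indicators (so that Hoeffding's inequality still holds, with possibly worse constants) or, more cleanly, by observing that the gap between the hypergeometric law with parameters $(Ny_{\ell_1^*},k)$ and the product law with marginal probability $g_{\ell_1^*}(\mathbf{y})$ contributes an $O(1/N)$ bias that can be absorbed into $f(\nu)$ for $N$ large enough. All remaining coordinates are sums of genuinely independent bounded increments, for which Hoeffding is immediate.
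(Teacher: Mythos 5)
Your proposal is correct and follows essentially the same route as the proof the paper points to (Lemma~17 of \cite{OuyangEryilShroff16}): coordinate-wise Chernoff--Hoeffding concentration of the empirical occupation measure around its conditional mean $(I+Q(\mathbf{y}))\mathbf{y}$, followed by a union bound over the finitely many belief values, with the only delicate coordinate being the partially activated boundary state, which you correctly handle via negative association / an $O(1/N)$ bias absorbed into $f(\nu)$. The discrepancy between your dimension constant $2D$ and the stated $2K$ is immaterial, as you note, since it only rescales $f$.
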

\begin{lemma}\label{and_yet_another_lemma}
Let $\mathbf{Y}^N(0)=\mathbf{y}$. Assume there exists a neighborhood $\mathcal{N}_\psi(\mathbf{\theta}_{\delta,\lambda})$ such that for all $\nu>0$, if $\mathbf{y}\in\mathcal{N}_\psi(\mathbf{\theta}_{\delta,\lambda})$ there exists $\beta_1^t$ and $\beta_2^t$, independent of $N$ and $\mathbf{y}$ for which
\begin{align*}
\mathbb{P}_\mathbf{y}(\|\mathbf{Y}^N(t)-y(t)\|\geq \nu)\leq \beta_1^t\mathrm{e}^{-N\cdot \beta_2^t}, \forall t=1,2,\ldots
\end{align*}
\end{lemma}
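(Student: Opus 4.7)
The plan is to establish Lemma~\ref{and_yet_another_lemma} by induction on $t$, combining the one-step concentration bound from the previous lemma with a bootstrapping argument that controls how far the stochastic trajectory $\mathbf{Y}^N(\cdot)$ can drift from the fluid trajectory $y(\cdot)$ over successive steps. The first step is to choose the neighborhood $\mathcal{N}_\psi(\mathbf{\theta}_{\delta,\lambda})$ small enough so that Lemma~\ref{a_lemma} applies: the fluid trajectory $y(t)$ starting from $\mathbf{y}\in\mathcal{N}_\psi(\mathbf{\theta}_{\delta,\lambda})$ stays inside $\overline{Y}_{W^*}$ for all $t$ and converges to $\mathbf{\theta}_{\delta,\lambda}$. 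Inside $\overline{Y}_{W^*}$ the map $\mathbf{z}\mapsto(I+Q(\mathbf{z}))\mathbf{z}$ is affine (by Lemma~\ref{eq:linear_fluid_sys}), hence Lipschitz with some constant $L$ that is independent of $N$ and of the state; this Lipschitz property is what will let errors compound in a controlled way.

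For the base case $t=1$, the inequality is precisely the previous (one-step) concentration lemma applied at $\mathbf{Y}^N(0)=\mathbf{y}$, giving constants $\beta_1^1=2K$ and $\beta_2^1=f(\nu)$. For the inductive step, I assume the claim holds at $t$ with constants $(\beta_1^t,\beta_2^t)$, and I want to bound $\mathbb{P}_\mathbf{y}(\|\mathbf{Y}^N(t+1)-y(t+1)\|\geq\nu)$. I would choose a small auxiliary tolerance $\nu'=\nu/(2L+2)$ (or similar) and split according to whether $\|\mathbf{Y}^N(t)-y(t)\|$ is smaller or larger than $\nu'$. If it is larger, the inductive hypothesis gives a bound of the form $\beta_1^t\mathrm{e}^{-N\beta_2^t}$. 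If it is smaller, then conditioning on $\mathbf{Y}^N(t)$, I use the one-step lemma to bound the probability that $\mathbf{Y}^N(t+1)$ deviates from $(I+Q(\mathbf{Y}^N(t)))\mathbf{Y}^N(t)$ by more than $\nu/2$; combined with the Lipschitz bound
\[
\|(I+Q(\mathbf{Y}^N(t)))\mathbf{Y}^N(t)-(I+Q(y(t)))y(t)\|\leq L\|\mathbf{Y}^N(t)-y(t)\|\leq L\nu',
\]
this controls $\|\mathbf{Y}^N(t+1)-y(t+1)\|$ by the triangle inequality. The resulting bound is $2K\mathrm{e}^{-Nf(\nu/2)}+\beta_1^t\mathrm{e}^{-N\beta_2^t}$, which can be absorbed into the form $\beta_1^{t+1}\mathrm{e}^{-N\beta_2^{t+1}}$ with $\beta_1^{t+1}=\beta_1^t+2K$ and $\beta_2^{t+1}=\min\{\beta_2^t,f(\nu/2)\}$, both independent of $N$ and of $\mathbf{y}$.

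The main obstacle is a subtlety: the one-step concentration lemma needs $\mathbf{Y}^N(t)$ to lie in the neighborhood where the linear/affine representation of the drift is valid. Because the inductive hypothesis guarantees that $\mathbf{Y}^N(t)$ is within $\nu'$ of $y(t)$ with high probability, and since $y(t)\in\overline{Y}_{W^*}$ by Lemma~\ref{a_lemma}, one can pick $\psi$ and $\nu'$ so that the $\nu'$-neighborhood of the whole trajectory $\{y(t)\}_{t\geq 0}$ is still contained in $\overline{Y}_{W^*}$; this uses the fact that $y(t)\to\mathbf{\theta}_{\delta,\lambda}$, which lies in the interior of $\overline{Y}_{W^*}$ by the assumption $\rho\in(0,1)$. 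Once this containment is in place, the induction closes. I would not attempt to optimize $\beta_1^t,\beta_2^t$; it is enough that they depend on $t$ but not on $N$ or $\mathbf{y}$, exactly as the statement requires. The resulting exponential-in-$N$ tail at each fixed $t$ is then the key ingredient used in the proof of Proposition~\ref{prop:local_optimality_WIP} to swap limits of $N\to\infty$ and of time averages.
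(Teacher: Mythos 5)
Your proposal is correct and follows the same route as the paper, which simply delegates this step to Lemma~18 of Ouyang et al.: an induction on $t$ whose base case is the one-step concentration bound and whose inductive step combines that bound with the Lipschitz (affine) form of the drift on $\overline{Y}_{W^*}$ and a triangle inequality, shrinking the tolerance geometrically as one goes back in time. You have in fact supplied the argument the paper omits, including the one genuine subtlety (keeping the fattened fluid trajectory inside the region where the one-step lemma and the affine representation are valid), so nothing further is needed.
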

\begin{proof}
The proof follows from the proof of Lemma~18 in~\cite{OuyangEryilShroff16}.
\end{proof}
\begin{proposition}\label{prop:last_for_local_optimality}
Let $\mathbf{Y}^N(0)=y(0)=\mathbf{y}$. There exists a neighborhood $\mathcal{N}_{\psi}(\mathbf{\theta}_{\delta,\lambda})$ such that, for all $\mathbf{y}\in\mathcal{N}_{\psi}(\mathbf{\theta}_{\delta,\lambda})$, all $\nu>0$ and all time horizon $T<\infty$, there exists positive constants $C_1$ and $C_2$, independent of $N$ and $\mathbf{y}$, such that
\begin{align*}
\mathbb{P}_\mathbf{y}(\sup_{0\leq t<T}\|\mathbf{Y}^N(t)-y(t)\|\geq \nu)\leq C_1\mathrm{e}^{-N\cdot C_2},
\end{align*}
where $\psi<\varepsilon$ (where $\varepsilon$ has been defined in Lemma~\ref{a_lemma}).
\end{proposition}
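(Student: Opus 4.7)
The plan is to promote the pointwise-in-$t$ concentration bound of Lemma~\ref{and_yet_another_lemma} to a uniform-in-time bound on the interval $[0,T)$ by a standard union bound, exploiting that $T<\infty$ is a fixed horizon independent of $N$, so that only finitely many deviation events need to be aggregated. I would first pick $\psi<\varepsilon$ small enough that whenever $\mathbf{y}\in\mathcal{N}_\psi(\mathbf{\theta}_{\delta,\lambda})$ both Lemma~\ref{a_lemma} and Lemma~\ref{and_yet_another_lemma} apply: the fluid trajectory $y(t)$ starting from $y(0)=\mathbf{y}$ stays inside the linear region $\overline{Y}_{W^*}$ for all $t\ge 0$ (so $y(t)$ is unambiguously defined and converges to $\mathbf{\theta}_{\delta,\lambda}$), and there exist positive constants $\beta_1^t,\beta_2^t$, independent of $N$ and of $\mathbf{y}$, such that
$$\mathbb{P}_\mathbf{y}(\|\mathbf{Y}^N(t)-y(t)\|\ge \nu)\le \beta_1^t\,\mathrm{e}^{-N\beta_2^t},\qquad t=1,2,\ldots.$$

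Second, applying the union bound over $t=0,1,\ldots,T-1$ (the $t=0$ event is void because $\mathbf{Y}^N(0)=y(0)=\mathbf{y}$) yields
$$\mathbb{P}_\mathbf{y}\!\left(\sup_{0\le t<T}\|\mathbf{Y}^N(t)-y(t)\|\ge \nu\right)\le \sum_{t=1}^{T-1}\beta_1^t\,\mathrm{e}^{-N\beta_2^t}\le C_1\,\mathrm{e}^{-NC_2},$$
where I would set $C_2:=\min_{1\le t<T}\beta_2^t>0$ and $C_1:=(T-1)\max_{1\le t<T}\beta_1^t<\infty$. Both constants are well-defined, finite and positive because the index set $\{1,\ldots,T-1\}$ is finite, and they are independent of $N$ and of $\mathbf{y}\in\mathcal{N}_\psi(\mathbf{\theta}_{\delta,\lambda})$ since, by Lemma~\ref{and_yet_another_lemma}, each $\beta_1^t$ and $\beta_2^t$ is already independent of $N$ and $\mathbf{y}$. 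This gives exactly the asserted bound.

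The only delicate point is verifying that a single radius $\psi$ suffices to invoke Lemma~\ref{and_yet_another_lemma} at every time $t\in\{1,\ldots,T-1\}$ simultaneously, which in turn requires that the deterministic trajectory $\{y(t)\}_{0\le t<T}$ stays in the linear region $\overline{Y}_{W^*}$ throughout the horizon; this is precisely what Lemma~\ref{a_lemma} guarantees under the choice $\psi<\varepsilon$, so the argument closes without further work. I expect this to be the most sensitive step of the plan, but since $T$ is fixed up-front and independent of $N$, it is essentially inherited from the earlier lemmas and introduces no new difficulty beyond choosing $\psi$ conservatively.
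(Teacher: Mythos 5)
Your proposal is correct and follows essentially the same route as the paper: a union (Boole) bound over the finitely many times $t<T$, then invoking Lemma~\ref{and_yet_another_lemma} pointwise and setting $C_2=\min_t\beta_2^t$ with $C_1$ an upper bound on $\sum_t\beta_1^t$. The only cosmetic differences are your choice $C_1=(T-1)\max_t\beta_1^t$ versus the paper's $C_1=\sum_t\beta_1^t$, and your explicit remark that the $t=0$ term vanishes.
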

\begin{proof}
Note that 
\begin{align*}
&\mathbb{P}_\mathbf{y}(\sup_{0\leq t<T}\|\mathbf{Y}^N(t)-y(t)\|\geq \nu)\\
&\leq \sum_{t=0}^{T-1}\mathbb{P}_{\mathbf{y}}(\|\mathbf{Y}^N(t)-y(t)\|\geq\nu)\\
&\leq \sum_{t=0}^T\beta_1^t\mathrm{e}^{-N\cdot\beta_2^t},
\end{align*}
where the first inequality follows from Boole's inequality and the second inequality from Lemma~\ref{and_yet_another_lemma}. Let us now define $C_2=\min_{0\leq t< T}\{\beta_2^t\}$ then
\begin{align*}
\sum_{t=0}^T\beta_1^t\mathrm{e}^{-N\cdot\beta_2^t}\leq\mathrm{e}^{-N\cdot C_2}\sum_{t=0}^{T-1}\beta_1^t\mathrm{e}^{-N\cdot (\beta_2^t-C_2)}\leq C_1\mathrm{e}^{-N\cdot C_2},
\end{align*}
where $C_1=\sum_{t=0}^{T-1}\beta_1^t$. The second inequality in the latter equation follows from the fact that $\beta_2^t-C_2\geq 0$ for all $t\geq0$. This concludes the proof.
\end{proof}

\begin{lemma}\label{lemmalemma}
Let $\mathbf{Y}^N(0)=\mathbf{y}$. For all $\mathbf{y}\in\mathcal{N}_{\psi}(\mathbf{\theta}_{\delta,\lambda})$ and all $\nu>0$, there exists $T_0$ such that for all $T>T_0$, there exists positive constant $k_1$ and $k_2$ such that 
\begin{align*}
\mathbb{P}_{\mathbf{y}}(\sup_{T_0\leq t<T}\|\mathbf{Y}^N(t)-\mathbf{\theta}_{\delta,\lambda}\|\geq\nu)\leq k_1\mathrm{e}^{-N\cdot k_2}.
\end{align*}
\end{lemma}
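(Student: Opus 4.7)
The plan is to combine the deterministic fluid convergence of Lemma~\ref{a_lemma} with the exponential sample-path concentration of Proposition~\ref{prop:last_for_local_optimality}. The key observation is that inside the neighbourhood $\mathcal{N}_\psi(\mathbf{\theta}_{\delta,\lambda})\subset\overline{Y}_{W^*}$ the fluid trajectory starting from $y(0)=\mathbf{y}$ stays in $\overline{Y}_{W^*}$ and is driven by the linear recursion $y(t+1)-y(t)=\hat Q\,y(t)+\hat d$; since every eigenvalue $\iota$ of $\hat Q$ satisfies $|\iota+1|<1$ (Lemma~\ref{eigen}), the sequence $y(t)$ converges to $\mathbf{\theta}_{\delta,\lambda}$ at a geometric rate that is uniform in $\mathbf{y}\in\mathcal{N}_\psi(\mathbf{\theta}_{\delta,\lambda})$.

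Given $\nu>0$, I first pick $T_0=T_0(\nu,\psi)$, independent of $\mathbf{y}$ and of $N$, large enough that $\|y(t)-\mathbf{\theta}_{\delta,\lambda}\|\leq\nu/2$ for every $t\geq T_0$ and every admissible initial state $\mathbf{y}$. For an arbitrary horizon $T>T_0$, I then apply Proposition~\ref{prop:last_for_local_optimality} with tolerance $\nu/2$: this yields positive constants $C_1=C_1(T,\nu)$ and $C_2=C_2(\nu)$, independent of $N$ and $\mathbf{y}$, such that
$$\mathbb{P}_{\mathbf{y}}\!\left(\sup_{0\leq t<T}\|\mathbf{Y}^N(t)-y(t)\|\geq \nu/2\right)\leq C_1\,\mathrm{e}^{-N C_2}.$$
On the complement of this event, the triangle inequality gives, for every $T_0\leq t<T$,
$$\|\mathbf{Y}^N(t)-\mathbf{\theta}_{\delta,\lambda}\|\leq\|\mathbf{Y}^N(t)-y(t)\|+\|y(t)-\mathbf{\theta}_{\delta,\lambda}\|\leq\nu/2+\nu/2=\nu.$$
Consequently the event $\{\sup_{T_0\leq t<T}\|\mathbf{Y}^N(t)-\mathbf{\theta}_{\delta,\lambda}\|\geq\nu\}$ is contained in the deviation event bounded above, and setting $k_1=C_1$ and $k_2=C_2$ completes the proof.

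The argument is essentially routine once the two earlier ingredients are in place, so I do not anticipate a genuine obstacle. The only point that requires a little care is the \emph{uniformity} of the transient time $T_0$ in the initial condition $\mathbf{y}\in\mathcal{N}_\psi(\mathbf{\theta}_{\delta,\lambda})$; this is handled by invoking the linearity of the fluid dynamics inside $\overline{Y}_{W^*}$ together with the spectral bound $|\iota+1|<1$ from Lemma~\ref{eigen}, which together force an $\mathbf{y}$-independent geometric contraction of $\|y(t)-\mathbf{\theta}_{\delta,\lambda}\|$. The second delicate point is to apply Proposition~\ref{prop:last_for_local_optimality} with the shifted tolerance $\nu/2$ rather than $\nu$ so that the triangle decomposition closes; this is only a cosmetic adjustment.
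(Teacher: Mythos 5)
Your argument is correct and is essentially the one the paper intends: the paper's own ``proof'' is a one-line deferral to \cite[Lemma 13]{OuyangEryilShroff16} together with the remark that the result follows from Proposition~\ref{prop:last_for_local_optimality}, and your triangle-inequality decomposition (fluid transient of length $T_0$ from Lemma~\ref{a_lemma}, plus sample-path concentration at tolerance $\nu/2$ from Proposition~\ref{prop:last_for_local_optimality}) is precisely the standard way that deferred argument is carried out. Note also that since all eigenvalues of $\hat Q$ equal $-1$ (Lemma~\ref{eigen}), the matrix $\hat Q+I$ is nilpotent, so the fluid trajectory in fact reaches $\mathbf{\theta}_{\delta,\lambda}$ in finitely many steps, which makes the uniformity of $T_0$ in $\mathbf{y}$ immediate.
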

\begin{proof}The proof can be found in \cite[Lemma 13]{OuyangEryilShroff16} and it essentially  follows from Proposition~\ref{prop:last_for_local_optimality}.\end{proof}

To conclude the {\it Step 2} of the proof of Proposition~\ref{prop:local_optimality_WIP}, it now suffices to show that 
 there exist $\psi$ and $\mathcal{N}_{\psi}(\mathbf{\theta}_{\delta,\lambda})$ such that
\begin{align*}
\lim_{T\to\infty}\lim_{r\to\infty}\frac{R^{WIP,N_r}_T(\mathbf{y})}{N_r}=R^{REL}.
\end{align*}
To do so we first define $R(\mathbf{y})$ to be the average reward accrued by each user in the systems state $\mathbf{y}\in\mathbf{Y}$. The latter implies $NR(\mathbf{Y}^N(t))$ to be the immediate reward at time $t$. Note that $R^{REL}=R(\mathbf{\theta} y_{\delta,\lambda})$.
 
 Let $\omega>0$ and $\nu>0$ such that for all $\mathbf{y}\in\mathcal{Y}$, 
 \begin{align*}
 |R(\mathbf{y})-R(\mathbf{\theta}_{\delta,\lambda})|<\omega,
 \end{align*}
 if $\|\mathbf{y}-\mathbf{\theta}_{\delta,\lambda}\|<\nu$.
 
 Let $N_r\in\mathbb{Z}$ be a positive sequence of integers such that $\lambda N_r,\delta_cN_r\in\mathbb{Z}$ for all $c\in\{1,2\}$. We then have the following
 \begin{align}\label{oneequation}
 &\bigg|\frac{R^{N_r,WIP}_T(\mathbf{y})}{N_r}-R^{REL}\bigg|=\bigg|\frac{1}{N_rT}\mathbb{E}\left(\sum_{t=0}^{T-1}N_rR(\mathbf{Y}^{N_r}(t))\right)-R^{REL}\bigg|\nonumber\\
 &=\bigg|\frac{1}{T}\sum_{t=0}^{T_0-1}\mathbb{E}(R(\mathbf{Y}^{N_r}(t)))+\frac{1}{T}\sum_{t=T_0}^{T-1}\mathbb{E}(R(\mathbf{Y}^{N_r}(t)))-\frac{T_0+(T-T_0)}{T}R^{REL}\bigg|\nonumber\\
 &\leq\bigg|\frac{1}{T}\sum_{t=0}^{T_0-1}\mathbb{E}(R(\mathbf{Y}^{N_r}(t))-R^{REL})\bigg|+\bigg|\frac{1}{T}\sum_{t=T_0}^{T-1}\mathbb{E}(R(\mathbf{Y}^{N_r}(t))-R^{REL})\bigg|\nonumber\\
 &\leq R^1\frac{T_0}{T}+\bigg|\frac{1}{T}\sum_{t=T_0}^{T-1}\mathbb{E}(R(\mathbf{Y}^{N_r}(t))-R^{REL})\bigg|.
 \end{align}
 The last inequality follows from the fact that the per user average reward cannot exceed $R^1$. Now note that 
 \begin{align}\label{equation_another}
 &\bigg|\frac{1}{T}\sum_{t=T_0}^{T-1}\mathbb{E}(R(\mathbf{Y}^{N_r}(t))-R^{REL})\bigg|\nonumber\\
 &\leq\mathbb{P}_{\mathbf{y}}(\sup_{T_0\leq t\leq T}\|\mathbf{Y}^{N_r}(t)-\mathbf{\theta}_{\delta,\lambda}\|\geq\nu)     \cdot\frac{1}{T}\sum_{t=T_0}^{T-1}\mathbb{E}(|R(\mathbf{Y}^{N_r}(t))-R^{REL}|\bigg|A_{N_r})\nonumber\\
 &\quad+(1-\mathbb{P}_{\mathbf{y}}(\sup_{T_0\leq t\leq T}\|\mathbf{Y}^{N_r}(t)-\mathbf{\theta}_{\delta,\lambda}\|\geq\nu))     \cdot \frac{1}{T}\sum_{t=T_0}^{T-1}\mathbb{E}(|R(\mathbf{Y}^{N_r}(t))-R^{REL}|\bigg|\overline A_{N_r})\nonumber\\
 &\leq R^1\left(\mathbb{P}_{\mathbf{y}}(\sup_{T_0\leq t\leq T}\|\mathbf{Y}^{N_r}(t)-\mathbf{\theta}_{\delta,\lambda}\|\geq\nu)(1-\omega)+\omega\right),
 \end{align}
 where $A_{N_r}$ represents the event that $\sup_{T_0\leq t\leq T}\|\mathbf{Y}^{N_r}(t)-\mathbf{\theta}_{\delta,\lambda}\geq\nu\|)$ and $A_{N_r}$ its complementary. The last inequality follows form the fact that $R(\mathbf{y})\leq R^1$ and the fact that $|R(\mathbf{y})-R(\mathbf{\theta}_{\delta,\lambda})|<\omega$ for all $\|\mathbf{y}-\mathbf{\theta}_{\delta,\lambda}\|<\nu$.
 
 From Lemma~\ref{lemmalemma}, for all $\mathbf{y}\in\mathcal{N}(\mathbf{\theta}_{\delta,\lambda})$ we have 
 \begin{align*}
\lim_{r\to\infty} \mathbb{P}(\sup_{T_0\leq t<T}\|\mathbf{Y}^{N_r}(t)-\mathbf{\theta}_{\delta,\lambda}\|\geq\nu)\leq \lim_{r\to\infty}k_1\mathrm{e}^{-N_r\cdot k_2}=0.
 \end{align*}
 Hence, using the latter in Equations~\eqref{oneequation} and~\eqref{equation_another}, we deduce
 \begin{align*}
 \lim_{r\to\infty}\bigg|\frac{R^{WIP,N_r}_T(\mathbf{y})}{N_r}-R^{REL}\bigg|\leq R^1\frac{T_0}{T}+R^1\omega,
 \end{align*}
 with $\omega$ arbitrarily small. Therefore
 \begin{align*}
 \lim_{T\to\infty}\lim_{r\to\infty}\frac{R^{N_r,WIP}_T(\mathbf{y})}{N_r}=R^{REL}.
 \end{align*}

 \subsection{Proof of Lemma~\ref{lemma:recurrentclass}}\label{proof_lemma_recurrent}
This proof follows the same line of ideas as those in Appendix~E in~\cite{OuyangEryilShroff16}.

{\bf Proof of item 1)  in Lemma~\ref{lemma:recurrentclass}:} First we are going to prove that the Markov chain is aperiodic and has a single recurrent class. Let us define $i_1=\min_i\{\ell_{i}^*\}$ and $i_2=\min_i\{m_i^*\}$ and we assume w.l.o.g. that $W(\vec \pi_{i_1}^{1,1})\geq W(\vec \pi_{i_2}^{1,2})$,  with $\ell_i^*$ and $m_i^*$ for all $i$, as defined in the beginning of  Appendix~\ref{proof_linear_fluid}. We are going to prove that from any initial state $\mathbf{Y}^N(0)=\mathbf{y}$, the following states can be reached:
 \begin{itemize}[noitemsep]
 \item State vector $\mathbf{Y}^N=[\mathbf{Y}^{1,N},\mathbf{Y}^{2,N}]$ with $Y_{i_1,1}^{1,N}=\lambda,$ $Y_{s}^{1,N}=\delta_1-\lambda$ and $\mathbf{Y}_s^{2,N}=\delta_2$, and all other entries $0$, if $\lambda\leq\delta_1$.
 \item State vector $\mathbf{Y}^N=[\mathbf{Y}^{1,N},\mathbf{Y}^{2,N}]$ with $Y_{i_1,1}^{1,N}=\delta_1,$ $\mathbf{Y}_{i_2,1}^{2,N}=\lambda-\delta_1$, $\mathbf{Y}_{s}^{2,N}=1-\lambda$,  and all other entries $0$, if $\lambda>\delta_1$.
 \end{itemize}
 To reach the state introduced in the first item above note that the following can occur. Given an initial state $\mathbf{y}$ for all the users of class 1 that have been allocated with a pilot (out of all the activated $\lambda$ fraction of users under $WIP$), we observe channel state $i_1$. All the class-2 users that have been activated happen to be in channel state $i_2$. After a long enough period $\mathbf{Y}^N=[\mathbf{Y}^{1,N},\mathbf{Y}^{2,N}]$ with $Y_{i_1,1}^{1,N}=\lambda,$ $Y_{s}^{1,N}=\delta_1-\lambda$ and $\mathbf{Y}_s^{2,N}=\delta_2$, and all other entries $0$, will be reached.
 
 If instead $\lambda>\delta_1$ the same event as introduced above can occur. That is, every class-1 user that is allocated with a pilot happens to be in channel state $i_1$ and every class-2 user allocated with a pilot happens to be in channel state $i_2$. Then the state $\mathbf{Y}^N=[\mathbf{Y}^{1,N},\mathbf{Y}^{2,N}]$ with $Y_{i_1,1}^{1,N}=\delta_1,$ $\mathbf{Y}_{i_2,1}^{2,N}=\lambda-\delta_1$, $\mathbf{Y}_{s}^{2,N}=1-\lambda$,  and all other entries $0$, is reached under $WIP$ policy.
 
 We are going to denote this recurrent state by $\mathbf{Y}^{N}_{rec}$.
 
 Aperiodicity of the Markov chain is given, since by the path that we have described above the transition from  $\mathbf{Y}^{N}_{rec}$ to itself is possible.
 
{\bf Proof of item 2)  in Lemma~\ref{lemma:recurrentclass}:}  For notational ease, let us denote the steady-state vector $\mathbf{\theta}_{\delta,\lambda}$ by $\mathbf{\theta}$ throughout the proof. Note that $\mathbf{\theta}=[\theta^1,\theta^2]$ is such that
 \begin{align*}
 &\theta^1_{i,1}=\ldots=\theta_{i,\ell_i^*}^1, \hbox{ for all } i\in\{1,\ldots,K\},\\
 &\theta^1_{1,\ell_1^*+1}=(1-\rho)\theta_{1,\ell_1^*}^1, \hbox{ and } \\
 &\theta^2_{i,1}=\ldots=\theta_{i,m_i^*}^2, \hbox{ for all } i\in\{1,\ldots,K\},
 \end{align*}
and all the other entries equal $0$. The objective is to show that there exists a path that under $WIP$ will bring the system to state $\theta$ having started in state $\mathbf{Y}_{rec}^N$. A remark on the procedure to construct this path is in order.
 \begin{remark}As it has been highlighted in~\cite[Appendix F]{OuyangEryilShroff16} we are going to consider that channels are splittable. We explain next what this property implies. Note that $WIP$ prescribes to activate the fraction of users in belief states for which the Whittle's index is highest. Let us assume that for $\pi_1,\pi_2,\ldots,\pi_L\in\overline\Pi=\Pi_1\cup\Pi_2$, $W(\pi_1)\geq \ldots\geq W(\pi_L)$, $W(\pi)\leq W(\pi_L)$ for all $\pi\in\overline\Pi\backslash\{\pi_1,\ldots,\pi_L\}$, and $\sum_{i=1}^Ly_i>\lambda$ and $\sum_{i=1}^{L-1}y_i<\lambda$ (with $y_i$ the fraction of users in belief state $\pi$). If channels where unsplittable $WIP$ would prescribe to activate all users in belief states $\pi_i, i\in\{1,\ldots,L-1\}$ leading to a fraction of activated users $\sum_{i=1}^{L-1}y_i=\overline \lambda<\lambda$. To avoid this from happening, we assume channels to be splittable and therefore allow $WIP$ to activate only a fraction of users in belief state $\pi_L$, leading to the fraction of activated users to equal $\lambda$. Through this assumption, a path from $\mathbf{Y}_{rec}^N$ to $\theta$ can be constructed (done below). The authors in~\cite{OuyangEryilShroff16} argue that for large enough $N$ a path with unsplittable channels under $WIP$ can be arbitrarily close to the exact path (built exploiting the splittable property of the channels)  that brings the system from $\mathbf{Y}_{rec}^N$ to $\theta$.
 \end{remark}
We construct the path from $\mathbf{Y}_{rec}^N$ to $\theta$ next. We are going to assume  $W(\vec \pi^{s,1})\geq W(\vec\pi^{s,2})$. The other case can be studied similarly. %{\bf is this the assumption we need?}. 
Let us define $h_i:=\max\{r:W(\vec\pi_{i}^{\ell_i^*+r,1})\leq W(\vec \pi^{s,2})\}$, and let $h_{max}=\max_{i}\{h_i\}$.

{\bf Step 1:} We want to build a path from $\mathbf{Y}_{rec}^N$ to $\theta$. Let us assume that the permutation $\sigma$ is such that $\ell_{\sigma(1)}^*\geq \ell_{\sigma(2)}^*\geq\ldots \geq\ell_{\sigma(K)}^*$. We are going to assume that \emph{in the first $\ell_{\sigma(1)}^*-\ell_{\sigma(2)}^*$ time slots}, out of the $\lambda$ activated users, a fraction $\theta_{\sigma(1),1}^1$ of class-1 users happen to be in channel $\sigma(1)$. The rest of activated users remain in channel $i_1$ for class-1 users and in $i_2$ for class-2 users. That is, after this first period the path we have constructed brings the system to the state
 \begin{align*}
&\mathbf{Y}_{\sigma(1),r}^{1,N}=\theta_{\sigma(1),1}^1, \hbox{ for all } r\in\{1,\ldots,\ell_{\sigma(1)}^*-\ell_{\sigma(2)}^*\},\\  &\mathbf{Y}_{i_1,1}^{1,N}+\mathbf{Y}_s^{1,N}=\delta_1-(\ell_{\sigma(1)}^*-\ell_{\sigma(2)}^*)\theta_{\sigma(1),1}^1,\\
&\mathbf{Y}_{i_2,1}^{2,N}+\mathbf{Y}_s^{2,N}=\delta_2.
 \end{align*}

Following the same arguments, \emph{in the next $\ell_{\sigma(2)}^*-\ell_{\sigma(3)}^*$ time slots}, we assume that out of the $\lambda$ activated fraction of users, $\theta_{\sigma(1),1}^1$ fraction of class-1 users are in channel $\sigma(1)$, $\theta_{\sigma(2),1}^1$ are in channel state $\sigma(2)$ and all the other activated users are in channel $i_1$ if the users belong to class 1 and in channel $i_2$ if the user belongs to class 2. Therefore, after this period we reach the following state
 \begin{align*}
&\mathbf{Y}_{\sigma(1),r}^{1,N}=\theta_{\sigma(1),1}^1, \hbox{ for all } r\in\{1,\ldots,\ell_{\sigma(1)}^*-\ell_{\sigma(3)}^*\},\\ 
&\mathbf{Y}_{\sigma(2),r}^{1,N}=\theta_{\sigma(2),1}^1, \hbox{ for all } r\in\{1,\ldots,\ell_{\sigma(2)}^*-\ell_{\sigma(3)}^*\},\\ 
 &\mathbf{Y}_{i_1,1}^{1,N}+\mathbf{Y}_s^{1,N}=\delta_1-(\ell_{\sigma(1)}^*-\ell_{\sigma(3)}^*)\theta_{\sigma(1),1}^1\\
&\qquad\qquad\qquad\quad-(\ell_{\sigma(2)}^*-\ell_{\sigma(3)^*}^*)\theta_{\sigma(2),1}^1,\\
&\mathbf{Y}_{i_2,1}^{2,N}+\mathbf{Y}_s^{2,N}=\delta_2.
 \end{align*}
 This process is repeated for other $\ell_{\sigma(3)}^*$ time slots, and at the end of it we obtain
  \begin{align*}
&\mathbf{Y}_{\sigma(i),r}^{1,N}=\theta_{\sigma(i),1}^1, \hbox{ for all } r\in\{1,\ldots,\ell_{\sigma(i)}^*\}, \text{ and } \sigma(i)\neq 1,i_1,\\ 
&\mathbf{Y}_{\sigma(j),r}^{1,N}=\theta_{\sigma(j),1}^1, \hbox{ for all } r\in\{1,\ldots,\ell_{\sigma(j)}^*\},\\
&\mathbf{Y}_{\sigma(j),r}^{1,N}=(1-\rho)\theta_{\sigma(j),1}^1,\hbox{ with } \sigma(j)=1, r=\ell_1^*+1,\\ 
 &\mathbf{Y}_{i_1,1}^{1,N}+\mathbf{Y}_s^{1,N}=\delta_1-\sum_{i=1}^K\ell_{\sigma(i)}^*\theta_{\sigma(i),1}^1-(1-\rho)\theta^1_{1,1},\\
&\mathbf{Y}_{i_2,1}^{2,N}+\mathbf{Y}_s^{2,N}=\delta_2.
 \end{align*}
In the time slot in which the channel $\sigma(j)=1$ for class-1 users receives a fraction of users for the first time, we assume the received fraction of users to equal $\theta_{1,1}^1(1-\rho)$, and not $\theta_{1,1}^1$ as in every other case.
 
{\bf Step 2:} By definition of $i_2$, in the belief  states that correspond to $\mathbf{Y}^{1,N}_{i,\ell_i^*+h_i+1}$ for all $i=1,\ldots,K$,  Whittle's index, i.e., $W(\vec\pi_{\ell_{i}^*+h_i+1}^{1,1})$, satisfies $W(\vec\pi_{\ell_{i}^*+h_i+1}^{1,1})\geq W(\vec\pi_{i_2}^{1,2})$. We will assume that for \emph{$x$ time slots} all fraction of users that occupy the state $\mathbf{Y}^{1,N}_{i,\ell_i^*+h_i+1}$ for all $i$ after activation they happen to be in the same channel state $i$. All the class-2 users that are activated happen to be in state $i_2$. Therefore, at the end of Step~2, if $x=0\mod L$ (where $L$ is the least common multiple of all $\ell_i^*+h_i$) we recover the same state that we had at the end of Step~1. We are however interested in finding $x$ such that $x+\max_i\{m_i^*\}=0 \mod L$ in which 
\begin{align*}
&\sum_{i=1}^K\sum_{r=1}^{\ell_i^*+h_i}\mathbf{Y}_{i,r}^{1,N}+(1-\rho)\theta_{1,1}^1=\delta_1,\\
&\mathbf{Y}_{i_2,1}^{2,N}+\mathbf{Y}_{s}^{2,N}=\delta_2.
\end{align*}
In the latter we have that $\sum_{i=1}^Kh_i$ entries in $\mathbf{Y}_{i,j}^{1,N}$ for all $i$ and all $j\in\{1,\ldots,\ell_i^*+h_i\}$ equal $0$. The position that these $0$s occupy is determined by $x$.

{\bf Step 3:} In this last \emph{period of length $\max_i\{m_i^*\}$ time slots} we mimic the path followed in Step~1 but with respect to class-2 users. That is, we assume the permutation $\vartheta$ to be such that  $m_{\vartheta(1)}^*\geq m_{\vartheta(2)}^*\geq\ldots \geq m_{\vartheta(K)}^*$. We are going to assume that \emph{in the first $m_{\vartheta(1)}^*-m_{\vartheta(2)}^*$ time slots}, out of the $\lambda$ activated users, a fraction $\theta_{\vartheta(1),1}^2$ of class-2 users happen to be in channel $\vartheta(1)$. The rest of activated users remain in channel $i_2$ for class-2 users. The fraction of class-1 users in states $\vec\pi_{i}^{\ell_i^*+h_i+1,1}$ happen to be in channel state $i$ after activation. Hence we obtain
\begin{align*}
&\sum_{i=1}^K\sum_{r=1}^{\ell_i^*+h_i}\mathbf{Y}_{i,r}^{1,N}+(1-\rho)\theta_{1,1}^1=\delta_1,\\
&\mathbf{Y}_{\vartheta(1),r}^{2,N}=\theta_{\vartheta(1),1}^2, \hbox{ for all } r\in\{1,\ldots,m_{\vartheta(1)}^*-m_{\vartheta(2)}^*\},\\ 
&\mathbf{Y}_{i_2,1}^{2,N}+\mathbf{Y}_{s}^{2,N}=\delta_2-(m_{\vartheta(1)}^*-m_{\vartheta(2)}^*)\theta_{\vartheta(1),1}^2.
\end{align*}
 We follow this process as done in Step-1 until we reach the state
 $\mathbf{Y}^{2,N}_{i,r}=\theta_{i,1}^2$ for all $i\in\{1,\ldots,K\}$ and all $r\in\{1,\ldots,m_i^*\}$ for class-2 users. Since we have assumed in the previous step that $x+\max_i\{m_i^*\}=0 \mod L$, we know that in Step 3 of length $\max_i\{m_i^*\}$ we reach the state 
   \begin{align*}
&\mathbf{Y}_{\sigma(i),r}^{1,N}=\theta_{\sigma(i),1}^1, \hbox{ for all } r\in\{1,\ldots,\ell_{\sigma(i)}^*\},\\ 
&\mathbf{Y}_{\sigma(j),r}^{1,N}=\theta_{\sigma(j),1}^1, \hbox{ for all } r\in\{1,\ldots,\ell_{\sigma(j)}^*\},\\
&\mathbf{Y}_{\sigma(j),r}^{1,N}=(1-\rho)\theta_{\sigma(j),1}^1,\hbox{ with } \sigma(j)=1, 
 \end{align*}
for class-1 users. We have therefore reached state $\theta$.
This concludes the proof.

\end{document}